\documentclass[11pt]{article}
\usepackage[OT1, T1]{fontenc}
\usepackage{latexsym}
\usepackage{amssymb}
\usepackage{amsmath}
\usepackage{amsfonts}
\usepackage{textcomp}
\usepackage{amscd}
\usepackage{amsthm}
\usepackage{amsxtra}
\usepackage{stmaryrd}
\usepackage{fullpage}

\numberwithin{equation}{section}
\newtheorem{theorem}[equation]{Theorem}

\newtheorem{corollary}[equation]{Corollary}
\newtheorem{lemma}[equation]{Lemma}
\newtheorem{proposition}[equation]{Proposition}

\theoremstyle{definition}

\newtheorem{definition}[equation]{Definition}

\newtheorem{remark}[equation]{Remark}

\def\Z{{\mathbb {Z}}_p}
\def\Q{{\mathbb {Q}}_p}

\def\D{\mathrm{D}}

\def\ra{\rightarrow}
\def\ZZ{{\mathbb Z}}

\def\Qp{{\mathbb{Q}}_p}
\def\Cp{{\mathbb{C}}_p}
\def\Zp{{\mathbb{Z}}_p}

\def\OO{\mathcal{O}}

\def\aa{\mathbf{A}}
\def\bb{\mathbf{B}}
\def\ee{\mathbf{E}}
\def\int{\mathrm{int}}

\def\at{\widetilde{\mathbf{A}}}
\def\bt{\widetilde{\mathbf{B}}}
\def\et{\widetilde{\mathbf{E}}}

\def\etplus{\widetilde{\mathbf{E}}^+}

\def\calE{\mathcal{E}}
\def\calR{\mathcal{R}}
\def\Gal{\mathrm{Gal}}
\def\rig{\mathrm{rig}}

\def\D{\mathrm{D}}
\def\A{\mathcal{A}}
\DeclareMathOperator{\rank}{rank}

\def\m{(\varphi,\Gamma)}

\def\r{\mathcal{R}}
\newcommand{\btdag}[1]{\widetilde{\mathbf{B}}^{\dagger #1}}
\newcommand{\atdag}[1]{\widetilde{\mathbf{A}}^{\dagger #1}}
\newcommand{\bdag}[1]{\mathbf{B}^{\dagger #1}}
\newcommand{\adag}[1]{\mathbf{A}^{\dagger #1}}

\begin{document}
\title{On Families of $\m$-modules}
\author{Kiran Kedlaya\\Massachusetts Institute of Technology\\ kedlaya@mit.edu\\  \\
Ruochuan Liu\\ University of Michigan, Ann Arbor\\ ruochuan@umich.edu}

\maketitle
\begin{abstract}
Berger and Colmez introduced a theory of families of overconvergent \'etale $\m$-modules associated to families of $p$-adic Galois representations over $p$-adic Banach algebras. However, in contrast with the classical theory of $\m$-modules, the functor they obtain is not an equivalence of categories. In this paper, we prove that when the base is an affinoid space, every family of (overconvergent) \'etale $\m$-modules can locally be converted into a family of $p$-adic representations in a unique manner, providing the ``local'' equivalence. There
is a global mod $p$ obstruction related to the moduli of residual representations.
\end{abstract}

\maketitle

\section*{Introduction}
In \cite{BC07}, Berger and Colmez introduced a theory of families of overconvergent \'etale $\m$-modules associated to families of $p$-adic Galois representations over $p$-adic Banach algebras. The $p$-adic families of local Galois representations emerging from number theory are usually over rigid analytic spaces.  So we are mainly interested in the case where the bases are reduced affinoid spaces. However, even in this case the functor of Berger-Colmez is far from an equivalence of categories, in contrast with the classical theory of $\m$-modules. This was first noticed by Chenevier
\cite[Remarque~4.2.10]{BC07}: if the base is the $p$-adic unit circle $M(\Q \langle X,Y\rangle/(XY-1))$, then it is easy to see that the free rank $1$ overconvergent \'etale $\m$-module $D$ with a basis $e$ such that $\varphi(e)=Ye$ and $\gamma(e)=e$ for $\gamma\in\Gamma$ does not come from a family of $p$-adic representations over the same base.

On the other hand, in his proof of the density of crystalline representations, Colmez proved \cite[Proposition 5.2]{C08} that for certain families of $\rank$ $2$ triangular \'etale $\m$-modules, one can locally convert
such a family into a family of $p$-adic representations using his theory of \emph{Espaces Vectoriels de dimension finie} (it is clear that we can also convert Chenevier's example locally).
Moreover, Colmez remarked \cite[Remarque~5.3(2)]{C08} that: \emph{On aurait pu aussi utiliser une version \guillemotleft en famille\guillemotright\ des th\'eor\`emes \`a la Dieudonn\'e-Manin de Kedlaya. Il y a d'ailleurs une concordance assez frappante entre ce que permettent de d\'emontrer ces th\'eor\`emes de Kedlaya et la th\'eorie des Espaces Vectoriels de dimension finie}.

Unfortunately, as noticed in \cite{R08}, there is no family version of Kedlaya's slope filtrations theorem in general, because the slope polygons of families of Frobenius modules are not necessarily locally constant.
Nonetheless, one may still ask to what extent one can convert a globally \'etale
family of $\m$-modules back into a Galois representation. As Chenevier's
example shows, this cannot be done in general over an affinoid base. The best
one can hope for in general is the following theorem, which extends a result of
Dee \cite{D01}. (In the statement, the distinction between a
$\m$-module and a family of $\m$-modules is that the former is defined
as a module over a ring, whereas the latter is defined as a coherent
sheaf over a rigid analytic space.)

\begin{theorem}\label{thm:A}
Let $S$ be a Banach algebra over $\Qp$ of the form $R \otimes_{\Zp} \Qp$,
where $R$ is a complete noetherian local domain of characteristic $0$
whose residue field is finite over $\mathbb{F}_p$. Then for any finite extension $K$ of
$\Qp$, the categories of $S$-linear representations of $G_K$, of
\'etale $\m$-modules over $\bb^\dagger_K \widehat{\otimes}_{\Qp} S$, and of
families of
\'etale $\m$-modules over $\bb^\dagger_{\rig,K} \widehat{\otimes}_{\Qp} S$
are all equivalent.
\end{theorem}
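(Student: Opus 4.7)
The plan is to chain three equivalences, exploiting the pro-finite nature of $R$: since $R$ is a complete noetherian local domain with residue field finite over $\F$, each quotient $R/\MM^n$ is a finite $\Zp$-algebra, and finitely generated $R$-modules are $\MM$-adic limits of finite $\Zp$-modules. The first step is to extend Dee's equivalence: at each level $R/\MM^n$, Fontaine's classical theory identifies continuous $R/\MM^n$-linear $G_K$-representations with \'etale $\m$-modules over $\aa_K \otimes_{\Zp} R/\MM^n$; taking inverse limits and then inverting $p$ yields the first desired equivalence between $S$-linear $G_K$-representations and \'etale $\m$-modules over $\bb_K \widehat{\otimes}_{\Qp} S$.

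Second, I would establish a family version of Cherbonnier-Colmez to pass to the overconvergent setting. At each level $R/\MM^n$, the representation is finite over $\Zp$ and hence overconvergent; the overconvergence radius in the classical argument depends only on the image of $G_K$ in $\mathrm{GL}(V)$, which is controlled by the residual representation modulo $\MM$ up to wild inertia. This uniform radius lets the $\MM$-adic descents assemble into a global descent from $\bb_K \widehat{\otimes}_{\Qp} S$ to $\bb^\dagger_K \widehat{\otimes}_{\Qp} S$, with uniqueness following from the injection $\bb^\dagger_K \hookrightarrow \bb_K$.

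Third, I would bridge $\bb^\dagger_K \widehat{\otimes}_{\Qp} S$ and $\bb^\dagger_{\rig,K} \widehat{\otimes}_{\Qp} S$. Base change handles one direction and is fully faithful; for essential surjectivity, given an \'etale family $D$ over $\bb^\dagger_{\rig,K} \widehat{\otimes}_{\Qp} S$, I would construct a $\m$-stable $\bb^\dagger_K \widehat{\otimes}_{\Qp} S$-lattice by working at the $R$-integral level: pick any $\m$-stable lattice, reduce modulo $\MM$ to obtain a $\m$-module over the classical Robba ring with coefficients in the finite field $R/\MM$, apply Kedlaya's slope filtration theorem (pure slope zero case) to obtain a canonical \'etale descent of the reduction, and lift this descent $\MM$-adically. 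A key feature---contrasting with the affinoid base of Chenevier's counterexample---is that the residual representation modulo $\MM$ is fixed, so the mod-$p$ obstruction noted in the abstract does not appear here.

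The main obstacle I anticipate is the $\MM$-adic assembly in steps two and three: uniform overconvergence radii for family Cherbonnier-Colmez, and simultaneous lifting of Frobenius-stable lattices in the Robba-ring descent. Both require sharp norm estimates in the relevant Banach and Fr\'echet algebras, and one must also verify that the resulting inverse-limit lattices are finitely generated over the intended ring rather than only over an abstract completion. The interplay between the non-noetherian Robba ring in families and the $\MM$-adic structure of $R$ is the technical heart, requiring a careful blend of Kedlaya-style slope theory with Dee-type limit arguments.
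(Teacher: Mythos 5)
Your overall architecture (chain the three categories together and exploit that $R$ has a single residual representation) has the right shape, but two of your three steps contain concrete gaps, and you miss the one observation on which the paper's proof actually turns. That observation is Proposition~4.2: for a local coefficient algebra $S = R\otimes_{\Zp}\Qp$, the $p$-adic completion of $S\widehat{\otimes}_{\Qp}\bb^\dagger_K$ is naturally identified with $(R\widehat{\otimes}_{\Zp}\aa_K)\otimes_{\Zp}\Qp$, which is exactly the ring appearing in Dee's theorem. Given an \'etale $\m$-module over $S\widehat{\otimes}_{\Qp}\bb^\dagger_K$, one therefore completes $p$-adically, applies Dee's equivalence to recover $V_S$, and checks (arguing as in \cite[Proposition~1.2.7]{K08}) that this $V_S$ is the $\varphi$-invariants of $M_S\otimes(S\widehat{\otimes}_{\Qp}\bt^\dagger)$; the quasi-inverse is the Berger--Colmez functor $\D^\dagger_K$, which is cited, not reproved. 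Your step~2 instead proposes to descend an abstract module from $\bb_K\widehat{\otimes}_{\Qp}S$ to $\bb^\dagger_K\widehat{\otimes}_{\Qp}S$ by a uniform-radius Cherbonnier--Colmez argument, with uniqueness ``following from the injection $\bb^\dagger_K\hookrightarrow\bb_K$.'' But the paper explicitly warns that $S\widehat{\otimes}_{\Qp}\bb^\dagger_K$ need \emph{not} embed into $S\widehat{\otimes}_{\Qp}\bb_K$, because the topologies used to form the two completed tensor products are incompatible; for local coefficient algebras this is repaired precisely by Proposition~4.2, which your sketch does not supply. Moreover, Cherbonnier--Colmez (and Berger--Colmez) descend $\D(V)$ for a given representation $V$, not an arbitrary \'etale module, so your step~2 presupposes part of what it is meant to prove.

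Your step~3 has a more basic defect: since $p\in\mathfrak{m}_R$ and $p$ is invertible in $S$, the ideal $\mathfrak{m}_R$ generates the unit ideal of $S\widehat{\otimes}_{\Qp}\bb^\dagger_{\rig,K}$, so ``pick any $\m$-stable lattice and reduce modulo $\mathfrak{m}$'' is not meaningful until one already has an $R$-integral (equivalently, \'etale) model in hand --- which is the object to be constructed; the argument is circular as stated. In the paper this issue does not arise because a family over $S\widehat{\otimes}_{\Qp}\bb^\dagger_{\rig,K}$ is \emph{defined} to be \'etale when it arises by base change from an \'etale module over $S\widehat{\otimes}_{\Qp}\bb^\dagger_K$ (Definition~6.3), so essential surjectivity in the third equivalence is automatic and the whole content is full faithfulness of that base change --- a point your proposal does not address at all. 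The paper proves it (Proposition~6.5) by embedding $S$ into a finite product of finite algebras over complete discretely valued fields (Lemma~6.4) and applying the slope-zero uniqueness from \cite{K05b} fibrewise; that, rather than an $\mathfrak{m}$-adic lifting of slope filtrations, is where Kedlaya's theory enters. Your step~1 (d\'evissage through the finite quotients $R/\mathfrak{m}_R^n$) is sound but amounts to re-deriving Dee's theorem, which can simply be cited.
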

For instance, if $S$ is an affinoid algebra and we are given an
\'etale $\m$-module
over $\bb^\dagger_{K} \widehat{\otimes}_{\Qp} S$, we recover a linear
representation over each residue disc of $S$ (and every affinoid subdomain of
such a disc), but these representations may not glue. This is what happens
in Chenevier's example, because the mod $p$ representations cannot be
uniformly
trivialized. In fact, the obstruction to converting a $\m$-module back
into a representation exists purely at the residual level;
it suggests a concrete realization of the somewhat
murky notion of ``moduli of residual (local) representations''.

By combining Theorem~\label{thm:A} with the results of \cite{R08}, we obtain a result
that applies when only one fibre of the $\m$-module is known to be \'etale.
(Beware that the natural analogue of this statement in which the
rigid analytic point $x$ is replaced by a Berkovich point is trivially false.)
\begin{theorem}\label{thm:B}
Let $S$ be an affinoid algebra over $\Q$, and let $M_S$ be a family of $\m$-modules over $\bb^\dagger_{\rig,K}
\widehat{\otimes}_{\Qp} S$.
If $M_x$ is \'etale for some $x\in M(S)$,
then there exists an affinoid neighborhood $M(B)$ of $x$ and
a $B$-linear representation $V_B$ of $G_K$ whose associated $\m$-module
is isomorphic to $M_S \widehat{\otimes}_S B$.
Moreover, $V_B$ is unique for this property.
\end{theorem}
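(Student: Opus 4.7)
The plan is to combine the openness of the étale locus (from \cite{R08}) with Theorem 0.1 at the formal completion at $x$, and then algebraize the resulting formal representation.

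The first step uses the results of \cite{R08}: for any family of Frobenius modules over $M(S)$, the maximal slope is upper semicontinuous on the base, and the same assertion applied to the dual gives lower semicontinuity of the minimal slope. Hence the étale locus---where both invariants vanish---is open in $M(S)$. Since $x$ lies in this locus, we shrink to an affinoid neighborhood $M(B_0)$ of $x$ on which $M_{B_0} := M_S \widehat{\otimes}_S B_0$ is étale at every rigid point. Next, take a formal model $B_0^\circ \subset B_0$ of power-bounded elements, let $\bar{x} \in \mathrm{Spec}(B_0^\circ/p)$ be the specialization of $x$, and set $R := \widehat{(B_0^\circ)}_{\bar{x}}$. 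Then $R$ is a complete noetherian local domain of characteristic $0$ whose residue field is finite over $\F_p$, so $S_R := R \otimes_{\Zp} \Qp$ satisfies the hypotheses of Theorem 0.1. Base-changing $M_{B_0}$ to $S_R$ yields an étale family of $\m$-modules over $\bb^\dagger_{\rig, K} \widehat{\otimes} S_R$, to which Theorem 0.1 attaches a free $S_R$-linear representation $V_R$ of $G_K$ of rank equal to the rank of $M_{B_0}$.

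The main obstacle is to algebraize $V_R$. The proposal is to construct a candidate $B$-module directly as $\varphi$-invariants: writing $D_B$ for the Berger--Colmez overconvergent $\m$-module attached to $M_B := M_{B_0} \widehat{\otimes}_{B_0} B$, form
\[ V_B := \left( \bt \widehat{\otimes} B \otimes_{\bb^\dagger_K \widehat{\otimes} B} D_B \right)^{\varphi = 1}. \]
This is canonically a $B$-module with a continuous $G_K$-action, and by the base-change compatibility inherent in Theorem 0.1, one has $V_B \widehat{\otimes}_B S_R \simeq V_R$. Since $V_R$ is free of rank $n$ and the map $B \to S_R$ is flat and captures the local behavior near $x$, a Nakayama-type argument applied at $x$, combined with the fiberwise étaleness on all of $M(B_0)$, shows that $V_B$ is locally free of rank $n$ in a neighborhood of $x$. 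Shrinking $B$ to such a neighborhood produces the required representation. Uniqueness is built into the canonical construction: if $V_B'$ is another such representation, its completion at $x$ must agree with $V_R$ by the uniqueness in Theorem 0.1, and hence $V_B \simeq V_B'$ on a neighborhood of $x$ by coherence.
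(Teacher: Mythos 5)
Your proposal has a genuine gap at its central step. You pass from ``$M_x$ is \'etale'' to ``$M_{B_0}$ is fibrewise \'etale on a neighborhood'' (via semicontinuity of slopes) and then assert that the base change to $S_R = R\otimes_{\Zp}\Qp$ ``yields an \'etale family of $\m$-modules,'' to which Theorem 0.1 can be applied. But in this paper ``\'etale family'' (Definitions 2.8 and 6.3) means the existence of an \emph{\'etale model}: a $\varphi$-stable integral lattice over $\OO_{S}\widehat{\otimes}_{\Zp}\aa^\dagger_K$ on which $\varphi$ is an isomorphism. Fibrewise \'etaleness does not produce such a lattice, over $B_0$ or over $S_R$ (note that $S_R$ has $p$ inverted, so there is no reduction mod $p$ of $M_{S_R}$ to deform from). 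Constructing this lattice is precisely the hard content of the paper's Theorem 7.4: one lifts the \'etale basis of the single fibre $M_x$ to the family and corrects it by the successive Frobenius approximation of Lemma 7.3 (a quantitative argument with the valuations $w_s$ and $v_{i,s}$), after shrinking the base using Lemma 7.1 to control the error. Your proof skips this entirely, so neither Theorem 0.1 over $S_R$ nor your definition of $V_B$ as $\varphi$-invariants of $D_B$ (whose existence presupposes the \'etale model) is available.

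There is a second, related problem in the algebraization step. Even granting an \'etale model $D_B$, the module $(\,(S\widehat{\otimes}\bt^\dagger)\otimes D_B)^{\varphi=1}$ need not be locally free of the right rank over an affinoid base: Chenevier's example is globally \'etale in the model sense, yet the $\varphi$-invariants over the whole circle vanish. The obstruction lives at the residual level, and the paper's actual mechanism for killing it is Theorem 5.2, which requires a basis on which $\varphi-1$ has positive $p$-adic valuation (i.e., trivial mod-$p$ reduction); Theorem 7.4 is engineered to output exactly such a basis. Your ``Nakayama-type argument'' would also need the unproven base-change compatibility $V_B\widehat{\otimes}_B S_R\simeq V_R$ (taking $\varphi$-invariants does not obviously commute with the completed base change $B\to S_R$) and the coherence of $V_B$ as a $B$-module. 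In short: the skeleton (localize, convert, compare with the complete local ring) resembles the paper's logic, but the two load-bearing ingredients --- the approximation lemma producing an \'etale model with residually trivial Frobenius, and the contraction argument of Theorem 5.2 converting it into a representation over the affinoid itself --- are missing, and what you substitute for them does not close the argument.
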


In \cite{LB06}, to prove the Fontaine-Colmez theorem, Berger constructed a morphism from the category of filtered $(\phi,N)$-modules to the category of $\m$-modules. It should be possible to generalize Berger's construction to families of filtered $(\phi, N)$-modules; upon doing so, one would get a family version of the Fontaine-Colmez theorem by the preceding theorem. That is, one would know
that a weakly admissible family of filtered $(\phi, N)$-modules over an affinoid base (with trivial
$\phi$-action on the base) becomes admissible in a neighborhood of
each rigid analytic point.

\subsection*{Acknowledgments}
The first author thanks Siegfried Bosch for helpful comments.
The second author thanks Pierre Colmez for the helpful conversation during
spring 2008
which inspired this project,
Laurent Berger for answering many questions patiently,
and Marie-France Vign\'eras for arranging his visit to
Institut de Math\'ematiques de Jussieu
during spring 2008. While writing this paper, the first author
was supported by NSF CAREER grant DMS-0545904,
the MIT NEC Research Support Fund,
and the MIT Cecil and Ida Green Career Development Professorship,
while the second author was a postdoctoral fellow of Foundation Sciences Math\'ematiques de Paris.

\section{Rings of $p$-adic Hodge theory}

We begin by introducing some of the rings used in $p$-adic Hodge theory.
This is solely to fix notation; we do not attempt to expose the constructions
in any detail. For that, see for instance \cite{B04}.
In what follows, whenever a ring is defined whose notation includes
a boldface $\aa$, the same notation with $\aa$ replaced by $\bb$
will indicate the result of inverting $p$.

Let $\Cp$ be a completed algebraic closure of $\Q$,
with valuation subring $\OO_{\Cp}$ and $p$-adic valuation $v_p$
normalized with $v_p(p) = 1$. Let $\overline{v}_p: \OO_{\Cp}/(p) \to
[0, 1) \cup \{+\infty\}$ be the semivaluation obtained by truncation.
Define $\etplus$ to be the ring of sequences
$(x_n)_{n=0}^\infty$ in $\OO_{\Cp}/(p)$ such that $x_{n+1}^p = x_n$
for all $n$. Define a function $v_{\ee}: \etplus
\to [0, +\infty]$ by
sending the zero sequence to $+\infty$, and sending each nonzero sequence $(x_n)$
to the common value of $p^n \overline{v}_p(x_n)$ for all $n$ with
$x_n \neq 0$. This gives a valuation under which
$\etplus$ is complete.
Moreover, if we put $\et =
\mathrm{Frac}(\etplus)$, and let
$\epsilon = (\epsilon_n)$ be an element of $\etplus$ with
$\epsilon_0 = 1$
and $\epsilon_1 \neq 1$, then $\et$ is a completed algebraic closure of
$\mathbb{F}_p((\epsilon - 1))$.

Let $\at$ be the $p$-typical Witt ring $W(\et)$,
which is the unique complete discrete valuation ring with maximal ideal $(p)$
and residue field $\et$.
For each positive integer $n$, $W(\et)/p^n W(\et)$ inherits a
topology from the valuation topology on $\et$, under which it is complete.
We call the inverse limit
of these the \emph{weak topology} on $\at$.
We similarly obtain a weak topology on $\bt$.

For any $n\geq0$, we let $\mu_{p^n}$ denote the set of $p^n$-th roots of unity in $\overline{\mathbb{Q}}_p$, and let $\mu_{p^\infty}=\cup_{n\geq0}\mu_{p^n}$. For $K$ a finite extension of $\Q$, let $K_\infty=K(\mu_{p^\infty}), H_K = \Gal(\overline{K}/K_\infty), \Gamma =\Gamma_K = \Gal(K_\infty/K)$ and $K_0'=\mathbb{Q}_p^{\mathrm{ur}}\cap K_\infty$.

Put $\pi = [\epsilon] - 1$, where brackets denote the Teichm\"uller lift.
Using the completeness of $\at$ for the weak topology, we may embed
$\Zp((\pi))$ into $\at$. Let $\aa$ be the $p$-adic completion of the
 integral closure of $\Zp((\pi))$ in $\at$, and put $\aa_K = \aa^{H_K}$.
These rings carry actions of $G_{K}$
which are continuous for the weak topology
on the rings and the profinite topology on $G_{K}$.
They also carry endomorphisms
 $\varphi$ (which are weakly and $p$-adically continuous)
induced by the Witt vector Frobenius on
$\at$.

For $s > 0$, the subset
\[
\at^{\dagger,s} = \{x \in \at: x = \sum_{k \in \ZZ} p^k [x_k],
v_{\et}(x_k) + \frac{psk}{p-1} \geq 0,
\lim_{k \to +\infty} v_{\et}(x_k) + \frac{psk}{p-1} = +\infty \}
\]
is a subring of $\at$
which is complete for the valuation
\[
w_s(x) = \inf_k \left\{v_{\et}(x_k) + \frac{psk}{p-1} \right\}.
\]
Put $\bt^\dagger = \cup_{s>0} \bt^{\dagger,s}$,
$\bb^{\dagger,s}_K = \bb_K \cap \bt^{\dagger,s}$,
$\bb_K^\dagger = \cup_{s>0} \bb^{\dagger,s}_K$,
$\aa_K^{\dagger,s} = \aa_K\cap \at^{\dagger,s}$, $\aa_K^\dagger = \aa \cap \bb_K^\dagger$.
(Beware that the latter ring
is strictly larger than $\cup_{s>0} \aa_K^{\dagger,s}$.)
These rings carry an action of $\varphi$; for $n$ a positive integer, write
\[
\adag{,s}_{K,n}=\varphi^{-n}(\adag{,p^n s}_K).
\]

Let $\bt^{\dagger,s}_{\rig}$
be the Fr\'echet completion of $\bt^{\dagger,s'}$
under the valuations $w_{s'}$ for all $s' \geq s$, and put
$\bt^{\dagger}_{\rig} = \cup_{s>0} \bt^{\dagger,s}_{\rig}$.
Similarly, let $\bb^{\dagger,s}_{\rig,K}$
be the Fr\'echet completion of $\bb_K^{\dagger,s'}$
under the valuations $w_{s'}$ for all $s' \geq s$,
and put $\bb^{\dagger}_{\rig,K} = \cup_{s>0} \bb^{\dagger,s}_{\rig,K}$.
It turns out that $(\bb^{\dagger,s}_{\rig})^{H_K} = \bb^{\dagger,s}_{\rig,K}$.

Some of these rings admit more explicit descriptions, as follows.
It turns out that $\bb_K$ is isomorphic to the
$p$-adic local field
\[
\calE_{K_0'}=\{f=\sum^{+\infty}_{i=-\infty}a_i T^i\mid a_i\in K_0',
\inf_i \{v_p(a_i)\} > -\infty,
\lim_{i \to -\infty} v_p(a_i) = +\infty\}
\]
with valuation $w(f)=\min_{i\in\mathbb{Z}}v_p(a_i)$ and imperfect
residue field $k'((T))$, where $k'$ is the residue field of $K_0'$.
There is no distinguished such isomorphism in general
(except for $K = \Qp$, where one may take $T = \pi$), but suppose we
fix a choice. Then $\bb$ corresponds to
the completion of the maximal unramified
extension of $\bb_K$.
For $s \gg 0$ (depending on $K$ and the choice of the isomorphism
$\bb_K \cong \calE_{K_0'}$), $\bb^{\dagger,s}_{K}$ corresponds to the subring
$\calE^s_{K_0'}$ of $\calE_{K'_0}$ defined as
\[
\calE^s_{K_0'} =\{f=\sum^{+\infty}_{i=-\infty}a_i T^i\mid a_i\in K_0',
\inf_i \{v_p(a_i)\} > -\infty, \lim_{i \to -\infty}
i + \frac{ps}{p-1} v_p(a_i) = +\infty\},
\]
i.e., the bounded  Laurent series in $T$ convergent
on the annulus $0 < v_p(T) \leq 1/s$.
Meanwhile, $\bb_{\rig,K}^{\dagger,s}$
corresponds to the ring
\[
\calR^s_{K_0'} =\{f=\sum^{+\infty}_{i=-\infty}a_i T^i\mid a_i\in K_0',
\lim_{i \to +\infty}
i + r v_p(a_i) = +\infty \,\, \forall r>0,
\lim_{i \to -\infty}
i + \frac{ps}{p-1} v_p(a_i) = +\infty\},
\]
i.e., the unbounded  Laurent series in $T$ convergent
on the annulus $0 < v_p(T) \leq 1/s$.
The union $\calR_{K_0'} = \cup_{s>0} \calR^s_{K_0'}$
is commonly called the \emph{Robba ring}
over $K'_0$.

\section{$p$-adic representations and $\m$-modules}

We next introduce $p$-adic representations and the objects
of semilinear algebra used to describe them.
Fix a finite extension $K$ of $\Qp$.
For $R$ a topological ring, we will mean by an \emph{$R$-linear representation}
a finite $R$-module equipped with a continuous linear action of $G_K$.
(We will apply additional adjectives like ``free'', which are to be
passed through to the underlying $R$-module.)
Fontaine \cite{F91} constructed a functor giving an equivalence of
categories between $\Q$-linear representations and certain linear (or rather
semilinear) algebraic data, as follows. (We may extend to $L$-linear
representations for finite extensions $L$ of $\Q$,
by restricting the coefficient field to $\Q$ and then keeping
track of the $L$-action separately.)

An \emph{\'etale $\varphi$-module} over $\aa_K$ is a finite
module $N$ over $\aa_K$,
equipped with a semilinear action of $\varphi$, such that the
induced $\aa_K$-linear map $\varphi^* N \to N$ induced by the $\varphi$-action
is an isomorphism.
An \emph{\'etale $\varphi$-module} over $\bb_K$ is a finite
module $M$ over $\bb_K$,
equipped with a semilinear action of $\varphi$, which contains
an $\aa_K$-lattice $N$ (i.e., a finite $\aa_K$-submodule such that the induced
map $N \otimes_{\aa_K} \bb_K \to M$ is an isomorphism)
which forms an \'etale $\varphi$-module over $\aa_K$.
An \emph{\'etale $\m$-module} over $\aa_K$ or $\bb_K$
is an \'etale $\varphi$-module
equipped with a semilinear action of $\Gamma$ which commutes with the
$\varphi$-action and is continuous for the profinite topology on
$\Gamma$ and the weak topology on $\aa_K$.
Note that an \'etale $\m$-module over $\bb_K$ may contain
an $\aa_K$-lattice which forms an \'etale $\varphi$-module over $\aa_K$
but is not stable under $\Gamma$;
on the other hand,
the images of such a lattice under $\Gamma$ span another lattice
which forms an \'etale $\varphi$-module over $\aa_K$.

For $T$ a $\Z$-linear representation,
define $\D(T)=(\aa\otimes_{\Zp} T)^{H_K}$; this gives
an $\aa_K$-module equipped with commuting semilinear actions
of $\varphi$ and $\Gamma$. Similarly,
for $V$ a $\Q$-linear representation,
define $\D(V) = (\bb \otimes_{\Qp} V)^{H_K}$.

\begin{theorem}[Fontaine] \label{thm:Fontaine}
The functor $T \mapsto \D(T)$ (resp.\
$V\mapsto\D(V)$)
is an equivalence from the category of $\Z$-linear representations
(resp.\ $\Q$-linear representations) of $G_{K}$ to the category of \'etale $(\varphi,\Gamma)$-modules over $\aa_{K}$ (resp.\ $\bb_K$); a quasi-inverse functor is given by $D\mapsto(\aa\otimes_{\aa_K} D)^{\varphi=1}$ (resp.\ $D\mapsto (\bb\otimes_{\bb_K} D)^{\varphi=1}$).
\end{theorem}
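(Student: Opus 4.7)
The plan is to construct the explicit quasi-inverse $V \colon D \mapsto (\aa \otimes_{\aa_K} D)^{\varphi = 1}$ and to reduce the equivalence to the following key lemma: for any étale $\varphi$-module $D$ over $\aa_K$, the $\Zp$-module $V(D)$ is free of rank $\rank_{\aa_K}(D)$, and the natural $\aa$-linear map
\[
\aa \otimes_{\Zp} V(D) \;\longrightarrow\; \aa \otimes_{\aa_K} D
\]
is a $\varphi$- and $G_K$-equivariant isomorphism. Granted this, taking $H_K$-invariants recovers $\D(V(D)) \cong D$ (since $\aa^{H_K} = \aa_K$ and $H_K$ acts trivially on $D$), while the analogous base-change statement applied to $\D(T)$, combined with $\aa^{\varphi = 1} = \Zp$, gives $V(\D(T)) \cong T$.

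The key lemma itself I would prove by devissage in the $p$-adic filtration. Modulo $p$, $\overline{D} := D/pD$ is a finite free étale $\varphi$-module over the imperfect residue field $\aa_K/p\aa_K$, which embeds in $\et$; after choosing a basis, computing $\varphi$-invariants of $\et \otimes \overline{D}$ amounts to solving a matrix equation $\varphi(x) = A^{-1}x$ with $A \in \mathrm{GL}_n(\aa_K/p\aa_K)$ for $x \in \et^n$. A Lang-type argument, using that $\et$ is algebraically closed and that $\varphi$ acts there as the absolute Frobenius, produces an $\F$-vector space of solutions of dimension $n$ spanning $\et \otimes \overline{D}$. One then lifts stepwise from mod $p^k$ to mod $p^{k+1}$: the obstruction at each step lies in the kernel $p^k\aa / p^{k+1}\aa$, on which $1-\varphi$ is surjective (again by the residual statement). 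Passing to the inverse limit yields a $\Zp$-basis of $\varphi$-fixed vectors in $\aa \otimes_{\aa_K} D$ of the required rank.

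The main technical obstacle is executing the residual Lang-type step rigorously; once that is in hand, everything else is essentially formal. The $\Gamma$-action on $D$ extends semilinearly to $\aa \otimes_{\aa_K} D$, commutes with $\varphi$, and so preserves $V(D)$, equipping it with a $G_K$-action via $\Gamma = G_K/H_K$; continuity follows from the weak-topology continuity assumed for a $(\varphi,\Gamma)$-module together with the continuity of $G_K$ on $\aa$. Functoriality and essential surjectivity become tautological. Finally, the $\bb_K$-linear (resp.\ $\Q$-linear) version reduces to the integral case, since every étale $\varphi$-module over $\bb_K$ contains an $\aa_K$-lattice stable under both $\varphi$ and $\Gamma$, as noted just before the theorem.
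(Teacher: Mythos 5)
The paper does not prove this statement; it is quoted as Fontaine's theorem with a citation to \cite{F91}, so there is no internal argument to compare against. Your sketch is the standard proof — it is essentially Fontaine's original argument: the comparison isomorphism $\aa \otimes_{\Zp} V(D) \to \aa \otimes_{\aa_K} D$ is the key lemma, it is proved by a Lang/Artin--Schreier argument modulo $p$ followed by d\'evissage along the $p$-adic filtration, and the rest (taking $H_K$-invariants, $\aa^{\varphi=1}=\Zp$, reduction of the $\bb_K$-case to a $\Gamma$-stable $\aa_K$-lattice) is formal. Two points of precision are worth flagging. First, the residue field of $\aa$ is not $\et$ but the separable closure $\ee$ of $\ee_{\Qp}$ inside $\et$; the Lang-type step must therefore either be run over $\ee$ directly (separable closedness suffices for Artin--Schreier theory) or, if run over $\et$, be supplemented by the observation that the solution variety of $\varphi(x)=A^{-1}x$ is \'etale over $\ee_K$, so all solutions in $\et$ already lie in $\ee$ — otherwise you only produce fixed vectors in $\at\otimes_{\aa_K}D$ rather than in $\aa\otimes_{\aa_K}D$. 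Second, the $\Zp$-linear representations in the statement include $p$-power-torsion modules, and correspondingly an \'etale $\varphi$-module over $\aa_K$ need not be free; your key lemma should assert that $V(D)$ is a finite $\Zp$-module with the same elementary divisors as $D$ (the mod-$p^k$ d\'evissage you describe proves exactly this), with the free case recovered by passage to the inverse limit. With these adjustments the plan is correct and complete in outline.
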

Dee \cite{D01} extended Fontaine's results to families of $\Z$-representations,
as follows.
Let $R$ be a complete noetherian local ring whose residue field $k_R$ is finite over $\mathbb{F}_p$, equipped with the topology defined by its maximal
ideal $\mathfrak{m}_R$; we may then view $R$ as a topological $\Z$-algebra.
We form the completed tensor product $R\widehat{\otimes}_{\Z}\aa$
by completing the ordinary tensor product for
the ideal $p \aa + \mathfrak{m}_R$, and similarly with $\aa$ replaced by
$\aa_K$.

We define $\m$-modules and \'etale $\m$-modules over $R\widehat{\otimes}_{\Z} \aa_K $
by analogy with the definitions over $\aa_K$.
For $T_R$ an $R$-representation, define
$\D(T_R) =((R\widehat{\otimes}_{\Z}\aa)\otimes_R T_R)^{H_K}$.
We then have the following result.

\begin{theorem}[Dee] \label{thm:Dee}
The functor $T_R\mapsto\D(T_R)$ is an equivalence from the category of
$R$-representations to the category of \'etale
$\m$-modules over $R\widehat{\otimes}_{\Z}\aa_{K}$;
a quasi-inverse functor is given by
\begin{center}
$D\mapsto((R\widehat{\otimes}_{\Z}\aa)\otimes_{R\widehat{\otimes}_{\Z}\aa_K} D)^{\varphi=1}$.
\end{center}
\end{theorem}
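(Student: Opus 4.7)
I would prove Theorem 2.2 by devissage from Fontaine's Theorem 2.1 along the $\MM_R$-adic filtration. Set $R_n = R/\MM_R^{n+1}$; since $R$ is complete Noetherian with finite residue field $k_R$, each $R_n$ is itself a finite ring. Any finitely generated $R$-module $T_R$ is automatically $\MM_R$-adically complete, hence $T_R = \varprojlim_n T_R/\MM_R^{n+1}T_R$ with each quotient a continuous $R_n$-representation. Likewise, a finite $\m$-module $D_R$ over $R\widehat{\otimes}_{\Z}\aa_K$ recovers as $\varprojlim_n D_R/\MM_R^{n+1}D_R$, with each quotient an étale $\m$-module over $R_n\otimes_{\Z}\aa_K$ (no completion is needed at finite level since $R_n$ is finite).

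First I would handle $R = k_R$: restriction of scalars identifies $k_R$-linear representations with $k_R$-equivariant $\F$-representations, and étale $\m$-modules over $k_R\otimes_{\F}\aa_K/p$ with $k_R$-equivariant étale $\m$-modules over $\aa_K/p$, so Theorem 2.1 applied to $p$-torsion $\Z$-representations supplies the equivalence directly. Next, for $R = R_n$ Artinian, I would induct on $n$ using the short exact sequence
\[
0 \to \MM_R^n T_R / \MM_R^{n+1} T_R \to T_R/\MM_R^{n+1} T_R \to T_R/\MM_R^n T_R \to 0
\]
and its image under $\D$. The outer terms are controlled by the residue-field case and by the inductive hypothesis respectively, and the middle equivalence then follows from the five lemma applied to the unit and counit of $\D$ and its quasi-inverse, once exactness of both functors on $R_n$-representations has been checked; the latter reduces to $H_K$-acyclicity of $\aa$, which is part of the classical theory.

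To pass from $R_n$ to $R$ I would show that both $\D$ and its quasi-inverse commute with $\varprojlim_n$. The projective systems $\{T_R/\MM_R^{n+1}T_R\}$ and $\{D_R/\MM_R^{n+1}D_R\}$ have surjective transition maps, so the Mittag--Leffler condition holds and $R^1\varprojlim$ vanishes; consequently both $(-)^{H_K}$ and $(-)^{\varphi=1}$ commute with the inverse limit. Combined with the identifications $(R\widehat{\otimes}_{\Z}\aa)/\MM_R^{n+1} \cong R_n\otimes_{\Z}\aa$ and $(R\widehat{\otimes}_{\Z}\aa_K)/\MM_R^{n+1} \cong R_n\otimes_{\Z}\aa_K$, this promotes the equivalence at each level $n$ to an equivalence over $R$.

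The main obstacle is topological bookkeeping rather than conceptual novelty: one must verify that finitely generated modules over $R\widehat{\otimes}_{\Z}\aa_K$ really are $\MM_R$-adically complete, that the completed tensor product commutes with the quotients $R \to R_n$, and that the étale condition on $\m$-modules is both preserved and reflected by reduction modulo $\MM_R^{n+1}$. These are exactly the formal lemmas underlying Dee's original treatment; once they are in hand, the devissage itself is routine.
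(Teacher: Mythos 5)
The paper gives no proof of this theorem; it is quoted directly from Dee \cite{D01}, and your d\'evissage along the $\mathfrak{m}_R$-adic filtration — reduce to the finite Artinian quotients $R_n$, settle the residue-field case via Fontaine's torsion theory, climb by the five lemma using exactness of $\D$ and its quasi-inverse, then pass to the limit — is precisely the strategy of Dee's original argument. The plan is correct (one small remark: $(-)^{H_K}$ and $(-)^{\varphi=1}$ are kernels and so commute with inverse limits unconditionally; the Mittag--Leffler/surjectivity input is really needed to identify $T_R$ and $D_R$ with the limits of their reductions and to keep the limit finitely generated, which is where you correctly locate the remaining bookkeeping).
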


We next introduce a refinement of Fontaine's result due to
Cherbonnier and Colmez \cite{CC98}.
We define $\m$-modules and \'etale $\m$-modules over the rings
$\aa^\dagger_K$
and
$\bb^\dagger_K$  by analogy with the definitions
over $\aa_K$ and $\bb_K$.
For $V$ a $\Qp$-linear representation, define
$\D_K^{\dagger,r}(V)=(\bdag{,r}\otimes_{\Qp} V)^{H_{K}}$ (where $\bb^{\dagger,r}=\bb\cap\bt^{\dagger,r}$) and
$\D_K^{\dagger}(V)=\cup_{r> 0} \D_K^{\dagger,r}(V)=(\bdag{}\otimes_{\Qp} V)^{H_K}$.
\begin{theorem}[Cherbonnier-Colmez] \label{thm:CC}
For each $\Qp$-linear representation $V$, there exists $r(V) > 0$
such that
\[
\D_K(V)=\bb_{K}\otimes_{\bdag{,r}_{K}}\D^{\dagger,r}_K(V) \qquad
\mbox{for all } r \geqslant r(V).
\]
Equivalently, $\D^{\dagger}_K(V)$ is an \'etale $\m$-module over
$\bb_K^\dagger$ of dimension $\dim_{\Q}V$.
Therefore $V\mapsto\D^\dagger_K(V)$ is an equivalence from the category of $p$-adic
representations of $G_{K}$ to the category of \'etale $(\varphi,\Gamma)$-modules over $\bb_K^\dagger$.
Furthermore, $\D^{\dagger}_K(V)$ is the unique maximal \'etale $\m$-submodule of $\D_K(V)$ over $\bb_K^\dagger$.
\end{theorem}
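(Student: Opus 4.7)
The plan is to reduce the theorem to the \emph{overconvergence} statement: any basis of $\D_K(V)$ over $\bb_K$ can be transformed, by multiplication by a matrix in $\mathrm{GL}_d(\bb_K)$, into a basis whose entries actually lie in $\bb_K^{\dagger,r}$ for some sufficiently small $r > 0$. Granted this, the equality $\D_K(V) = \bb_K \otimes_{\bb_K^{\dagger,r}} \D_K^{\dagger,r}(V)$ follows formally: the transformed basis generates a $\bb_K^{\dagger,r}$-submodule $N \subset \D_K(V)$ stable under $\varphi$ and $\Gamma$ (for $r$ chosen large enough that the matrices of $\varphi$ and a topological generator of $\Gamma$ in this basis also have overconvergent entries), and $N \otimes_{\bb_K^{\dagger,r}} \bb_K \cong \D_K(V)$ by construction, so $N \subseteq \D_K^{\dagger,r}(V)$; a rank comparison using Fontaine's equivalence (Theorem~2.1) forces equality of dimensions and hence of modules.

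For the overconvergence statement itself, I would follow the Tate--Sen descent strategy. Starting from a basis of $\D_K(V)$ over $\bb_K$, the continuity of the $\Gamma$-action in the weak topology gives, for $\gamma \in \Gamma$ close to $1$, a cocycle $U_\gamma \in 1 + \mathfrak{m} \cdot \mathrm{M}_d(\bb_K)$ describing the action on the chosen basis. The heart of the argument is an inductive approximation scheme: using the action of $\varphi$ and the fact that $\varphi$ improves the radius of convergence in a controlled way (so iterating $\varphi^{-n}$ on $\bb_K^{\dagger,s}$ forces entries into progressively smaller annuli, cf.\ the ring $\adag{,s}_{K,n}$ introduced earlier), one constructs a sequence of base-change matrices $M_n \in \mathrm{GL}_d(\bb_K)$ such that the cocycle attached to the transformed basis becomes successively closer to one defined over $\bb_K^{\dagger,s}$. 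The estimates are set up so that the product $\prod_n M_n$ converges in $\mathrm{GL}_d(\bb_K)$ and the limiting basis has overconvergent cocycle; an additional (easier) step handles the matrix of $\varphi$ similarly. This is the main obstacle of the proof, and it is exactly where the fine analytic properties of the rings $\bb_K^{\dagger,s}$ and of the $\Gamma$-action enter in an essential way.

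Once the overconvergent basis exists, the functor $V \mapsto \D_K^\dagger(V)$ is seen to be fully faithful because it factors through Fontaine's equivalence via the faithfully flat inclusion $\bb_K^{\dagger,r} \hookrightarrow \bb_K$, and essential surjectivity follows from the fact that any \'etale $(\varphi,\Gamma)$-module $D^\dagger$ over $\bb_K^\dagger$ yields an \'etale $(\varphi,\Gamma)$-module $\bb_K \otimes_{\bb_K^\dagger} D^\dagger$ over $\bb_K$, which by Theorem~2.1 comes from a $p$-adic representation $V$, and one then verifies $\D_K^\dagger(V) = D^\dagger$ using the overconvergence just proved applied to $V$.

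For the final maximality assertion, let $M \subset \D_K(V)$ be any \'etale $(\varphi,\Gamma)$-submodule over $\bb_K^\dagger$. Extending scalars, $\bb_K \otimes_{\bb_K^\dagger} M$ embeds into $\D_K(V)$ as a sub-\'etale $(\varphi,\Gamma)$-module, which by Fontaine's classification must correspond to a subrepresentation $V' \subseteq V$; an easy rank argument combined with a second application of overconvergence (now to $V'$) shows $V' = V$ and $M \subseteq \D_K^\dagger(V)$. Thus $\D_K^\dagger(V)$ is indeed the unique maximal such submodule.
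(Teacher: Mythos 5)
The paper does not prove this statement at all: it is quoted as Theorem~2.3 with attribution to Cherbonnier--Colmez \cite{CC98}, and is used as a black box in the rest of the text. So there is no ``paper proof'' to compare against; your proposal has to be judged against the known argument.

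Your outline is the right roadmap. The reduction of the equality $\D_K(V)=\bb_K\otimes_{\bdag{,r}_K}\D^{\dagger,r}_K(V)$ to the existence of an overconvergent basis, the derivation of the categorical equivalence from Fontaine's Theorem~2.1 plus overconvergence applied to the representation attached to a given \'etale module, and the maximality argument via extension of scalars and a rank count are all correct and are essentially how the literature organizes the deduction. However, the second paragraph --- the overconvergence statement itself --- is not a proof but a description of the shape of a proof, and it is precisely the hard part of \cite{CC98}. Saying that one ``constructs a sequence of base-change matrices $M_n$ such that the cocycle becomes successively closer to one defined over $\bb_K^{\dagger,s}$'' conceals the two genuinely difficult inputs: (i) a quantitative decompletion statement (the Tate--Sen conditions for the rings $\atdag{,s}$, involving normalized traces and the existence of $(\gamma-1)^{-1}$ on suitable complements, or in the original Cherbonnier--Colmez formulation the analysis of $\D(T)^{\psi=1}$ via the left inverse $\psi$ of $\varphi$); and (ii) the verification that the approximation scheme actually converges, i.e., that the relevant operators are contracting for the valuations $w_s$. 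Moreover the descent must be performed for the full $G_K$-cocycle on a lattice in $\at\otimes T$ (first killing $H_K$, then $\Gamma$), not only for $\gamma\in\Gamma$ acting on $\D_K(V)$ as you write. As it stands, the central lemma is asserted rather than established, so the proposal is an accurate summary of the strategy but not a complete proof.
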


In \cite{BC07}, Berger and Colmez extended these results to families of
$p$-adic representations. However, unlike Dee's families,
the families considered by Berger and Colmez are over
Banach algebras over $\Q$.
(Berger and Colmez are forced to make a freeness hypothesis on the
representation space; we will relax this hypothesis later in the case
of an affinoid algebra. See Definition~\ref{def:fm-fg}.)

For $S$ a commutative Banach algebra over $\Qp$,
let $\OO_S$ be the ring of elements of $S$ of norm at most 1,
and let $I_S$ be the ideal of elements of $\OO_S$ of norm  strictly
less than 1.
Note that it makes sense to form a completed tensor product with
$S$ or $\OO_S$ when the other tensorand carries a norm under it is complete,
e.g.,
for the rings $\atdag{,s}, \adag{,s}_{L,n}, \btdag{,s}, \bdag{,s}_L$
using the norm corresponding to the valuation $w_s$.

\begin{proposition}[{\cite[Proposition 4.2.8]{BC07}}]\label{prop:BC}
Let $S$ be a commutative Banach algebra over $\Qp$.
Let $T_S$ be a free $\OO_S$-linear representation of rank $d$.
Let $L$ be a finite Galois extension of $K$ such that
$G_L$ acts trivially on $T_S/12pT_S$.
Then there exists $n(L,T_S) \geq 0$ such that
for $n\geq n(L,T_S)$,
$(\OO_S\widehat{\otimes}_{\Zp} \tilde{\aa}^{\dag,(p-1)/p})\otimes_{\OO_S}T_S$ has a
 unique sub-$(\OO_S\widehat{\otimes}_{\Zp} \aa^{\dag,(p-1)/p}_{L,n})$-module
$\D^{\dagger,(p-1)/p}_{L,n}(T_S)$ which is free of rank $d$,
is fixed by $H_L$,
has a basis which is almost invariant under $\Gamma_L$
(i.e., for each $\gamma \in \Gamma_L$, the matrix of action
of $\gamma-1$ on the basis has positive valuation),
and satisfies
\[
(\OO_S\widehat{\otimes}_{\Zp} \tilde{\aa}^{\dag,(p-1)/p})\otimes_{\OO_S\widehat{\otimes}_{\Zp} \aa^{\dag,(p-1)/p}_{L,n}}\D^{\dagger,(p-1)/p}_{L,n}(T_S)=
(\OO_S\widehat{\otimes}\tilde{\aa}^{\dag,(p-1)/p})\otimes_{\OO_S}T_S.
\]\end{proposition}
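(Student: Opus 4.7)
The plan is to adapt the Tate--Sen descent method to the Banach-family setting. The key input is the hypothesis that $G_L$ acts trivially on $T_S/12pT_S$: for any initial free $\OO_S$-basis $\e_1,\dots,\e_d$ of $T_S$, each $g\in G_L$ acts on this basis by a matrix in $1 + 12p\cdot M_d(\OO_S)$. I want to adjust this basis inside $(\OO_S\widehat{\otimes}_{\Zp}\atdag{,(p-1)/p})\otimes_{\OO_S} T_S$ by a sequence of change-of-basis matrices converging $p$-adically, such that the limit basis is fixed by $H_L$, has $\OO_S\widehat{\otimes}_{\Zp}\adag{,(p-1)/p}_{L,n}$-span the desired sub-module, and is almost invariant under $\Gamma_L$.

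First I would verify the Tate--Sen axioms for the pair $(\OO_S\widehat{\otimes}_{\Zp}\atdag{,(p-1)/p},\ \OO_S\widehat{\otimes}_{\Zp}\adag{,(p-1)/p}_{L,n})$ with the valuation induced by $w_{(p-1)/p}$. Concretely, for $n$ sufficiently large depending on $L$, one needs: (i) a continuous $\OO_S$-linear projector $R_{L,n}$ from the larger ring onto the smaller one, with explicit operator-norm bound; (ii) for each $\gamma$ in an open subgroup of $\Gamma_L$, an inverse of $\gamma-1$ on the kernel of $R_{L,n}$ with bounded operator norm; (iii) compatibility with $\varphi$. These norm estimates are those of the classical Tate--Sen--Cherbonnier--Colmez formalism, base-extended along $\Zp\to\OO_S$; the crucial point is that a bounded $\Zp$-linear operator between Banach $\Zp$-modules extends to its completed $\OO_S$-base change with the same operator norm.

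The descent then proceeds iteratively. Given a basis whose $H_L$-action matrices lie in $1+p^v M_d$ of the larger ring, I compute the correction suggested by applying $R_{L,n}$ together with the chosen inverses of $\gamma-1$ to the cocycle measuring the failure of the current basis to be $H_L$-fixed; updating the basis by this correction produces a new basis whose $H_L$-action matrices lie in $1+p^{v+c}M_d$ for a uniform constant $c>0$ arising from the Tate--Sen norms. Completeness of $\OO_S\widehat{\otimes}_{\Zp}\atdag{,(p-1)/p}$ for the weak topology guarantees convergence of the product of corrections, yielding the sought basis fixed by $H_L$ and spanning a free $\OO_S\widehat{\otimes}_{\Zp}\adag{,(p-1)/p}_{L,n}$-module. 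Running one step of the same procedure for a topological generator of $\Gamma_L$ (without iterating to eliminate the defect completely) shows the limit basis is almost invariant under $\Gamma_L$.

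For uniqueness, two candidates differ by a matrix $M\in\mathrm{GL}_d(\OO_S\widehat{\otimes}_{\Zp}\atdag{,(p-1)/p})$ that is $H_L$-fixed and almost $\Gamma_L$-invariant; applying $R_{L,n}$ to $M$ and using almost-invariance in the Tate--Sen style forces $M\in\mathrm{GL}_d(\OO_S\widehat{\otimes}_{\Zp}\adag{,(p-1)/p}_{L,n})$. The hard part will be establishing the Tate--Sen axioms with the correct, genuinely $\OO_S$-linear norm bounds after completed base change, together with the uniformity of $n(L,T_S)$: one must show that the classical constants of Cherbonnier--Colmez and Berger for $\atdag{,(p-1)/p}$ transport intact through the completed tensor product, independently of the Banach algebra $S$. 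Once this is in place, the iteration and the uniqueness argument are essentially formal contraction-mapping arguments.
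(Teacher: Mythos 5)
The paper offers no proof of this proposition---it is quoted directly from Berger--Colmez \cite[Proposition~4.2.8]{BC07}---so the only meaningful comparison is with their original argument, and your sketch follows it: one verifies the Tate--Sen axioms for $\OO_S\widehat{\otimes}_{\Zp}\atdag{,(p-1)/p}$ over $\OO_S\widehat{\otimes}_{\Zp}\adag{,(p-1)/p}_{L,n}$, checks that the Tate--Sen constants survive the completed base change from $\Zp$ to $\OO_S$ (which is exactly why the explicit bound $12p$ on the triviality of the $G_L$-action appears), and then runs the contraction-type descent iteration, with uniqueness extracted from $H_L$-invariance plus almost $\Gamma_L$-invariance via the normalized trace/projector. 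Your proposal is the correct and intended route; the only caveat is that the substance of the proof lies in actually establishing the axioms and constants you defer, which is precisely the content of \cite{BC07}.
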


\begin{theorem}[{\cite[Th\'eor\`eme~4.2.9]{BC07}}]\label{thm:BC}
Let $S$ be a commutative Banach algebra over $\Qp$.
Let $V_S$ be an $S$-linear representation
admitting a free Galois-stable $\OO_S$-lattice $T_S$. There exists
an $s(V_S)\geq 0$ such that for any $s\geq s(V_S)$, we may define
\begin{center}
$\D^{\dagger,s}_K(V_S)=((S\widehat{\otimes}_{\Qp} \bb^{\dag,s}_L)\otimes_{\OO_S\widehat{\otimes}_{\Zp} \aa^{\dag,s(V_S)}_L}\varphi^{n} (\D^{\dagger, p-1/p}_{L,n}(T_S)))^{H_K}$
\end{center}
for some $L,n$, so that the construction does not depend
on the choices of $T_S, L, n$, and the following statements hold.
\begin{enumerate}
\item[(1)]
The $(S\widehat{\otimes}_{\Qp} \bb_K^{\dagger,s})$-module
$\D^{\dagger,s}_K(V_S)$ is locally free of rank $d$.
\item[(2)]The natural map $\D^{\dagger,s}_K(V_S)\otimes_{S\widehat{\otimes}_{\Qp} \mathbf{B}_K^{\dagger,s}} (S\widehat{\otimes}_{\Qp} \widetilde{\mathbf{B}}^{\dagger,s}) \ra V_S\otimes_S (S\widehat{\otimes}_{\Qp} \widetilde{\mathbf{B}}^{\dagger,s})$ is an isomorphism.
\item[(3)]For any maximal ideal $\mathfrak{m}_x$ of $S$,
for $V_x = V_S \otimes_S (S/\mathfrak{m}_x)$,
the natural map $\D^{\dagger,s}_K(V_S) \otimes_S (S/\mathfrak{m}_x)
\ra\D^{\dagger,s}_K(V_x)$ is an isomorphism.
\end{enumerate}
\end{theorem}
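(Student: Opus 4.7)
The plan is to construct $\D^{\dagger,s}_K(V_S)$ by lifting the module provided by the previous proposition. Starting from the free lattice $T_S \subset V_S$, fix a finite Galois extension $L/K$ trivializing $T_S/12pT_S$; for $n\geq n(L,T_S)$ the previous proposition produces the canonical $\OO_S\widehat{\otimes}_{\Zp}\aa^{\dag,(p-1)/p}_{L,n}$-module $\D^{\dag,(p-1)/p}_{L,n}(T_S)$. Since $\aa^{\dag,(p-1)/p}_{L,n}=\varphi^{-n}(\aa^{\dag,p^n(p-1)/p}_L)$, applying $\varphi^n$ converts this into a module over $\OO_S\widehat{\otimes}_{\Zp}\aa^{\dag,s(V_S)}_L$ with $s(V_S):=p^n(p-1)/p$. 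For $s\geq s(V_S)$, I extend scalars to $S\widehat{\otimes}_{\Qp}\bb^{\dag,s}_L$ to obtain a submodule $M_{L,s}\subset V_S\otimes_S(S\widehat{\otimes}_{\Qp}\widetilde{\bb}^{\dag,s})$, and set $\D^{\dagger,s}_K(V_S):=M_{L,s}^{H_K}$.

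Next I would verify well-definedness using the uniqueness clause of the previous proposition: enlarging $n$ relates the two modules by $\varphi$; enlarging $L$ to a larger Galois extension $L'$ identifies the $L$-construction with the $H_{L'}$-invariants of the $L'$-construction, so the $L$-dependence vanishes after taking $H_K$-invariants; and replacing $T_S$ by another free lattice is absorbed by the same uniqueness statement. Property (2) then follows by applying $\varphi^n$ to the last displayed identity of the previous proposition, extending scalars to $S\widehat{\otimes}_{\Qp}\widetilde{\bb}^{\dag,s}$, and taking $H_K$-invariants of both sides. Property (3) follows because the reduction mod $\mathfrak{m}_x$ of $\D^{\dag,(p-1)/p}_{L,n}(T_S)$ satisfies the characterizing conditions of the previous proposition for $T_x:=T_S/\mathfrak{m}_xT_S$ and hence equals $\D^{\dag,(p-1)/p}_{L,n}(T_x)$ by uniqueness; extending scalars and taking $H_K$-invariants then give the stated compatibility.

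The main obstacle is Property (1), local freeness of rank $d$ over $S\widehat{\otimes}_{\Qp}\bb^{\dag,s}_K$. By construction, $M_{L,s}$ is free of rank $d$ over $S\widehat{\otimes}_{\Qp}\bb^{\dag,s}_L$ and carries a semilinear action of $H_K$ factoring through the finite quotient $H_K/H_L\cong\Gal(L_\infty/K_\infty)$; local freeness of $M_{L,s}^{H_K}$ thus becomes a Galois descent problem, requiring one to show that the natural map $M_{L,s}^{H_K}\otimes_{S\widehat{\otimes}_{\Qp}\bb^{\dag,s}_K}(S\widehat{\otimes}_{\Qp}\bb^{\dag,s}_L)\to M_{L,s}$ is an isomorphism and to combine this with faithful flatness of $S\widehat{\otimes}_{\Qp}\bb^{\dag,s}_K\to S\widehat{\otimes}_{\Qp}\bb^{\dag,s}_L$. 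Both inputs are delicate because of the completed tensor products, and the cocycle obstruction to trivializing the descent datum is in general nontrivial globally on $\mathrm{Max}(S)$; this is precisely why one obtains only \emph{local} freeness, and it foreshadows the moduli-of-residual-representations phenomenon alluded to in the introduction.
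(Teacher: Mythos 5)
This statement is not proved in the paper at all: it is Th\'eor\`eme~4.2.9 of Berger--Colmez \cite{BC07}, quoted as an external input, so there is no internal proof to compare against. Judged on its own terms, your outline does follow the strategy of the actual Berger--Colmez argument: build $\D^{\dagger,s}_K(V_S)$ from $\varphi^n(\D^{\dagger,(p-1)/p}_{L,n}(T_S))$ by extension of scalars and $H_K$-invariants, use the uniqueness clause of Proposition~2.4 to kill the dependence on $T_S$, $L$, $n$ (and, implicitly, to see that $M_{L,s}$ is $H_K$-stable at all, which you should state), and deduce (2) and (3) from the displayed identity and the uniqueness clause respectively. Those parts are essentially right, modulo the routine but nontrivial check that the completed tensor products commute with reduction mod $\mathfrak{m}_x$ in (3).

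The genuine gap is in (1), which you explicitly do not prove: you reduce it to a descent problem and then declare both inputs ``delicate.'' In fact this is exactly where the content lies, and your closing diagnosis is wrong on two counts. First, there is no cocycle obstruction to effectivity here: for $s$ large, $\bb^{\dagger,s}_L$ is finite and free over $\bb^{\dagger,s}_K$ with $\Gal(L_\infty/K_\infty)$ acting, hence $S\widehat{\otimes}_{\Qp}\bb^{\dagger,s}_L$ is a finite free Galois extension of $S\widehat{\otimes}_{\Qp}\bb^{\dagger,s}_K$, and descent along it is always effective; the reason the conclusion is ``locally free'' rather than ``free'' is simply that the descended module $M_{L,s}^{H_K}$ is a priori only a direct summand of a finite free module (compare the proof of Theorem~3.11 in this paper, where the same point occurs: restriction of scalars exhibits $\D^{\dagger,s}_K$ as a projective direct summand, and projectivity yields local freeness under a noetherian hypothesis). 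Second, this has nothing to do with the moduli-of-residual-representations obstruction mentioned in the introduction: that obstruction concerns the \emph{inverse} functor, i.e.\ recovering $V_S$ from a globally \'etale $\m$-module (Chenevier's example), not the functor $\D^{\dagger,s}_K$ applied to an actual representation, which always succeeds. To complete (1) you would need to carry out the descent: show $M_{L,s}^{H_K}\otimes_{S\widehat{\otimes}_{\Qp}\bb^{\dagger,s}_K}(S\widehat{\otimes}_{\Qp}\bb^{\dagger,s}_L)\to M_{L,s}$ is an isomorphism (e.g.\ via a normal basis element for $L_\infty/K_\infty$, as in \cite{BC07}), conclude projectivity, and then pass from projective to locally free.
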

We put $S\widehat{\otimes}_{\Qp} \bb^\dagger_K = \cup_{s>0}
(S\widehat{\otimes}_{\Qp} \bb^{\dagger,s}_K)$ and
$S\widehat{\otimes}_{\Qp} \bt^\dagger = \cup_{s>0}
(S\widehat{\otimes}_{\Qp} \bt^{\dagger,s})$.
(Beware that $S\widehat{\otimes}_{\Qp} \bb^\dagger_K$ does not necessarily
embed into $S\widehat{\otimes}_{\Qp} \bb_K$, due to the incompatibility between
the topologies used for the completed tensor products.)
We then put
\[
\D_K^\dagger(V_S)=\D^{\dagger,s}_K(V_S)\otimes_{S\widehat{\otimes}_{\Qp} \bdag{,s}_K } (S\widehat{\otimes}_{\Qp} \bb^\dagger_K).
\]
We may recover $V_S$ from $\D_K^\dagger(V_S)$ as follows.

\begin{lemma}\label{lem:phi=1}
We have $(S\widehat{\otimes}_{\Qp} \bt^\dagger)^{\varphi=1}= S$.
\end{lemma}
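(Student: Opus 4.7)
The plan is to reduce the statement to a single truncation $\bt^{\dagger,s}$ and exploit a $\Qp$-Banach decomposition $\bt^{\dagger,s} = \Qp \oplus N_s$ on which $\varphi - 1$ has a continuous inverse.

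First I would note that since $\bt^\dagger = \bigcup_{s>0} \bt^{\dagger,s}$, any $x \in S \widehat{\otimes}_{\Qp} \bt^\dagger$ with $\varphi(x) = x$ lies in $S \widehat{\otimes}_{\Qp} \bt^{\dagger,s}$ for some $s > 0$, so the claim reduces to showing $(S \widehat{\otimes}_{\Qp} \bt^{\dagger,s})^{\varphi=1} = S$ for each fixed $s$. The classical identity $(\bt^{\dagger,s})^{\varphi=1} = \Qp$ is immediate from $\bt^{\varphi=1} = \Qp$ (the Witt vector computation) combined with $\bt^{\dagger,s} \subseteq \bt$.

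Next I would establish a continuous $\Qp$-linear splitting $\bt^{\dagger,s} = \Qp \oplus N_s$, with $N_s$ a closed subspace, such that $\varphi - 1$ restricts to a topological isomorphism from $N_s$ onto a closed subspace of $\bt^{\dagger,ps}$. This uses the Artin-Schreier surjectivity of $\varphi - 1 : \et \to \et$ with kernel $\mathbb{F}_p$ to build a continuous $\Qp$-linear section of the quotient $\bt^{\dagger,s} \twoheadrightarrow \bt^{\dagger,s}/\Qp$, and verifies invertibility of $\varphi - 1$ on the image by lifting the Artin-Schreier splitting to $\at^{\dagger,s}$ via $p$-adic iteration, using the estimate $w_s(\varphi(x)) = p\, w_{s/p}(x)$ to control norms.

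Given this splitting, completing the tensor product with $S$ yields $S \widehat{\otimes}_{\Qp} \bt^{\dagger,s} = S \oplus (S \widehat{\otimes}_{\Qp} N_s)$; since $\varphi - 1$ is a topological embedding with continuous inverse on $N_s$, the induced operator $1 \otimes (\varphi - 1)$ is injective on $S \widehat{\otimes}_{\Qp} N_s$, so the $\varphi$-fixed part of the second summand vanishes, giving $(S \widehat{\otimes}_{\Qp} \bt^{\dagger,s})^{\varphi=1} = S$. The main obstacle will be the construction of $N_s$ and the verification that $\varphi - 1$ is a topological embedding on it; this is the delicate norm-estimate step (done with respect to $w_s$ and $v_{\et}$), and may conveniently be pulled back from analogous decompositions available in Colmez's \emph{Espaces Vectoriels de dimension finie} theory applied to $\bt^{\dagger}_{\rig}$, where closed-image and continuous-inverse properties of $\varphi - 1$ modulo $\Qp$ are standard.
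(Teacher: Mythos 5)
Your strategy---split off $\Qp$ inside $\bt^{\dagger,s}$ and kill the complement after completing the tensor product with $S$---rests entirely on a claim you do not establish: that there is a closed complement $N_s$ of $\Qp$ in $\bt^{\dagger,s}$ on which $\varphi-1$ is a topological embedding onto a closed subspace of $\bt^{\dagger,ps}$. This is not an off-the-shelf fact, and it is the whole content of the argument. Note that $\varphi$ maps $(\bt^{\dagger,s},w_s)$ into the \emph{larger} space $(\bt^{\dagger,ps},w_{ps})$, with $w_{ps}(\varphi(x))=p\,w_s(x)$, while the ``identity'' part of $\varphi-1$ is the inclusion $\bt^{\dagger,s}\hookrightarrow\bt^{\dagger,ps}$, which is continuous but very far from a topological embedding (elements of $w_s$-valuation $0$ can have arbitrarily large $w_{ps}$-valuation). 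So the required lower bound on $w_{ps}(\varphi(x)-x)$ in terms of $w_s(x)$ for $x\in N_s$ is a genuine Artin--Schreier-type estimate in Teichm\"uller coordinates, of the same flavor as (but not contained in) Lemma~5.1 of the paper, which treats $\varphi^{-1}-1$; the appeal to Colmez's \emph{Espaces Vectoriels de dimension finie} is too vague to supply it. Two further points you gloss over: the reduction to a single $s$ and the final step both use that the transition maps $S\widehat{\otimes}_{\Qp}\bt^{\dagger,s}\to S\widehat{\otimes}_{\Qp}\bt^{\dagger,s'}$, and the map $S\widehat{\otimes}_{\Qp}M_s\to S\widehat{\otimes}_{\Qp}\bt^{\dagger,ps}$ for your closed image $M_s$, remain injective after completion; this is true (closed subspaces of Banach spaces over a discretely valued field are complemented), but it must be said, since completed tensor products do not preserve injectivity in general.

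The paper avoids all of this by decomposing the \emph{other} tensor factor. One reduces to $S$ countably topologically generated, chooses a Schauder basis of $S$ over $\Qp$ via \cite[Proposition~2.7.2/3]{BGR84}, and identifies $S\widehat{\otimes}_{\Qp}\bt^\dagger$ with $l^\infty_0(I,\bt^\dagger)$ as a topological $\Qp$-vector space; $\varphi$ then acts coordinatewise, and the lemma follows at once from the classical identity $(\bt^\dagger)^{\varphi=1}=\Qp$, with no norm estimates on $\varphi-1$ at all. This also disposes of the injectivity issues above for free. Your program may well be completable, but as written it is a plan rather than a proof; I recommend the Schauder basis route.
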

\begin{proof}
We reduce at once to the case where $S$
is countably topologically generated over $\Qp$. In this case,
by \cite[Proposition~2.7.2/3]{BGR84},
we can find a \emph{Schauder basis} of $S$ over $\Q$; in other words,
there exists an index set $I$ such that $S$ is isomorphic as a topological
$\Q$-vector space to the Banach space
\begin{center}
$l^\infty_0(I,\Q)=\{(a_i)_{i\in I}\mid a_i\in \Q, a_i\ra0\}$.
\end{center}
(The supremum norm need only be equivalent to the Banach norm on $S$;
the two need not be equal.)
We can then write $S \widehat{\otimes}_{\Qp} \bt^\dagger$, as a topological
$\Q$-vector space, as
\begin{center}
$l^\infty_0(I,\bt^\dagger)=\{(a_i)_{i\in I}\mid a_i\in \bt^\dagger, a_i\ra0\}$.
\end{center}
In this presentation, the $\varphi$-action carries
$(a_i)_{i\in I}$ to $(\varphi(a_i))_{i \in I}$. It is then clear that
$(S \widehat{\otimes}_{\Qp} \bt^\dagger)^{\varphi=1}=(l^\infty_0(I,\bt^\dagger))^{\varphi=1}=l^\infty_0(I,\Q)=S$.
\end{proof}

\begin{proposition}\label{prop:phi=1}
We have
\[
(\D_K^\dagger(V_S)\otimes_{S\widehat{\otimes}_{\Qp} \bb^\dagger_K}(S\widehat{\otimes}_{\Qp} \bt^\dagger))^{\varphi=1} = V_S.
\]
\end{proposition}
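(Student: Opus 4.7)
The plan is to combine Theorem 3.6(2) with Lemma 3.7; the argument is essentially formal manipulation once those two results are in hand. First, Theorem 3.6(2) provides, for each $s \geq s(V_S)$, a $\varphi$- and $G_K$-equivariant isomorphism
$$\D^{\dagger,s}_K(V_S)\otimes_{S\widehat{\otimes}_{\Qp} \bb_K^{\dagger,s}} (S\widehat{\otimes}_{\Qp} \bt^{\dagger,s}) \xrightarrow{\sim} V_S\otimes_S (S\widehat{\otimes}_{\Qp} \bt^{\dagger,s}),$$
in which $\varphi$ on the right-hand side acts trivially on $V_S$ and by its natural action on the tensorand $S\widehat{\otimes}_{\Qp} \bt^{\dagger,s}$. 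Passing to the filtered union over $s > 0$ (which is compatible with $\varphi$, since $\varphi$ carries $\bt^{\dagger,s}$ into $\bt^{\dagger,ps}$) yields the identification
$$\D_K^\dagger(V_S)\otimes_{S\widehat{\otimes}_{\Qp} \bb^\dagger_K}(S\widehat{\otimes}_{\Qp} \bt^\dagger) \cong V_S \otimes_S (S\widehat{\otimes}_{\Qp} \bt^\dagger).$$

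Next, the hypothesis that $V_S$ admits a free Galois-stable $\OO_S$-lattice $T_S$ implies that $V_S \cong T_S \otimes_{\OO_S} S$ is free over $S$ of some rank $d$. Choosing an $S$-basis identifies $V_S \otimes_S (S\widehat{\otimes}_{\Qp} \bt^\dagger)$ with $(S\widehat{\otimes}_{\Qp} \bt^\dagger)^d$ as $\varphi$-modules (with $\varphi$ acting componentwise on the right-hand side), so the functor $V_S \otimes_S -$ commutes with the formation of $\varphi$-invariants. Combined with Lemma 3.7 this gives
$$\bigl(V_S \otimes_S (S\widehat{\otimes}_{\Qp} \bt^\dagger)\bigr)^{\varphi=1} = V_S \otimes_S (S\widehat{\otimes}_{\Qp} \bt^\dagger)^{\varphi=1} = V_S \otimes_S S = V_S,$$
which is the desired conclusion.

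The two bookkeeping points that require a little care along the way are that $\varphi$-invariants commute with the filtered colimit in $s$ (immediate from $\varphi$-equivariance of the transition inclusions $\bt^{\dagger,s} \hookrightarrow \bt^{\dagger,ps}$) and with tensoring against $V_S$ (immediate by working coordinatewise in an $S$-basis). I do not anticipate any substantive obstacle beyond correctly tracking the $\varphi$-action through the Berger--Colmez isomorphism; all of the real technical input has already been absorbed into the cited Theorem 3.6(2) and Lemma 3.7.
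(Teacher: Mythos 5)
Your proof is correct and follows exactly the paper's own argument: apply part (2) of the Berger--Colmez theorem (Theorem 2.5 in the paper's numbering, not 3.6) to identify $\D_K^\dagger(V_S)\otimes_{S\widehat{\otimes}_{\Qp}\bb^\dagger_K}(S\widehat{\otimes}_{\Qp}\bt^\dagger)$ with $V_S\otimes_S(S\widehat{\otimes}_{\Qp}\bt^\dagger)$, then use freeness of $V_S$ and the identity $(S\widehat{\otimes}_{\Qp}\bt^\dagger)^{\varphi=1}=S$ (Lemma 2.6) to conclude. The bookkeeping you spell out (compatibility with the union over $s$ and commuting invariants past the free tensorand) is left implicit in the paper but is exactly what is needed.
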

\begin{proof}
From Theorem~\ref{thm:BC}(2) we get $\D_{K}^\dagger(V_S)\otimes_{S\widehat{\otimes}_{\Qp} \bb^\dagger_{K}} (S\widehat{\otimes}_{\Qp} \bt^\dagger)
=V_S\otimes_S(S\widehat{\otimes}_{\Qp} \bt^\dagger)$.
It follows that
\begin{center}
$(\D_{K}^\dagger(V_S)\otimes_{S\widehat{\otimes}_{\Qp} \bb^\dagger_{K} }
(S\widehat{\otimes}_{\Qp} \bt^\dagger))^{\varphi=1} =V_S\otimes_S
(S\widehat{\otimes}_{\Qp} \bt^\dagger)^{\varphi=1}=V_S$
\end{center}
by Lemma~\ref{lem:phi=1}.
\end{proof}

This suggests that the object $\D_K^\dagger(V_S)$
merits the following definition.
\begin{definition}\label{def:(phi,Gamma)-module}
Define a \emph{$\m$-module} over
$S\widehat{\otimes}_{\Qp} \bb^\dagger_K$
to be a finite
locally free module over
$S\widehat{\otimes}_{\Qp} \bb^\dagger_K$,
equipped with commuting continuous $\m$-actions such that $\varphi^*D_S\ra D_S$ is an isomorphism.
We say a $\m$-module $M_S$ over $S\widehat{\otimes}_{\Qp} \bb^\dagger_{K}$
is \emph{\'etale} if it admits a finite $(\varphi, \Gamma)$-stable
$(\OO_S \widehat{\otimes}_{\Zp}\aa^\dagger_{K})$-submodule $N_S$
such that $\varphi^*N_S\ra N_S$ is an isomorphism and the induced map
\[
N_S \otimes_{\OO_S \widehat{\otimes}_{\Zp}\aa^\dagger_{K}}
S\widehat{\otimes}_{\Qp} \bb^\dagger_{K} \to M_S
\]
is an isomorphism.
In this language, Theorem~\ref{thm:BC} implies that
$\D_K^\dagger(V_S)$
is an \'etale $\m$-module over $S\widehat{\otimes}_{\Qp} \bb^\dagger_K$.
\end{definition}

\section{Gluing on affinoid spaces}

Throughout this section, let $S$ denote an affinoid algebra over $\Qp$.
We explain in this section how to perform gluing for finite modules
over $S\widehat{\otimes}_{\Qp} \bb^\dagger_K$.
We start with some basic notions from \cite{BGR84}.
\begin{definition}\label{def:affinoid}
Let $M(S)$ be the set of maximal ideals of $S$, i.e., the affinoid
space associated to $S$.
For $X$ a subset of $M(S)$,
an \emph{affinoid subdomain} of $X$ is a subset $U$ of $X$ for which
there exists a morphism $S \to S'$ of affinoid algebras such that
the induced map $M(S') \to M(S)$ is universal for maps from an affinoid
space to $M(S)$ landing in $U$. The algebra $S'$ is then unique up to unique
isomorphism, and the resulting map $M(S) \to U$ is a bijection.

The set $M(S)$ carries two canonical $G$-topologies, defined as follows.
In the \emph{weak $G$-topology}, the admissible open sets are the
affinoid subdomains, and the admissible coverings are the finite
coverings.
In the \emph{strong $G$-topology},
the admissible open sets are the subsets $U$ of $M(S)$ admitting a covering
by affinoid subdomains such that the induced covering of any
affinoid subdomain of $U$ can be refined to a finite cover
by affinoid subdomains,
and the admissible coverings are the ones whose restriction to any
affinoid subdomain can be refined to a finite cover
by affinoid subdomains.
The categories of sheaves on these two topologies are equivalent,
because the strong $G$-topology is \emph{slightly finer} than the
weak one \cite[\S 9.1]{BGR84}.
\end{definition}

We need a generalization
of the Tate and Kiehl theorems on coherent sheaves
on affinoid spaces.
\begin{definition}\label{def:presheaf}
For $A$ a commutative Banach algebra over $\Qp$, define the presheaf $\A$
on the weak $G$-topology of $M(S)$ by declaring that
\[
\A(M(S')) = S' \widehat{\otimes}_{\Qp} A.
\]
\end{definition}

\begin{lemma}\label{lem:presheaf=sheaf}
For $A$ a commutative Banach algebra over $\Qp$, the presheaf $\A$
is a sheaf for the weak $G$-topology of $M(S)$,
and hence extends uniquely to the strong $G$-topology.
\end{lemma}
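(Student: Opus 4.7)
To verify the sheaf axiom for the weak $G$-topology, it suffices to fix an affinoid subdomain $M(S')$ of $M(S)$ and a finite covering $\{M(S_i)\}_{i=1}^n$ of $M(S')$ by affinoid subdomains, and show that the augmented \v{C}ech complex
\[
0 \to S' \widehat{\otimes}_{\Qp} A \to \prod_i S_i \widehat{\otimes}_{\Qp} A \to \prod_{i,j} (S_i \widehat{\otimes}_{S'} S_j) \widehat{\otimes}_{\Qp} A
\]
is exact. By Tate's acyclicity theorem, this holds when $A = \Qp$; so the whole point is to show that $-\widehat{\otimes}_{\Qp} A$ preserves this exactness.

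The plan is to mimic the Schauder-basis trick already used in Lemma~2.6. First I would reduce to the case where $A$ is countably topologically generated over $\Qp$: since exactness of the above complex is witnessed by at most countably many elements, any failure would already be visible inside the completed tensor product with a countably topologically generated sub-$\Qp$-Banach-space of $A$ containing $1$. Once $A$ is countably topologically generated, \cite[Proposition~2.7.2/3]{BGR84} produces a Schauder basis of $A$ over $\Qp$ indexed by some set $I$, which yields natural topological $\Qp$-linear identifications
\[
V \widehat{\otimes}_{\Qp} A \;\cong\; l^\infty_0(I, V)
\]
for every $\Qp$-Banach space $V$, functorially in $V$. Applied to $V = S'$, $V = S_i$, and $V = S_i \widehat{\otimes}_{S'} S_j$, this turns the \v{C}ech complex for $\A$ into the \v{C}ech complex for the structure sheaf with $l^\infty_0(I, -)$ wrapped around it (using that $l^\infty_0(I,-)$ commutes with finite products).

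It then remains to check that the functor $l^\infty_0(I, -)$ preserves kernels of the relevant maps of $\Qp$-Banach spaces. For a continuous linear map $f: V \to W$ with kernel $K$, one has $\ker l^\infty_0(I,f) = l^\infty_0(I, K)$ as soon as the subspace norm on $K$ from $V$ is equivalent to the intrinsic Banach norm on $K$, since then ``null in $K$'' and ``null in $V$'' agree. This is exactly the point I expect to be the main obstacle, and it is resolved by invoking the open mapping theorem: Tate's acyclicity asserts that $S'$ maps bijectively onto the kernel of $\prod_i S_i \to \prod_{i,j} S_i \widehat{\otimes}_{S'} S_j$, and both sides are Banach $\Qp$-spaces, so this continuous bijection is a topological isomorphism. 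Feeding this back gives the desired exactness and completes the sheaf check. Extension to the strong $G$-topology is then automatic from the general principle \cite[\S9.1]{BGR84} that sheaves on the weak $G$-topology extend uniquely to the strong one.
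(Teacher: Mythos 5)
Your proposal is correct, and at the top level it follows the same strategy as the paper: invoke Tate's acyclicity for $A=\Qp$ and then argue that $-\widehat{\otimes}_{\Qp}A$ preserves the exactness of the augmented \v{C}ech complex. The difference is in how that second step is handled. The paper first refines an arbitrary finite covering to a Laurent covering \cite[Proposition~8.2.2/5]{BGR84} and then to a single covering $M(S)=M(S\langle f\rangle)\cup M(S\langle f^{-1}\rangle)$, so that the relevant complex becomes a \emph{short} exact sequence of $\Qp$-Banach spaces ending in a surjection; for such sequences preservation under completed tensor product with a Banach space is standard (they are strict, indeed topologically split over a discretely valued field), and the paper simply cites \cite[\S 8.2.3]{BGR84}. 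You instead keep a general finite covering, where the complex is only left-exact, and you justify the preservation of kernels explicitly via the Schauder-basis identification $V\widehat{\otimes}_{\Qp}A\cong l^\infty_0(I,V)$ together with the open mapping theorem to see that the Banach norm on $S'$ is equivalent to the subspace norm it inherits from $\prod_i S_i$. That open-mapping step is exactly the right thing to isolate, and your argument is a reasonable alternative that makes explicit a point the paper leaves implicit; the paper's reduction to Laurent coverings buys a shorter proof by outsourcing both the combinatorics and the functional analysis to \cite{BGR84}. One small caveat: your reduction to countably topologically generated $A$ tacitly uses that for a closed $\Qp$-Banach subspace $A_0\subseteq A$ the map $V\widehat{\otimes}_{\Qp}A_0\to V\widehat{\otimes}_{\Qp}A$ is injective (needed to know that an element killed in $\prod_{i,j}(S_i\widehat{\otimes}_{S'}S_j)\widehat{\otimes}_{\Qp}A$ is already killed over $A_0$); this holds because closed subspaces of nonarchimedean Banach spaces over a discretely valued field are complemented, but it deserves a sentence.
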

\begin{proof}
Since every finite covering of an affinoid space by affinoid subdomains
can be refined to a Laurent covering, it is enough to check the sheaf
condition for Laurent coverings \cite[Proposition~8.2.2/5]{BGR84}.
This reduces to checking for coverings of the form
\[
M(S) = M(S \langle f \rangle) \cup M(S \langle f^{-1} \rangle)
\]
for $f \in S$.
The claim then is that the sequence
\[
0 \to S \widehat{\otimes}_{\Qp} A \to
(S\langle f \rangle \widehat{\otimes}_{\Qp} A)
\times (S
\langle f^{-1} \rangle \widehat{\otimes}_{\Qp} A)
\stackrel{d^0}{\to} S \langle f, f^{-1} \rangle\widehat{\otimes}_{\Qp} A
\to 0
\]
is exact; this follows from the corresponding assertion for $A = \Qp$,
for which see \cite[\S 8.2.3]{BGR84}.
\end{proof}

From now on, we consider only the strong $G$-topology on $M(S)$.
\begin{definition}\label{def:coherent}
For $A$ a commutative Banach algebra over $\Qp$,
an $\A$-module $N$ on $M(S)$
is \emph{coherent} if there exists an admissible
covering $\{M(S_i)\}_{i \in I}$ of $M(S)$
by affinoid subdomains such that for
each $i \in I$, we have $N|_{M(S_i)} = \mathrm{coker}(\phi: \A^m|_{M(S_i)}
\to \A^n|_{M(S_i)})$ for some morphism $\phi$ of $\A$-modules.
By Lemma~\ref{lem:presheaf=sheaf}, this is equivalent to requiring $N|_{M(S_i)}$ to be the
sheaf associated to some finitely presented
$(S_i \widehat{\otimes}_{\Qp} A)$-module.
\end{definition}

\begin{lemma}\label{lem:cech-vanish}
Let $A$ be a commutative Banach algebra over $\Qp$
such that for each Tate algebra $T_n$ over $\Qp$,
$T_n \widehat{\otimes}_{\Qp} A$ is noetherian.
Then for any coherent $\A$-module $N$ on $M(S)$, the first \v{C}ech cohomology
$\check{H}^1(N)$ vanishes.
\end{lemma}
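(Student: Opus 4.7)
The plan is to follow the classical proof of the Tate acyclicity theorem for coherent sheaves on affinoid spaces, modified to handle coefficients in the auxiliary Banach algebra $A$. By the standard refinement and cofinality arguments in \v{C}ech theory (see \cite[\S 8.1.4]{BGR84}), the vanishing of $\check H^1$ is tested on a cofinal family of admissible covers; we take the finite Laurent coverings of $M(S)$. An induction on the number of Laurent generators further reduces us to verifying the vanishing for a two-element Laurent cover of the form $M(S) = M(S\langle f\rangle) \cup M(S\langle f^{-1}\rangle)$ for some $f\in S$.

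For such a cover I would like to reduce to the case where $N$ arises from a finitely presented $A_S$-module $M$, with $A_S := S\,\widehat{\otimes}_{\Qp}\, A$. This is a Kiehl-type reduction: after passing to a refinement on which $N$ is finitely presented on each piece, one constructs the global module as the kernel of the \v{C}ech coboundary, verifying along the way that the natural maps to the local sections are isomorphisms. Write $A_f, A_{f^{-1}}, A_{f,f^{-1}}$ for the completed tensor products $S\langle f\rangle\,\widehat{\otimes}_{\Qp}\, A$, etc.; under the noetherian hypothesis each of these is noetherian, so finitely generated modules over them carry canonical Banach topologies in which they are complete, and the completed tensor products $M\,\widehat{\otimes}_{A_S}\,A_f$ etc.\ agree with the ordinary tensor products $M\otimes_{A_S}A_f$.

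After this reduction the claim amounts to exactness of
\[
0 \to M \to (M \otimes_{A_S} A_f) \oplus (M \otimes_{A_S} A_{f^{-1}}) \to M \otimes_{A_S} A_{f,f^{-1}} \to 0
\]
for every finitely presented $A_S$-module $M$. The case $M = A_S$ is Lemma~3.3, and $M = A_S^r$ follows by taking a direct sum. For the general case I would choose a finite presentation $A_S^m \to A_S^n \to M \to 0$ and perform a $3\times 3$ diagram chase, deducing exactness for $M$ from the already-known case $M = A_S^n$ together with exactness of the columns. This requires flatness of $A_f, A_{f^{-1}}, A_{f,f^{-1}}$ over $A_S$.

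The main obstacle is establishing this flatness assertion. In the classical case $A = \Qp$ it is part of Tate's theorem that $S\langle f\rangle$ is flat over $S$, proved via an explicit description of $S\langle f\rangle$ as a quotient of a Tate algebra over $S$ combined with noetherianity. The hypothesis that each $T_n\,\widehat{\otimes}_{\Qp}\, A$ is noetherian is precisely what one needs to transport this argument to the $A$-coefficient setting: it ensures that the relevant completed rings remain noetherian and that flatness can be detected on finitely generated modules, whose canonical Banach topology lets one match completed constructions with their algebraic counterparts exactly as in \cite[\S 7.3.2]{BGR84}.
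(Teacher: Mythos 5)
Your opening reduction --- to finite Laurent coverings and then to a two-element cover $M(S)=M(S\langle f\rangle)\cup M(S\langle f^{-1}\rangle)$ on which $N$ is given by finite modules --- matches the paper. The argument breaks at the next step: you propose first to show that $N$ is associated to a global finitely presented $(S\widehat{\otimes}_{\Qp}A)$-module $M$, ``verifying along the way that the natural maps to the local sections are isomorphisms,'' and then to deduce $\check{H}^1=0$ from the Mayer--Vietoris sequence tensored with $M$. This inverts the logical order. In this paper (Proposition~3.6), as in \cite[Lemma~9.4.3/6]{BGR84}, the statement that a coherent module is generated by and associated to a global finite module is \emph{deduced from} the vanishing of $\check{H}^1$ on Laurent covers, not the other way around. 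The data you actually have is a pair of finite modules $M_1$ over $S\langle f\rangle\widehat{\otimes}_{\Qp}A$ and $M_2$ over $S\langle f^{-1}\rangle\widehat{\otimes}_{\Qp}A$ glued over the overlap, and the content of the lemma is precisely the surjectivity of $M_1\oplus M_2\to M_{12}$; setting $M=\ker(d^0)$ gives no information about whether $M$ generates $M_1$ and $M_2$, which is the entire difficulty.

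The flatness of $S\langle f\rangle\widehat{\otimes}_{\Qp}A$ over $S\widehat{\otimes}_{\Qp}A$, which you rightly flag as the main obstacle, also does not follow from the noetherian hypothesis alone: the classical proof that affinoid subdomain maps are flat compares local rings at maximal ideals lying over one another, i.e., it uses the Nullstellensatz, and for $A$-coefficients the analogous statement is exactly the extra hypothesis imposed in Proposition~3.6 and verified only later (Lemmas~3.8--3.9) under further assumptions on $A$. The paper's proof avoids both issues: it runs the successive-approximation argument of \cite[Lemma~9.4.3/5]{BGR84} directly on the cocycle, decomposing coefficients via the exact sequence of Lemma~3.3 and re-expanding the error terms in the other set of generators at each step. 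The only role of the noetherian hypothesis there is to guarantee, via \cite[Proposition~3.7.3/3]{BGR84}, that the finite modules involved are complete for their canonical topologies, so the approximation converges. You should replace your globalization and flatness steps with that approximation argument.
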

\begin{proof}
As in Lemma~\ref{lem:presheaf=sheaf}, it suffices to check vanishing of the first
\v{C}ech cohomology computed on a cover of $M(S)$ of the form
\[
M(S) = M(S \langle f \rangle) \cup M(S \langle f^{-1} \rangle)
\]
for some $f \in S$, such that $N$ is represented on each of
the two covering subsets by a finite module. For this, we may follow
the proof of \cite[Lemma~9.4.3/5]{BGR84} verbatim.
(The noetherian condition is needed so that the invocation of
\cite[Proposition~3.7.3/3]{BGR84} within the proof of
\cite[Lemma~9.4.3/5]{BGR84} remains valid.)
\end{proof}

To recover an analogue of Kiehl's theorem, however, we need an
extra condition.
\begin{proposition}\label{prop:noetherian}
Let $A$ be a commutative Banach algebra over $\Qp$ such that for each
Tate algebra $T_n$ over $\Qp$,
$T_n \widehat{\otimes}_{\Qp} A$ is noetherian
and
the map $\mathrm{Spec}(T_n \widehat{\otimes}_{\Qp}
A) \to \mathrm{Spec}(T_n)$
carries $M(T_n \widehat{\otimes}_{\Qp} A)$ to $M(T_n)$.
Then any coherent $\A$-module $N$ on $M(S)$
is associated to a finite
$(S \widehat{\otimes}_{\Qp} A)$-module.
\end{proposition}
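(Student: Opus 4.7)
The plan is to imitate the proof of Kiehl's theorem (\cite[\S 9.4]{BGR84}) for the presheaf $\A$, using the sheaf property (Lemma~3.3), the vanishing of $\check{H}^1$ (Lemma~3.5), and the two extra hypotheses on $A$. The output will be a finite presentation $\A^m \to \A^n \to N \to 0$ by globally free coherent sheaves, which by exactness of $M \mapsto \widetilde M$ on finite $(S \widehat{\otimes}_{\Qp} A)$-modules implies $N$ is associated to a finite $(S \widehat{\otimes}_{\Qp} A)$-module.

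First I would fix a finite admissible cover $M(S) = \bigcup_{i=1}^r M(S_i)$ by affinoid subdomains on which $N$ is represented by a finite $(S_i\widehat{\otimes}_{\Qp}A)$-module $N_i$. By refining the cover (e.g., to a Laurent cover, as in Lemma~3.3), I may assume the cover is of a form on which the standard Kiehl arguments can be run. Because each $S_i$ is a quotient of a Tate algebra $T_n$, the noetherian hypothesis guarantees that $S_i\widehat{\otimes}_{\Qp} A$ is noetherian, so each $N_i$ is finitely presented and the kernel of any map between finite free $\A$-modules restricts to a coherent sheaf on $M(S_i)$.

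Next I would construct finitely many global sections $s_1,\dots,s_k \in N(M(S))$ whose restrictions to $M(S_i)$ generate $N_i$ for each $i$. This is the heart of Kiehl's argument: one chooses local generators over each $M(S_i)$, uses the Čech vanishing $\check{H}^1(N)=0$ (which gives surjectivity of $\prod_i N(M(S_i))\to N(M(S_i)\cap M(S_j))$ modulo the diagonal) to produce approximate global lifts on a finer cover, and iterates a Runge-style Banach approximation to make the lift exact in the limit. Once one has such $s_1,\dots,s_k$, surjectivity of $\A^k \to N$ on each $M(S_i)$ can be checked by Nakayama's lemma at every maximal ideal of $S_i\widehat{\otimes}_{\Qp} A$; this is precisely where I invoke the hypothesis that the map $\mathrm{Spec}(T_n\widehat{\otimes}_{\Qp}A) \to \mathrm{Spec}(T_n)$ carries maximal ideals to maximal ideals, since it lets one detect surjectivity of the module-theoretic cokernel using stalks at rigid analytic points coming from $M(S_i)$.

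Finally, given the surjection $\A^k \to N \to 0$, the kernel $K$ is a coherent $\A$-module (its local presentations come from noetherianness of $S_i\widehat{\otimes}_{\Qp}A$), so the argument above applied to $K$ produces a surjection $\A^m \to K \to 0$, yielding a two-step presentation $\A^m \to \A^k \to N \to 0$. Taking global sections and using Lemma~3.3 and Lemma~3.5 (both to identify $\A^j(M(S))$ with $(S\widehat{\otimes}_{\Qp} A)^j$ and to kill higher cohomology obstructions), $N$ is then the sheaf associated to $\mathrm{coker}\bigl((S\widehat{\otimes}_{\Qp}A)^m \to (S\widehat{\otimes}_{\Qp}A)^k\bigr)$, a finite module. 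The main obstacle I expect is the approximation step producing the generating global sections: in the classical case one relies on explicit Banach-space estimates in Tate algebras, and here one must verify that these estimates pass through the completed tensor product with $A$, which is precisely the role played by the noetherian and point-tracking hypotheses.
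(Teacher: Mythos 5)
Your proposal is correct and follows essentially the same route as the paper: both imitate Kiehl's theorem via BGR \S 9.4, produce finitely many global sections generating $N$ on each $M(S_i)$ using the \v{C}ech vanishing of Lemma~3.5, check surjectivity by Nakayama at maximal ideals of $S_i\widehat{\otimes}_{\Qp}A$ via the point-tracking hypothesis, and then repeat for the kernel to get a finite presentation. The only cosmetic difference is that the paper obtains the global generators directly from \cite[Lemma~9.4.3/6]{BGR84} (surjectivity of $N(M(S))\to(N/\mathfrak{m}N)(M(S_i))$, which needs only the already-established $\check{H}^1$ vanishing) rather than rerunning a Runge-style approximation, so the Banach estimates you worry about at the end are not actually needed.
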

\begin{proof}
There must exist a finite covering of $M(S)$ by affinoid subdomains
$M(S_1),\dots,M(S_n)$ such that $N|_{M(S_i)}$ is associated
to a finite $(S_i \widehat{\otimes}_{\Qp} A)$-module $N_i$.
As in \cite[Lemma~9.4.3/6]{BGR84}, we may deduce from Lemma~\ref{lem:cech-vanish}
that for each $\mathfrak{m} \in M(S_i)$, the map
$N(M(S)) \to (N/\mathfrak{m} N)(M(S_i))$ is surjective.
By the hypothesis on $A$,
each maximal ideal of $S_i \widehat{\otimes}_{\Qp} A$ lies over a maximal ideal
of $S_i$; we may thus deduce that $N(M(S)) \otimes_S S_i$ surjects onto $N(M(S_i))$.
Since the latter is a finite $S_i\widehat\otimes_{\Q}A$-module, we can choose finitely many
elements of $N(M(S))$ which generate all of the $N(M(S_i))$.
That is, we have a surjection $\A^n \to N$ for some $n$;
repeating the argument for the kernel of this map yields the claim.
\end{proof}

To use the above argument, we need to prove a variant of the Nullstellensatz;
for simplicity, we restrict to the case where $K$ is discretely valued
(the case of interest in this paper).
We first prove a finite generation result using ideas from the theory
of Gr\"obner bases.
\begin{lemma}\label{lem:noetherian}
Let $K$ be a complete discretely valued field extension of $\Qp$.
Let $A$ be a commutative Banach algebra over $K$
such that $A$ has the same set of nonzero norms as $K$,
and the ring $\OO_A/I_A$ is noetherian.
Then $T_n \widehat{\otimes}_{\Qp} A$ is also noetherian.
\end{lemma}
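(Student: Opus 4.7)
The plan is to adapt the standard Gr\"obner (standard) basis proof of noetherianity of $T_n$, leveraging the discreteness assumption to pass to the residue ring, where Hilbert's basis theorem applies to $\OO_A/I_A$.

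First, I would unpack the hypotheses into a convenient form. Since $A$ has the same nonzero norms as $K$, and $K$ is discretely valued with some uniformizer $\pi$, one checks that $I_A = \pi\OO_A$, so the noetherianity hypothesis reads: $\OO_A/\pi\OO_A$ is noetherian. Writing $B = T_n \widehat{\otimes}_{\Qp} A \cong A\langle x_1,\dots,x_n\rangle$ and $\OO_B = \OO_A\langle x_1,\dots,x_n\rangle$ (convergent power series with coefficient norms going to $0$), the crucial observation is that reduction modulo $\pi$ collapses to a polynomial ring:
\[
\OO_B/\pi\OO_B \;\cong\; (\OO_A/\pi\OO_A)[x_1,\dots,x_n],
\]
because in any element of $\OO_B$ only finitely many coefficients have norm equal to $1$ (the rest lie in $\pi\OO_A$). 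By Hilbert's basis theorem, this polynomial ring is noetherian.

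Next, given an ideal $J \subseteq B$, I would set $J_0 = J \cap \OO_B$ and let $\overline{J_0}$ denote its image in $\OO_B/\pi\OO_B$; this is an ideal, hence finitely generated by some $\overline{g_1},\dots,\overline{g_r}$. Lift these to elements $g_1,\dots,g_r \in J_0$ of norm exactly $1$. I claim that $g_1,\dots,g_r$ generate $J_0$ as an $\OO_B$-module. To prove this I would run a successive approximation argument: given $f \in J_0$ with $\|f\| \le 1$, write $\overline{f} = \sum_i \overline{h_i}\,\overline{g_i}$ in the polynomial ring, lift each $\overline{h_i}$ coefficient-by-coefficient to an $h_i^{(0)} \in \OO_B$ with $\|h_i^{(0)}\|\le 1$, and observe that
\[
f - \sum_i h_i^{(0)} g_i \in J_0 \cap \pi\OO_B = \pi \bigl(J \cap \OO_B\bigr) = \pi J_0,
\]
using that $\pi$ is a unit in $B$. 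Set $f_1 = \pi^{-1}(f - \sum_i h_i^{(0)} g_i) \in J_0$, which again satisfies $\|f_1\|\le 1$, and iterate. This produces sequences $\{h_i^{(k)}\}_{k\ge 0}$ in $\OO_B$ with $\|h_i^{(k)}\|\le 1$; since $\OO_B$ is $\pi$-adically complete, the series $h_i := \sum_{k\ge 0} \pi^k h_i^{(k)}$ converges in $\OO_B$, and $f = \sum_i h_i g_i$. Finally, because every element of $J$ lies in $\pi^{-m} J_0$ for some $m$, the same $g_1,\dots,g_r$ generate $J$ as a $B$-module, and $B$ is noetherian.

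The only genuinely delicate step will be verifying the crucial structural identity $\OO_B/\pi\OO_B \cong (\OO_A/\pi\OO_A)[x_1,\dots,x_n]$ (a polynomial ring, not a power series ring), since this is exactly where the hypothesis that $A$ and $K$ share the same value group is used; without it, $I_A$ need not be principally generated by a single $\pi \in K$, and the convergence $a_I \to 0$ in $\OO_A$ would not translate into eventual vanishing of $\overline{a_I}$. Once this identification and the principality $I_A = \pi\OO_A$ are established, the rest of the argument is a routine Banach-space successive-approximation packaging of the Hilbert basis theorem.
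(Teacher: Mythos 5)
Your proof is correct, and it takes a genuinely different route from the paper. The paper works directly in $T_n \widehat{\otimes}_{\Qp} A$ with a monomial ordering: it runs a Buchberger-type algorithm whose termination is forced by the noetherianity of $\OO_A/I_A$ together with Dickson's lemma on $\ZZ_{\geq 0}^n$, and then a division algorithm in which the discreteness of the value group guarantees that the Gauss norms of the successive remainders tend to $0$. You instead use the discreteness up front to get $I_A = \pi\OO_A$ and the identification $\OO_B/\pi\OO_B \cong (\OO_A/I_A)[x_1,\dots,x_n]$ (your key observation that convergence of coefficients to $0$ forces all but finitely many residues to vanish is exactly right), apply Hilbert's basis theorem there, and lift generators by $\pi$-adic successive approximation; the identities $J \cap \pi\OO_B = \pi(J\cap\OO_B)$ and the $\pi$-adic completeness of the unit ball make the lift converge, and the fact that every Gauss norm lies in $|K^\times|$ lets you pass from $J \cap \OO_B$ to $J$. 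Your argument is shorter and more conceptual for this lemma in isolation; what the paper's heavier Gr\"obner setup buys is that the same machinery (degrees, leading coefficients, the modified Buchberger and division algorithms) is reused almost verbatim in the proof of the Nullstellensatz-type Lemma~3.8, where one must work with \emph{nonconstant} leading coefficients and a closed maximal ideal, a situation your reduction mod $\pi$ does not directly handle. One small point worth making explicit in a final write-up: the identification of the unit ball of $T_n \widehat{\otimes}_{\Qp} A$ with $\OO_A\langle x_1,\dots,x_n\rangle$ uses that the tensor product norm is the Gauss norm $\sup_I |a_I|$ and that this supremum is attained, so that the nonzero norms of $B$ also lie in $|K^\times|$.
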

\begin{proof}
Equip the monoid $\ZZ^n_{\geq 0}$
with the componentwise partial ordering
$\leq$ and the lexicographic total ordering $\preceq$.
That is, $(x_1,\dots,x_n) \leq (y_1,\dots,y_n)$ if $x_i \leq y_i$ for all
$i$, whereas $(x_1,\dots,x_n) \preceq (y_1,\dots,y_n)$ if there exists
an index $i \in \{1,\dots,n+1\}$ such that $x_j = y_j$ for $j< i$,
and either $i=n+1$ or $x_i \leq y_i$.
Recall that $\leq$ is a well partial ordering and that $\preceq$
is a well total ordering; in particular, any sequence in
$\ZZ^n_{\geq 0}$ has a subsequence which is weakly increasing
under both orderings.

For $I = (i_1,\dots,i_n) \in \ZZ^n_{\geq 0}$, write $t^I$ for
$t_1^{i_1} \cdots t_n^{i_n}$. We represent each element
$x \in T_n \widehat{\otimes}_{\Qp} A$
as a formal sum $\sum_I x_I t^I$ with
$x_I \in A$, such that for each $\epsilon > 0$, there exist only finitely
many indices $I$ with $|x_I| \leq \epsilon$.
For $x$ nonzero, define the \emph{degree} of $x$, denoted $\deg(x)$,
to be the maximal index $I$ under $\preceq$ among those indices maximizing
$|x_I|$. Define the \emph{leading coefficient} of $x$ to be the coefficient
$x_{\deg(x)}$.

Let $J$ be any ideal of $T_n \widehat{\otimes}_{\Qp} A$.
We apply a Buchberger-type algorithm to construct a generating set
$m_1,\dots,m_k$ for $J$, as follows.
Start with the empty list (i.e., $k=0$).
As long as possible, given $m_1,\dots,m_k$,
choose an element $m_{k+1}$ of $J \cap \OO_A$
with leading coefficient $a_{k+1}$,
for which we \emph{cannot} choose
$I_1,\dots,I_k \in \ZZ_{\geq 0}^n$ and $b_1,\dots,b_k \in \OO_A$ satisfying
both of the following conditions.
\begin{enumerate}
\item[(a)]
We have $\deg(m_{k+1}) = \deg(m_i t^{I_i})$ whenever $b_i \neq 0$.
\item[(b)]
We have $a_{k+1} - a_1 b_1 - \cdots - a_k b_k \in I_A$.
\end{enumerate}
In particular, we must have $|a_{k+1}| = 1$.

We claim this process must terminate. Suppose the contrary;
then there must exist a sequence of indices $i_1 < i_2 < \cdots$ such that
$\deg(m_{i_1}) \leq \deg(m_{i_2}) \leq \cdots$.
Then the sequence of ideals
$(a_{i_1}), (a_{i_1}, a_{i_2}), \dots$ in $\OO_A/I_A$ must be strictly increasing,
but this violates the hypothesis that
$\OO_A/I_A$ is noetherian. Hence the process terminates.

Let $|\cdot|_1$ denote the 1-Gauss norm on $T_n \widehat{\otimes}_{\Qp} A$.
We now write each element of $J$ as a linear combination of $m_1,\dots,m_k$
using a form of the Buchberger division algorithm.
Start with some nonzero $x \in J$ and put $x_0 = x$.
Given $x_l \in J$,
if $x_l = 0$, put $y_{l,1} = \cdots = y_{l,k} = 0$ and $x_{l+1} = 0$.
Otherwise, choose $\lambda \in A^\times$ with $|\lambda x_l|_1 = 1$.
By the construction of $m_1,\dots,m_k$, there must exist
$I_1,\dots,I_k \in \ZZ_{\geq 0}^n$
and $b_1,\dots,b_k \in \OO_A$
satisfying conditions (a) and (b) above with $m_{k+1}$ replaced
by $\lambda x_l$.
Put $y_{l,i} = \lambda^{-1}
b_i t^{I_i}$ and $x_{l+1} = x_l - y_{l,1} m_1 - \cdots
 - y_{l,k} m_k$.

If $|x_{l+1}|_1 = |x_l|_1$, we must have
$\deg(x_{l+1}) \prec \deg(x_l)$. Since $\preceq$ is a well ordering,
we must have $|x_{l'}|_1 < |x_l|_1$ for some $l' > l$.
Since $K$ is discretely valued and $A$ has the same group of nonzero norms as $K$,
we conclude that $|x_l|_1 \to 0$ as $l \to \infty$.

Since $|y_{l,i}|_1 \leq |x_l|_1$, we may set
$y_i = \sum_{l=0}^\infty y_{l,i}$ to get elements
of $T_n \widehat{\otimes}_{\Qp} A$
such that $x = y_1 m_1 + \cdots + y_k m_k$. This proves that
$J$ is always finitely generated,
so $T_n \widehat{\otimes}_{\Qp} A$ is noetherian.
\end{proof}

We next establish an analogue of the Nullstellensatz by combining the
previous argument
with an idea of Munshi \cite{M03}.
\begin{lemma}\label{lem:Nullstellensatz}
Take $K$ and $A$ as in Lemma~\ref{lem:noetherian},
but
suppose also that the intersection of the nonzero prime ideals of
$A$ is zero.
Then for any maximal ideal $\mathfrak{m}$ of
$T_n \widehat{\otimes}_{\Qp} A$,
the intersection $\mathfrak{m} \cap A$ is nonzero.
\end{lemma}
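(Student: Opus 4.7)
The plan is to argue by contradiction, combining the Buchberger-type division from Lemma~3.7 with a prime-avoidance trick in the style of Munshi~\cite{M03}. Suppose $\mathfrak{m}\cap A=0$. Then the composite $A\to (T_n\widehat{\otimes}_{\Qp}A)/\mathfrak{m}$ is injective and lands in a field, so $A$ must be a domain. Moreover, $A$ cannot itself be a field, because the empty intersection of nonzero primes would then equal $A$ rather than $0$, contradicting the hypothesis. Our goal is to manufacture a nonzero element of $\mathfrak{m}\cap A$.

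Running the algorithm from the proof of Lemma~3.7 on $\mathfrak{m}\cap\OO_{T_n\widehat{\otimes}_{\Qp}A}$, I obtain a finite generating family $m_1,\ldots,m_k$ of $\mathfrak{m}$ whose leading coefficients $a_1,\ldots,a_k\in\OO_A$ have norm $1$; in particular they are all nonzero. Since $A$ is a domain the product $a_1\cdots a_k$ is nonzero, and the hypothesis on nonzero primes then provides a nonzero prime $\mathfrak{p}\subset A$ avoiding each $a_i$. Passing to the localization $A_{\mathfrak{p}}$ turns every $a_i$ into a unit.

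Now localize $T_n\widehat{\otimes}_{\Qp}A$ at the multiplicative set $A\setminus\mathfrak{p}$ to obtain a ring $R_{\mathfrak{p}}$. Because $\mathfrak{m}\cap A=0\subset\mathfrak{p}$, the extended ideal $\mathfrak{m} R_{\mathfrak{p}}$ remains proper, and $R_{\mathfrak{p}}/\mathfrak{m} R_{\mathfrak{p}}$ is still a field containing $A_{\mathfrak{p}}$. Since every $a_i$ is now a unit, the generators $m_i$ of $\mathfrak{m} R_{\mathfrak{p}}$ behave like a Gr\"obner basis with unit leading coefficients. An iterated Weierstrass-preparation argument then shows that each $t_j$ satisfies a monic polynomial relation over $A_{\mathfrak{p}}$ modulo $\mathfrak{m}$, so $R_{\mathfrak{p}}/\mathfrak{m} R_{\mathfrak{p}}$ is integral (in fact finite) over $A_{\mathfrak{p}}$. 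The standard lemma that a domain sitting inside an integral field extension is itself a field then forces $A_{\mathfrak{p}}$ to be a field, so the nonzero prime $\mathfrak{p}$ would have to be zero in the domain $A$, yielding the desired contradiction.

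The main obstacle is the iterated Weierstrass step: establishing that $R_{\mathfrak{p}}/\mathfrak{m} R_{\mathfrak{p}}$ really is integral over $A_{\mathfrak{p}}$ once the leading coefficients have been made units. Concretely, one must verify that the monomials not lying in the ideal generated by the leading monomials of $m_1,\ldots,m_k$ span a finite $A_{\mathfrak{p}}$-module, which uses both the maximality of $\mathfrak{m}$ (as opposed to just primality) and good behavior of the Buchberger division algorithm across the localization $A\to A_{\mathfrak{p}}$. Adapting the division estimates from Lemma~3.7 to $A_{\mathfrak{p}}$, which is no longer a Banach algebra in any natural sense, is where the real care is required; the rest of the argument is commutative-algebra bookkeeping.
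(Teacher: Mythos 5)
Your reduction to the case where $A$ is a domain, the choice of a nonzero prime $\mathfrak{p}$ avoiding the product of the leading coefficients, and the observation that $\mathfrak{m}R_{\mathfrak{p}}$ stays proper are all sound, and the prime-avoidance step is indeed the Munshi-style ingredient the paper also uses. But the argument collapses at the step you yourself flag: the claim that $R_{\mathfrak{p}}/\mathfrak{m}R_{\mathfrak{p}}$ is integral (in fact finite) over $A_{\mathfrak{p}}$ via ``iterated Weierstrass preparation.'' Division against the $m_i$ in $T_n\widehat{\otimes}_{\Qp}A$ is an infinite reduction process whose convergence rests on completeness of the coefficient ring --- this is exactly how Lemma~3.7 writes an element of $J$ as $\sum y_i m_i$, with each $y_i$ a convergent infinite sum. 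The localization $A_{\mathfrak{p}}$ carries no complete norm, so the division need not terminate or converge there, and nothing forces the monomials outside the leading-term ideal to span a finite $A_{\mathfrak{p}}$-module. Note also that the leading coefficients produced by Lemma~3.7 are only units modulo $I_A$ (they have norm $1$), not units of $\OO_A$, so even before localizing you do not have genuine Weierstrass polynomials. This finiteness claim is the entire content of the Nullstellensatz in this setting, and it is not supplied.

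The paper's proof stays inside the complete ring and replaces integrality by a topological statement. Since $T_n\widehat{\otimes}_{\Qp}A$ is noetherian, $\mathfrak{m}$ is closed; applying the open mapping theorem to the projection $\psi$ that kills the constant term shows that $\psi$ restricted to $\mathfrak{m}$ has bounded inverse. One then runs the Buchberger-type algorithm with respect to the \emph{nonconstant} leading term, uses $\mathfrak{p}$ exactly as you do to find $x_0\in\mathfrak{p}(T_n\widehat{\otimes}_{\Qp}A)$ with $1+x_0\in\mathfrak{m}$, and divides so that $|\psi(x_l)|_1\to 0$; boundedness of $\psi^{-1}$ then gives $|x_l|_1\to 0$, whence $1\in\overline{\mathfrak{m}}=\mathfrak{m}$, a contradiction. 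The two ideas missing from your proposal are (i) working with the nonconstant degree together with the open-mapping-theorem bound, so that smallness of the nonconstant part controls the whole element, and (ii) deriving the contradiction from closedness of $\mathfrak{m}$ in the complete ring, rather than from a finiteness statement over the incomplete ring $A_{\mathfrak{p}}$, where the analytic machinery is unavailable.
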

\begin{proof}
Suppose on the contrary that $\mathfrak{m}$ is a maximal
ideal of $T_n \widehat{\otimes}_{\Qp} A$
such that $\mathfrak{m} \cap A = 0$.
Since $T_n \widehat{\otimes}_{\Qp} A$ is noetherian by
Lemma~\ref{lem:noetherian}, $\mathfrak{m}$ is closed by
\cite[Proposition~3.7.2/2]{BGR84}.
Hence $\mathfrak{m} + A$ is also a closed subspace of
$T_n \widehat{\otimes}_{\Qp} A$.
Let $\psi: T_n \widehat{\otimes}_{\Qp} A \to (T_n \widehat{\otimes}_{\Qp} A)/A$
be the canonical projection; it
is a bounded surjective morphism of Banach spaces
with kernel $A$.
Put $V = \psi(\mathfrak{m} + A)$; since
$\mathfrak{m} + A = \psi^{-1}(V)$,
the open mapping theorem \cite[\S 2.8.1]{BGR84}
implies that $V$ is closed.
Hence $\psi$ induces a bounded bijective map
$\mathfrak{m} \to V$ between two Banach spaces;
by the open mapping theorem again, the inverse of $\psi$ is also bounded.

Using the power series representation of
elements of $T_n \widehat{\otimes}_{\Qp} A$, let us
represent $(T_n \widehat{\otimes}_{\Qp} A)/A$
as the set of series in $T_n \widehat{\otimes}_{\Qp} A$ with zero constant term.
We may then represent $\psi$
as the map that subtracts off the constant term.
Define the \emph{nonconstant degree}
of $x \in T_n \widehat{\otimes}_{\Qp} A$
as $\deg'(x) = \deg(\psi(x))$, and define the
\emph{leading nonconstant coefficient} of $x$ to be
the coefficient $x_{\deg'(x)}$.

We construct $m_1,\dots,m_k \in \mathfrak{m}$
using the following modified Buchberger algorithm.
As long as possible, choose an element $m_{k+1}$ of $\mathfrak{m} \cap \OO_A$
with nonconstant leading coefficient $a_{k+1}$,
for which we cannot choose
$I_1,\dots,I_k \in \ZZ_{\geq 0}^n$ and $b_1,\dots,b_k \in \OO_A$ satisfying
both of the following conditions.
\begin{enumerate}
\item[(a)]
We have $\deg'(m_{k+1}) = \deg'(m_i t^{I_i})$ whenever $b_i \neq 0$.
\item[(b)]
We have $a_{k+1} - a_1 b_1 - \cdots - a_k b_k \in I_A$.
\end{enumerate}
Again, this algorithm must terminate.

By the hypothesis on $A$, we can choose a nonzero
prime ideal $\mathfrak{p}$ of $A$
not containing the product $a_1 \cdots a_k$.
By our earlier hypothesis that $\mathfrak{m} \cap A = 0$,
we have $\mathfrak{m} \cap \mathfrak{p} = 0$.
Hence $\mathfrak{m} + \mathfrak{p}
(T_n \widehat{\otimes}_{\Qp} A)$ is the unit ideal,
so we can find $x_0 \in \mathfrak{p}(T_n \widehat{\otimes}_{\Qp} A)$
such that $1 + x_0 \in \mathfrak{m}$.

We now perform a modified division algorithm.
Given $x_l \in \mathfrak{p}(T_n \widehat{\otimes}_{\Qp} A)$
such that $1 + x_l \in \mathfrak{m}$,
we cannot have $x_l \in A$.
We may thus choose $\lambda \in A^\times$ with $|\psi(\lambda x_l)|_1 = 1$.
By the construction of $m_1,\dots,m_k$, there must exist
$I_1,\dots,I_k \in \ZZ_{\geq 0}^n$
and $b_1,\dots,b_k \in A$
satisfying conditions (a) and (b) above with $m_{k+1}$ replaced
by $\lambda x_l$.
Put $y_{l,i} = \lambda^{-1} b_i t^{I_i}$
and $x_{l+1} = x_l - y_{l,1} m_1 - \cdots - y_{l,k} m_k$.

As in the proof of Lemma~\ref{lem:noetherian}, we see that
$|\psi(x_l)|_1 \to 0$ as $l \to \infty$.
Since $\psi$ has bounded inverse, we also conclude that
$|x_l|_1 \to 0$ as $l \to \infty$. However, since $\mathfrak{m}$
is closed, this yields the contradiction $1 \in \mathfrak{m}$.
We conclude that $\mathfrak{m} \cap A \neq 0$, as desired.
\end{proof}

This yields the following.
\begin{lemma}\label{lem:Nullstellensatz-2}
For any Tate algebra $T_n$ over $\Qp$, any rational $s>0$,
and any complete discretely valued field extension $K$ of $\Qp$,
$T_n \widehat{\otimes}_{\Qp} \calE^s_{K}$ is
noetherian
and each of its maximal ideals has
residue field which is finite over $K$.
In particular, every maximal
ideal of $T_n \widehat{\otimes}_{\Qp} \calE^s_{K}$ lies over a maximal
ideal of $T_n$.
\end{lemma}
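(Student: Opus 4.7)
The plan is to apply Lemmas 3.7 and 3.8 with $A = \calE^s_K$, and then to leverage Weierstrass preparation inside $\calE^s_K$ to upgrade the conclusion to the residue-field statement.

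First I verify the hypotheses of Lemma 3.7 for $A$. The ring $A = \calE^s_K$ is a commutative Banach algebra over $K$; its norm $w_s$ takes values in $v_p(K^\times)\cup\{+\infty\}$ because all coefficients lie in $K$, so $A$ and $K$ have the same nonzero norms. A direct computation shows $\OO_A/I_A$ is a field isomorphic to $k((T))$ (with $k$ the residue field of $K$): modulo a uniformizer of $K$, the convergence condition on negative-index coefficients kills all but finitely many of them. This residue ring is noetherian, so Lemma 3.7 yields noetherianness of $T_n\widehat{\otimes}_{\Qp}\calE^s_K$. For Lemma 3.8 I must further check that the nonzero primes of $A$ intersect to zero. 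Given a nonzero $f \in A$, pick a point $t_0 \in \overline K$ strictly inside the annulus of convergence with $f(t_0)\neq 0$; such $t_0$ exists because the zero set of $f$ is discrete in every closed sub-annulus while the annulus contains uncountably many algebraic points. Evaluation at $t_0$ gives a $K$-algebra map $A \to K(t_0)$ whose kernel is a nonzero prime ideal not containing $f$.

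By Lemma 3.8, for every maximal ideal $\mathfrak{m}$ of $T_n\widehat{\otimes}_{\Qp}A$ the contraction $\mathfrak{p} = \mathfrak{m}\cap A$ is a nonzero prime of $A$. The decisive step is to show $A/\mathfrak{p}$ is finite over $K$, for which I invoke the Weierstrass preparation theorem for $\calE^s_K$ (essentially due to Lazard): every nonzero $f \in A$ admits a factorization $f = u\cdot T^k\cdot P(T)$ with $u\in A^\times$, $k\in\ZZ$, and $P\in K[T]$ a polynomial whose roots in $\overline K$ are exactly the zeros of $f$ inside the annulus. Since $T$ is a unit of $A$, $A/(f)\cong A/(P)$, which by the Chinese remainder theorem splits as a finite product of finite field extensions of $K$. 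Hence for any nonzero $f \in \mathfrak{p}$, $A/\mathfrak{p}$ is a domain quotient of the finite $K$-algebra $A/(f)$, and so is itself a finite extension of $K$.

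To conclude, set $L = (T_n\widehat{\otimes}_{\Qp}A)/\mathfrak{m}$. Since $A/\mathfrak{p}$ is finite over $\Qp$, the completed tensor product $T_n\widehat{\otimes}_{\Qp}(A/\mathfrak{p})$ agrees with the ordinary tensor product $T_n\otimes_{\Qp}(A/\mathfrak{p})$, a finite $T_n$-algebra of which $L$ is a residue field. Thus $L$ is a finite extension of $T_n/(\mathfrak{m}\cap T_n)$; as the latter is a domain integrally embedded in the field $L$, it is itself a field, so $\mathfrak{m}\cap T_n$ is maximal. By Tate's Nullstellensatz this residue field is finite over $\Qp$, so $L$ is finite over $\Qp$ and hence over $K$. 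The main obstacle is the appeal to Weierstrass preparation in $\calE^s_K$, where care is required because $\calE^s_K$ consists of bounded Laurent series on a half-open annulus; one must track both the behavior at $T=0$ and at the inner radius. Once that preparation is secured, the rest is formal.
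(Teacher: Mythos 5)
Your overall strategy coincides with the paper's: feed $A = \calE^s_K$ into Lemmas 3.7 and 3.8 and then deduce finiteness of the residue fields from the division/Weierstrass theory of $\calE^s_K$. However, your verification of the hypotheses of Lemma 3.7 contains two errors, one of which is a genuine gap. First, the Banach norm on $\calE^s_K$ is the \emph{maximum} of the $p$-adic Gauss norm and the norm attached to $w_s$, and it does not take values in $|K^\times|$ in general: the $w_s$-valuation of a constant $a\in K$ is $\frac{ps}{p-1}v_p(a)$, which for general rational $s$ does not lie in $v_p(K^\times)$ (e.g.\ for $K=\Qp$ and $s=(p-1)/(2p)$ the element $p$ of $A$ has Banach norm $p^{-1/2}$). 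So the hypothesis of Lemma 3.7 that $A$ have the same set of nonzero norms as $K$ literally fails for the given $K$; the missing step is to first enlarge $K$ (possible because $s$ is rational, so the value group of $A$ is still discrete) so that every nonzero norm of $A$ is achieved in $K$ --- this is exactly how the paper begins its proof. Second, $\OO_A/I_A$ is not the field $k((T))$: the images of $T$ and of $\lambda T^{-1}$, for $\lambda\in K$ chosen so that both elements have norm exactly $1$, are nonzero in the residue ring and their product $\lambda$ lies in $I_A$, so the residue ring has zero divisors (you appear to be reducing modulo $\mathfrak{m}_K\OO_A$ rather than modulo $I_A$, and $I_A$ is strictly larger). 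This second error is not fatal, since the actual residue ring is still noetherian, which is all that Lemma 3.7 requires.

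The rest of your argument is sound and close to the paper's in substance. Your proof that the nonzero primes of $A$ intersect to zero (evaluate at a point of the annulus avoiding the finitely many zeros of $f$) is a legitimate alternative to the paper's Newton-polygon coprimality argument, and your use of Weierstrass--Lazard preparation to show that $A/\mathfrak{p}$ is finite over $K$ is precisely the content behind the paper's one-line assertion that the quotient of $A$ by any nonzero ideal is finite over $K$; both rest on \cite[\S 2.6]{K05b}, and the key point making preparation valid here is that $K$ is discretely valued, so a bounded series has only finitely many zeros in the half-open annulus. One further slip at the end: $K$ is only assumed to be a complete discretely valued extension of $\Qp$, not a finite one, so $A/\mathfrak{p}$ need not be finite over $\Qp$; you should run the final Nullstellensatz step over $K$, viewing $T_n\widehat{\otimes}_{\Qp}(A/\mathfrak{p})$ as a finite algebra over the Tate algebra $K\langle x_1,\dots,x_n\rangle$, rather than concluding that $L$ is finite over $\Qp$.
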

\begin{proof}
The Banach norm on $\calE^s_K$ is the maximum of the $p$-adic norm and the norm
induced by $w_s$. By enlarging $K$, we may assume that the nonzero values of this
norm are all achieved by elements of $K$. In this case,
we check that $A = \calE^s_K$ satisfies the hypotheses
of Lemma~\ref{lem:Nullstellensatz}. First, the nonzero norms of elements of $A$
are all realized by units of the form $\lambda t^i$ with
$\lambda \in K^\times$ and $i \in \ZZ$. Second, the residue ring
$\OO_A/I_A$ is isomorphic to a Laurent polynomial ring over a field,
which is noetherian. Third,
for each nonzero element $x$ of $A$,
we can construct $y \in A$ whose Newton polygon has no slopes
in common with that of $x$; this implies that $x$ and $y$ generate the
unit ideal
(e.g., see \cite[\S 2.6]{K05b}), so any maximal ideal containing $y$
fails to contain $x$. Hence the intersection of the nonzero prime ideals
of $A$ is zero; moreover, the quotient of $A$ by any nonzero ideal is finite over $K$.
We may thus apply Lemma~\ref{lem:Nullstellensatz} to deduce the claim.
\end{proof}

By combining Proposition~\ref{prop:noetherian} with Lemma~\ref{lem:Nullstellensatz-2}, we deduce the following. (The second assertion
follows from the first because for a coherent module, local freeness can be checked at
each maximal ideal.)
\begin{proposition}\label{prop:Kiehl}
For any $s>0$ and any finite extension $K$ of $\Qp$,
for $A = \calE^s_{K}$, any coherent $\A$-module $\mathcal{V}$ on $M(S)$
is associated to a finite $(S \widehat{\otimes}_{\Qp} A)$-module $V$.
Moreover, $\mathcal{V}$ is locally free if and only if $V$ is.
\end{proposition}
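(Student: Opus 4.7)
The proof plan is essentially the one signaled in the remark preceding the statement: we reduce Proposition~3.10 to a direct combination of Proposition~3.6 and Lemma~3.9, plus a standard local-freeness criterion.

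First, I would verify the two hypotheses of Proposition~3.6 for the Banach algebra $A = \calE^s_K$. Proposition~3.6 requires (i) that $T_n \widehat{\otimes}_{\Qp} A$ be noetherian for every Tate algebra $T_n$ over $\Qp$, and (ii) that the map $\mathrm{Spec}(T_n \widehat{\otimes}_{\Qp} A) \to \mathrm{Spec}(T_n)$ send closed points to closed points. Since $K$ is a finite, hence complete discretely valued, extension of $\Qp$, Lemma~3.9 applies directly and supplies both properties: it states the noetherian condition, and its final sentence asserts exactly that every maximal ideal of $T_n \widehat{\otimes}_{\Qp} \calE^s_K$ lies over a maximal ideal of $T_n$. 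So the hypotheses of Proposition~3.6 are met.

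Next, given an arbitrary coherent $\A$-module $\mathcal{V}$ on $M(S)$, applying Proposition~3.6 immediately yields a finite $(S \widehat{\otimes}_{\Qp} A)$-module $V$ whose associated sheaf is $\mathcal{V}$. The essential trick in Proposition~3.6 is to cover $M(S)$ by finitely many affinoid subdomains $M(S_i)$ on which $\mathcal{V}$ is given by finite modules $V_i$, then use vanishing of $\check{H}^1$ (Lemma~3.5) together with the Nullstellensatz-type property supplied by Lemma~3.9 to promote a compatible system of local generators to a global finite presentation. All the technical work is already contained in Proposition~3.6 and the lemmas leading to it, so there is nothing further to check here.

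For the second assertion, note that the sheaf $\mathcal{V}$ associated to a finite $(S \widehat{\otimes}_{\Qp} A)$-module $V$ is locally free precisely when its stalks (or equivalently its localizations at maximal ideals) are free of constant rank. Since $V$ is already finitely presented over the noetherian ring $S \widehat{\otimes}_{\Qp} A$ (by Lemma~3.9 with $T_n$ a Tate algebra surjecting onto $S$), local freeness of $V$ is equivalent to freeness at every maximal ideal of $S \widehat{\otimes}_{\Qp} A$. By the lying-over statement of Lemma~3.9, every such maximal ideal sits above a maximal ideal of $S$, so local freeness of $V$ as a module coincides with local freeness of $\mathcal{V}$ as an $\A$-module on $M(S)$.

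The only substantive ingredients are Proposition~3.6 and Lemma~3.9; once both are in hand, this proposition is a direct corollary, so I do not anticipate a real obstacle. The one place to be careful is confirming that the notion of ``local freeness of the sheaf $\mathcal{V}$'' one wants to check on $M(S)$ really is detected at maximal ideals of $S \widehat{\otimes}_{\Qp} A$ rather than only at points of $M(S)$; this is exactly where the lying-over conclusion of Lemma~3.9 is used a second time, and it is the reason the parenthetical in the paper is phrased as it is.
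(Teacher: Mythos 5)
Your proposal is correct and follows exactly the route the paper takes: Lemma~3.9 supplies both hypotheses of Proposition~3.6 for $A=\calE^s_K$, which gives the finite module $V$, and the local-freeness equivalence is checked at maximal ideals using the lying-over conclusion of Lemma~3.9. The paper's own justification is just the one-sentence remark preceding the statement, and your write-up is a faithful (and slightly more detailed) expansion of it.
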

Using this, we may extend Theorem~\ref{thm:BC} for affinoid algebras, to eliminate
the hypothesis requiring a free Galois-stable lattice.
We first handle the case where $V_S$ is itself free.
\begin{theorem}\label{thm:BC-generalization}
Let $S$ be an affinoid algebra over $\Qp$.
Let $V_S$ be a free $S$-linear representation.
There exists
$s(V_S) \geq 0$ such that for $s\geq s(V_S)$,
we may construct a $(S\widehat{\otimes}_{\Qp} \mathbf{B}_K^{\dagger,s})$-module
$\D^{\dagger,s}_K(V_S)$ satisfying the following conditions.
\begin{enumerate}
\item[(1)]
The $(S\widehat{\otimes}_{\Qp} \mathbf{B}_K^{\dagger,s})$-module
$\D^{\dagger,s}_K(V_S)$ is locally free of rank $d$.
\item[(2)]The natural map $\D^{\dagger,s}_K(V_S)\otimes_{S\widehat{\otimes}_{\Qp} \mathbf{B}_K^{\dagger,s}} (S\widehat{\otimes}_{\Qp} \widetilde{\mathbf{B}}^{\dagger,s}) \ra V_S\otimes_S (S\widehat{\otimes}_{\Qp} \widetilde{\mathbf{B}}^{\dagger,s})$ is an isomorphism.
\item[(3)]For any maximal ideal $\mathfrak{m}_x$ of $S$,
for $V_x = V_S \otimes_S (S/\mathfrak{m}_x)$,
the natural map $\D^{\dagger,s}_K(V_S) \otimes_S (S/\mathfrak{m}_x)
\ra\D^{\dagger,s}_K(V_x)$ is an isomorphism.
\item[(4)] The construction is functorial in $V_S$, compatible with passage
from $K$ to a finite extension, and compatible with Theorem~\ref{thm:BC} in case $V_S$
admits a Galois-stable free lattice.
\end{enumerate}
\end{theorem}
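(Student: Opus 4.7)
The plan is to apply Theorem~2.5 locally, after producing a free Galois-stable lattice on a small enough affinoid subdomain around each point, then globalize using the coherent sheaf machinery of Lemma~3.3 and Proposition~3.10. The key obstruction to directly applying Theorem~2.5 is that $V_S$ free over $S$ need not carry a \emph{free} Galois-stable $\OO_S$-lattice globally; the idea is that it always does so locally on $M(S)$.

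First, for each $x\in M(S)$ I would construct an affinoid subdomain $M(S_x)\ni x$ on which $V_{S_x}:=V_S\otimes_S S_x$ admits a free Galois-stable $\OO_{S_x}$-lattice $T_{S_x}$. Since $k(x)=S/\mathfrak{m}_x$ is a finite extension of $\Qp$, the specialization $V_x$ has a Galois-stable $\OO_{k(x)}$-lattice $T_x$, necessarily free since $\OO_{k(x)}$ is a DVR. Lift a basis of $T_x$ to vectors $f_1,\ldots,f_d\in V_S$; on a small enough affinoid neighborhood of $x$ these remain an $S'$-basis of $V_{S'}$. Continuity of $\rho\colon G_K\to GL_d(S')$ and compactness of $G_K$ yield a uniform bound on the entries of $\rho(g)$; shrinking $M(S')$ further to some $M(S_x)$ brings every $\rho(g)$ into $GL_d(\OO_{S_x})$, so the $\OO_{S_x}$-span of $(f_i)$ is a free Galois-stable lattice. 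Apply Theorem~2.5 to the pair $(V_{S_x},T_{S_x})$ to obtain $\D^{\dagger,s}_K(V_{S_x})$ over $S_x\widehat{\otimes}_{\Qp}\bb^{\dagger,s}_K$, locally free of rank $d$ for every $s\geq s(V_{S_x})$.

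By quasi-compactness of $M(S)$ finitely many such pieces $M(S_1),\ldots,M(S_m)$ suffice, and I would set $s(V_S)=\max_i s(V_{S_i})$. Characterization~(2) of Theorem~2.5 pins down $\D^{\dagger,s}_K(V_{S_i})$ as the unique submodule of $V_{S_i}\otimes(S_i\widehat{\otimes}_{\Qp}\bt^{\dagger,s})$ with the almost-invariance property of Proposition~2.4, making the construction functorial in $S_i$ and hence compatible with base change on overlaps. The resulting data constitute a coherent $\A$-module on $M(S)$ in the sense of Definition~3.4, where $A=\bb^{\dagger,s}_K\cong\calE^s_{K_0'}$. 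Proposition~3.10 then promotes this to a finite $(S\widehat{\otimes}_{\Qp}\bb_K^{\dagger,s})$-module $\D^{\dagger,s}_K(V_S)$ which is locally free of rank $d$ since it is so on each $M(S_i)$.

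Finally, properties (1)--(3) follow from their local versions: (1) is the last sentence; (2) is obtained by gluing the isomorphism of Theorem~2.5(2) on each $M(S_i)$ via Lemma~3.3 applied to $A=\bt^{\dagger,s}$; (3) is reduced to Theorem~2.5(3) by choosing $i$ with $x\in M(S_i)$ and using that specialization commutes with localization for finitely generated modules. For (4), functoriality and compatibility with finite extension of $K$ follow by applying the uniqueness in Theorem~2.5 on each $M(S_i)$ and gluing, while compatibility with Theorem~2.5 in the presence of a global free Galois-stable lattice is the case where one takes the trivial cover $S_1=S$. The main obstacle is step~one: one must carefully align the continuity of $\rho$ in the Banach norm on $S$ with the rigid-analytic topology on $M(S)$ to guarantee a single affinoid neighborhood on which every $\rho(g)$ simultaneously takes values in $GL_d(\OO_{S_x})$.
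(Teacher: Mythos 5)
Your route is genuinely different from the paper's, and it hinges on a step you yourself flag as the main obstacle but do not actually carry out: that a free $S$-linear representation admits a \emph{free Galois-stable} $\OO_{S_x}$-lattice after restriction to some affinoid neighborhood of each $x \in M(S)$. The sketch you give (lift a basis of $T_x$, invoke a uniform bound on the entries of $\rho(g)$, then ``shrink'') does not yet deliver this. A uniform bound $|c_{ij}(g)|_S \leq C$ does not by itself let you shrink the entries into $\OO_{S_x}$: what you need is that every $c_{ij}(g)$ has norm $\leq 1$ \emph{in the Banach algebra of the smaller affinoid}, uniformly in the compact family $g \in G_K$. This can be rescued --- e.g.\ take a finite $1$-net $\rho(g_1),\dots,\rho(g_N)$ of the compact image, pass to the Weierstrass domain where the entries of these finitely many matrices have absolute value $\leq 1$, and use the ultrametric inequality to control all other $g$ --- but one must then also reconcile the sup-norm bound so obtained with the Banach-norm unit ball $\OO_{S_x}$ that Proposition~2.4 and Theorem~2.5 actually require (these coincide only after choosing the norm on $S_x$ suitably, and not at all if one insists on residue norms on a non-reduced $S_x$). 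None of this is in your write-up, so as it stands the key hypothesis needed to invoke Theorem~2.5 on each piece of the cover is unverified.

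The paper sidesteps this entirely by enlarging the ground field instead of shrinking the base: since $V_S$ is free, pick \emph{any} free $\OO_S$-lattice $T_S$; its stabilizer is an open subgroup of $G_K$, hence contains $G_L$ for some finite Galois extension $L/K$, so $T_S$ is a free $G_L$-stable lattice and Theorem~2.5 applies directly over $L$. One then descends: $\D^{\dagger,s}_K(V_S)$ sits as a direct summand of the restriction of scalars of $\D^{\dagger,s}_L(V_S)$ to $S\widehat{\otimes}_{\Qp}\mathbf{B}^{\dagger,s}_K$, hence is finite projective, hence locally free by the noetherianness of Lemma~3.9, and the $\Gal(L/K)$-action extends $\Gamma_L$ to $\Gamma_K$. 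Your gluing machinery (Lemma~3.3, Proposition~3.10) is exactly what the paper uses afterwards, in Definition~3.12, to pass from free to locally free $V_S$; invoking it already in the proof of Theorem~3.11 is not wrong, but it buys you nothing unless you first close the gap above, whereas the field-extension argument avoids the problem at no cost.
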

\begin{proof}
Let $T_S$ be any free $\OO_S$-lattice in $V_S$.
Since the Galois action is continuous, there exists a finite Galois extension
$L$ of $K$ such that $G_L$ carries $T_S$ into itself.
For such $L$, for $s$ sufficiently large,
$\D^{\dagger,s}_L(V_S)$ is locally free of rank $d$
by Theorem~\ref{thm:BC}; moreover, it carries an action
of $\Gal(L/K)$.
If we restrict scalars on this module back to
$S\widehat{\otimes}_{\Qp} \mathbf{B}_K^{\dagger,s}$,
then $\D^{\dagger,s}_K(V_S)$ appears as a direct summand;
this summand is then finite projective, hence locally free
(since $T_n \widehat{\otimes}_{\Qp} \calE^s_{K}$ is
noetherian by Lemma~\ref{lem:Nullstellensatz-2}).
Moreover, the $\Gal(L/K)$-action on $\D^{\dagger,s}_L(V_S)$ allows
us to extend the $\Gamma_L$-action on $\D^{\dagger,s}_K(V_S)$ to
a $\Gamma_K$-action. This yields the desired assertions.
\end{proof}

\begin{definition}\label{def:fm-fg}
Let $S$ be an affinoid algebra over $\Qp$.
Let $V_S$ be a locally free $S$-linear representation;
we can then choose a finite covering of $M(S)$
by affinoid subdomains $M(S_1),\dots,M(S_n)$ such that
$V_i = V_S \otimes_S S_i$ is free over $S_i$ for each $i$.
We may then apply Theorem~\ref{thm:BC-generalization} to $V_i$ to produce
$\D^{\dagger,s}_K(V_i)$ for $s$ sufficiently large.
By Proposition~\ref{prop:Kiehl}, these glue to form a finite
$(S\widehat{\otimes}_{\Qp} \mathbf{B}_K^{\dagger,s})$-module
$\D^{\dagger,s}_K(V_S)$, which satisfies the analogues of the
assertions of Theorem~\ref{thm:BC-generalization}.
We may then define
\[
\D_K^\dagger(V_S)=\D^{\dagger,s}(V_S)\otimes_{S \widehat{\otimes}_{\Qp} \bdag{,s}_K} (S\widehat{\otimes}_{\Qp} \bb^\dagger_K),
\]
and this will be an \'etale $\m$-module over
$S\widehat{\otimes}_{\Qp} \bb^\dagger_K$.
The analogue of Proposition~\ref{prop:phi=1} will also carry over.
\end{definition}

\begin{remark}
Ga\"etan Chenevier has pointed out that Theorem~\ref{thm:BC-generalization} is also an easy
consequence of \cite[Lemme~3.18]{C09}.
That lemma
implies that for $S$ an affinoid algebra over $\Qp$
and $V_S$ a locally free $S$-linear representation,
there exist an affine formal scheme $\mathrm{Spf}(R)$ of finite type over $\Z$
equipped with an isomorphism $R \otimes_{\Z} \Qp \cong S$, and a
locally free $R$-linear representation $T_R$ admitting an isomorphism
$T_R \otimes_{\Zp} \Qp \cong V_S$.
This makes it possible to glue the Berger-Colmez theorem by doing so
on a suitable formal model of $S$.
\end{remark}

\section{Local coefficient algebras}

We next show that in a restricted setting, it is possible to invert the
$\m$-module functor $D^\dagger_K$.
\begin{definition}
By a \emph{coefficient algebra}, we mean
a commutative Banach algebra $S$ over $\Q$
satisfying the following conditions.
\begin{itemize}
\item[(i)]
The norm on $S$ restricts to the norm on $\Q$.
\item[(ii)]
For each maximal ideal $\mathfrak{m}$ of $S$, the residue field of $\mathfrak{m}$
is finite over $\Q$.
\item[(iii)]
The Jacobson radical of $S$ is zero; in particular, $S$ is reduced.
\end{itemize}
For instance, any reduced affinoid algebra over $\Q$ is a coefficient
algebra.

By a \emph{local coefficient algebra}, we will mean a coefficient algebra $S$
of the form $R \otimes_{\Zp} \Qp$, where $R$ is a complete noetherian
local domain
of characteristic $0$ with residue field finite over $\mathbb{F}_p$.
For instance, if $S$ is a reduced affinoid algebra over $\Qp$
equipped with the spectral
norm, and $R$ is the completion of $\OO_S$ at a maximal ideal, then
$R \otimes_{\Zp} \Qp$ is a local coefficient algebra.
\end{definition}

One special property of local coefficient algebras is the following.
(Compare the discussion preceding Lemma~\ref{lem:phi=1}.)
\begin{proposition}\label{prop:c-algebra}
Let $R$ be a complete noetherian local domain
of characteristic $0$ with residue field finite over $\mathbb{F}_p$,
and let $S$ be the local coefficient algebra $R \otimes_{\Zp} \Qp$.
\begin{enumerate}
\item[(a)]
We may naturally identify
$(R \widehat{\otimes}_{\Zp} \aa_K) \otimes_{\Zp} \Qp$
with the $p$-adic completion
of $S \widehat{\otimes}_{\Qp} \bb^\dagger_K$.
\item[(b)]
We may naturally identify
$(R \widehat{\otimes}_{\Zp} \at_K) \otimes_{\Zp} \Qp$
with a subring of the $p$-adic completion of
$S \widehat{\otimes}_{\Qp} \bt^\dagger_K$.
\end{enumerate}
\end{proposition}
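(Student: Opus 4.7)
The plan for part (a) is to identify both sides as the $p$-adic completion of a common object, leveraging the fact that $\bb_K = \aa_K[1/p]$ and that $\bb_K$ is the $p$-adic completion of $\bb^\dagger_K$. I would first construct a natural map
\[
S \widehat{\otimes}_{\Qp} \bb^\dagger_K \longrightarrow (R \widehat{\otimes}_{\Zp} \aa_K) \otimes_{\Zp} \Qp
\]
from the inclusion $\bb^\dagger_K \hookrightarrow \bb_K = \aa_K[1/p]$ by functoriality of completed tensor products, checking continuity termwise on each Banach completion $S \widehat{\otimes}_{\Qp} \bb^{\dagger,s}_K$. Since $R \widehat{\otimes}_{\Zp} \aa_K$ is complete for the $(p + \mathfrak{m}_R)$-adic topology with $p \in \mathfrak{m}_R$, the $p$-adic topology on it is coarser and the target is already $p$-adically complete, so the map extends uniquely to the $p$-adic completion of the source.

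To prove this extended map is an isomorphism, I would verify density and injectivity separately. For density, use the isomorphism $\aa_K \cong \OO_{\calE_{K_0'}}$ to write a general element of $(R \widehat{\otimes}_{\Zp} \aa_K)[1/p]$ as a Laurent series $p^{-N}\sum_i c_i T^i$ with coefficients $c_i \in R \widehat{\otimes}_{\Zp} \OO_{K_0'}$ tending to zero $(p + \mathfrak{m}_R)$-adically as $i \to -\infty$. The partial sums have finite support and coefficients in $S \otimes_{\Qp} K_0' \subset S \widehat{\otimes}_{\Qp} \bb^{\dagger,s}_K$ (valid for every $s > 0$), while the tails are $p$-adically small, yielding density. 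Injectivity follows by reducing modulo powers of $p$ and $\mathfrak{m}_R$, where both sides identify with the same space of truncated Laurent series.

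Part (b) follows the same blueprint with $\at_K$ and $\bt^\dagger_K$ replacing $\aa_K$ and $\bb^\dagger_K$. The construction of the natural map and the injectivity argument carry over to yield an embedding $(R \widehat{\otimes}_{\Zp} \at_K)[1/p] \hookrightarrow$ ($p$-adic completion of $S \widehat{\otimes}_{\Qp} \bt^\dagger_K$). However, the density step fails: the ring $\at_K$ lacks the clean Laurent-series presentation enjoyed by $\aa_K$ (its elements are general $H_K$-fixed Witt vectors over $\et$, not Laurent series in a uniformizer), so no analogous truncation argument applies, and the target may contain ``wild'' elements coming from the $p$-adic completion that do not arise from $(R \widehat{\otimes}_{\Zp} \at_K)[1/p]$. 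Hence one obtains only the asserted subring identification.

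The principal technical obstacle is the continuity check in the first step, which requires comparing the Banach norm on $S \widehat{\otimes}_{\Qp} \bb^{\dagger,s}_K$ (combining the Banach norm on $S$ with the $w_s$-norm) with the $(p + \mathfrak{m}_R)$-adic topology on $R \widehat{\otimes}_{\Zp} \aa_K$. Two reductions are key: the inclusion $\aa^{\dagger,s}_K \hookrightarrow \aa_K$ is a contraction for the $p$-adic norm (so $w_s$-convergent sequences become $p$-adically convergent in $\aa_K$), and the Banach norm on $S$ is compatible with the $\mathfrak{m}_R$-adic topology on $R$ (so $S$-scalar convergence respects $\mathfrak{m}_R$-adic convergence). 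Together these yield the required continuity and push the argument through.
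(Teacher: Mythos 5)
Your strategy (build a natural map, then verify density and injectivity via Laurent-series expansions) differs from the paper's, which instead compares three completed tensor products $P_{1,n,s}$, $P_{2,n,s}$, $P_{3,n,s}$ of $(R/p^nR)\otimes(\aa^{\dagger,s}/p^n\aa^{\dagger,s})$ taken for different topologies on the two factors, and shows the comparison maps are bijections using the finiteness of $R/(p^nR+\mathfrak{m}_R^m)$ and the flatness of $\aa^{\dagger,s}/p^n\aa^{\dagger,s}$ over $\ZZ/p^n\ZZ$. Your route has a genuine gap at the density step. An element of $R\widehat{\otimes}_{\Zp}\aa_K$ is a series $\sum_i c_iT^i$ whose coefficients tend to zero as $i\to-\infty$ only for the $(p\aa_K+\mathfrak{m}_R)$-adic topology, not the $p$-adic one, so the tails $\sum_{i<-M}c_iT^i$ are \emph{not} $p$-adically small. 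Concretely, for $R=\Zp[[X]]$ and $K=\Qp$ the element $\sum_{i\leq 0}X^{-i}T^i$ lies in $R\widehat{\otimes}_{\Zp}\aa_K$ (the partial sums are Cauchy for the $(p,\mathfrak{m}_R)$-adic topology), yet every tail is nonzero modulo $p$; the finite partial sums therefore do not converge to it $p$-adically. What must be shown instead is that the entire tail already lies, modulo $p^n$, in the image of $\OO_S\widehat{\otimes}_{\Zp}\aa^{\dagger,s}_K$ for suitable $s$ --- equivalently, that the $w_s$-completion and the $\mathfrak{m}_R$-adic completion of $(R/p^n)\otimes(\aa^{\dagger,s}/p^n)$ coincide. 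That interchange of completions is exactly the content of the paper's proof, and your argument does not supply a substitute for it.

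Two further points. First, your continuity reduction is also incorrect: $w_s$-convergence to zero does not imply $p$-adic convergence to zero in $\aa_K$, since a Teichm\"uller lift $[\overline{x}]$ with $v_{\et}(\overline{x})$ large has arbitrarily large $w_s$ but $p$-adic valuation $0$; so the inclusion $\aa^{\dagger,s}_K\hookrightarrow\aa_K$ is not a contraction from the $w_s$-norm to the $p$-adic norm, and the existence of the natural map needs a different justification (in the paper this is again absorbed into the identification of the $P_{i,n,s}$). Second, for (b) your heuristic (``$\at_K$ has no Laurent presentation, so density fails'') points at the right phenomenon but is not the operative mechanism, and it does not by itself establish the injectivity that (b) asserts: in the paper the argument for (b) is identical to (a) except that the finiteness of $\at^{\dagger,s}/(p^n,J^m)$ fails, which degrades the bijection $P_{3,n,s}\to P_{2,n,s}$ to an injection while leaving $P_{1,n,s}\to P_{2,n,s}$ bijective, whence the subring statement.
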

\begin{proof}
Let $P_{1,n,s}, P_{2,n,s}, P_{3,n,s}$ denote the completed tensor products
$(R/p^n R) \widehat{\otimes}_{\Zp} (\aa^{\dagger,s}/p^n \aa^{\dagger,s})$
formed using the following choices for the topologies on the two sides.
\begin{itemize}
\item
For $P_{1,n,s}$, use on the left side the discrete topology, and on the right side the
topology induced by $w_s$.
\item
For $P_{2,n,s}$, use on the left side the topology induced by $\mathfrak{m}_R$, and on the right side the
topology induced by $w_s$.
\item
For $P_{3,n,s}$, use on the left side the topology induced by $\mathfrak{m}_R$, and on the right side the
discrete topology.
\end{itemize}
These constructions relate to our original question as follows. If we take the inverse limit of
the $P_{1,n,s}$ as $n  \to \infty$, then invert $p$,
then take the union over all choices of $s$, we recover $S \widehat{\otimes}_{\Qp} \bb^\dagger$.
If we take the union of the $P_{3,n,s}$ over all choices of $s$, then take the inverse limit as $n \to \infty$,
and finally invert $p$, we recover $(R \widehat{\otimes}_{\Zp} \aa) \otimes_{\Zp} \Qp$.

To establish (a), it thus suffices to check that the natural maps $P_{1,n,s} \to P_{2,n,s}$ and $P_{3,n,s} \to P_{2,n,s}$ are both
bijections. Put $A = \mathfrak{m}_R (R/p^n R)$ and $I = \mathfrak{m}_R A$. Put
$B = \aa^{\dagger,s}/p^n \aa^{\dagger,s}$ and choose an ideal of definition $J \subseteq B$
for the topology induced
by $w_s$. In this notation, $A$ is $I$-adically complete and separated, $B$ is
$J$-adically complete and separated, and both $A$ and $B$ are flat over
$\Z/p^n \Z$. Put $C = A \otimes_{\Z/p^n\Z} B$.
The $IC$-adic completion of $C$ is then the inverse limit over $m$ of the quotients
$C/I^m C = (A/I^m A) \otimes_{\Z/p^n\Z} B$.
Since $B$ is flat over $\Z/p^n \Z$ and $A/I^m A$ has finite cardinality,
the completeness of $B$ with respect to $J$ implies the completeness of
$C/I^m C$ with respect to $J(C/I^m C)$. It follows that $C$ is complete with respect
to $IC + JC$, which means that $P_{1,n,s} \to P_{2,n,s}$ is a bijection.
Similarly, we may argue that $P_{3,n,s} \to P_{2,n,s}$ is bijective using the fact that
$B/J^m B$ is of finite cardinality.

This yields (a). The whole argument carries over in the case of (b) except for the finiteness
of $B/J^m B$; hence in this case, we only have that $P_{1,n,s} \to P_{2,n,s}$ is a bijection
and $P_{3,n,s} \to P_{2,n,s}$ is injective.
\end{proof}

\begin{theorem}\label{thm:fg-convert-coefficient}
Let $S$ be a local coefficient algebra.
Let $M_S$ be an \'etale $\m$-module over
$S\widehat{\otimes}_{\Qp} \bb^\dagger_{K}$,
and put
\[
V_S=(M_S \otimes_{S\widehat{\otimes}_{\Qp} \bb^\dagger_{K}}
(S\widehat{\otimes}_{\Qp} \bt^\dagger))^{\varphi=1}.
\]
Then $V_S$ is an $S$-linear representation for which
the natural map $\D^\dagger_{K}(V_S) \to M_S$ is an isomorphism.
\end{theorem}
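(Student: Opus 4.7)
The plan is to reduce the theorem to Dee's theorem (Theorem 2.3) via $p$-adic completion, with Proposition 4.2 serving as the algebraic bridge between the overconvergent ring $S \widehat{\otimes}_{\Qp} \bb^\dagger_K$ and the classical Dee ring $(R \widehat{\otimes}_{\Zp} \aa_K) \otimes_{\Zp} \Qp$.

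First, using the étale hypothesis, I would choose a finite $(\varphi,\Gamma)$-stable $(\OO_S \widehat{\otimes}_{\Zp} \aa^\dagger_K)$-submodule $N_S$ of $M_S$ such that $\varphi^* N_S \to N_S$ is an isomorphism and $N_S$ generates $M_S$ after base change. Taking its $p$-adic completion and invoking Proposition 4.2(a) identifies the result, after inverting $p$, with a finitely generated étale $(\varphi,\Gamma)$-module $\widehat{N}_R \otimes_{\Zp} \Qp$ over $(R \widehat{\otimes}_{\Zp} \aa_K) \otimes_{\Zp} \Qp$, where $\widehat{N}_R$ is an $R \widehat{\otimes}_{\Zp} \aa_K$-lattice therein. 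Dee's theorem then produces an $R$-linear Galois representation $T_R$ with $\D(T_R) = \widehat{N}_R$, and I set $V_S := T_R \otimes_{\Zp} \Qp$, which is automatically an $S$-linear representation of $G_K$.

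To match this $V_S$ with the one defined in the statement, I would apply the quasi-inverse formula from Dee, namely
\[
T_R = \bigl((R \widehat{\otimes}_{\Zp} \aa) \otimes_{R\widehat{\otimes}_{\Zp} \aa_K} \widehat{N}_R\bigr)^{\varphi=1}.
\]
After inverting $p$ and invoking Proposition 4.2(b) to embed $(R \widehat{\otimes}_{\Zp} \at) \otimes_{\Zp} \Qp$ into the $p$-adic completion of $S \widehat{\otimes}_{\Qp} \bt^\dagger$, one gets
\[
V_S = \bigl(M_S \otimes_{S \widehat{\otimes}_{\Qp} \bb^\dagger_K} (S \widehat{\otimes}_{\Qp} \bt^\dagger)\bigr)^{\varphi=1},
\]
using that $\varphi$-invariants can be computed equivalently before or after $p$-adic completion (in the spirit of Lemma 2.6). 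For the final claim, since $T_R$ provides a Galois-stable lattice in $V_S$, Berger--Colmez (Theorem 2.5, or Theorem 3.11 for the locally free case) applies to produce $\D^\dagger_K(V_S)$ as an étale $(\varphi,\Gamma)$-module over $S \widehat{\otimes}_{\Qp} \bb^\dagger_K$. Both $\D^\dagger_K(V_S)$ and $M_S$ have the same $p$-adic completion, namely $\widehat{N}_R \otimes_{\Zp} \Qp$, and both are étale overconvergent; a Cherbonnier--Colmez style uniqueness argument (using that the étale overconvergent submodule inside the common $p$-adic completion is unique) forces the natural map $\D^\dagger_K(V_S) \to M_S$ to be an isomorphism.

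The main obstacle is the first step: one must verify rigorously that $p$-adic completion preserves finite generation and the étale $(\varphi,\Gamma)$-structure, and that descending back along the overconvergent embedding recovers $M_S$ itself rather than some enlargement. Proposition 4.2 provides the ring-level compatibility, but the module-level care involves managing the interaction between $\OO_S$ (the unit ball for the Banach norm) and the intrinsic integral structure $R$ inside $S = R \otimes_{\Zp} \Qp$, and ensuring that $\varphi$-invariants commute with the relevant $p$-adic completions so that the candidate $V_S$ coming from Dee's theorem matches the one defined in the statement.
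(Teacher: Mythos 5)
Your overall strategy coincides with the paper's: pass to the $p$-adic completion, identify it via Proposition~4.2(a) with $(R\widehat{\otimes}_{\Zp}\aa_K)\otimes_{\Zp}\Qp$, apply Dee's theorem there to get a representation, and transport the answer back. The genuine gap is the step you defer to your final paragraph as ``the main obstacle'': showing that the representation $V'_S$ that Dee's theorem produces inside the $p$-adically completed module is actually contained in $V_S=(M_S\otimes_{S\widehat{\otimes}_{\Qp}\bb^\dagger_K}(S\widehat{\otimes}_{\Qp}\bt^\dagger))^{\varphi=1}$, i.e.\ that the $\varphi$-invariant vectors are overconvergent. Appealing to ``the spirit of Lemma~2.6'' does not do this: Lemma~2.6 computes $(S\widehat{\otimes}_{\Qp}\bt^\dagger)^{\varphi=1}=S$ and says nothing about descending invariants from a $p$-adic completion; indeed the paper explicitly warns that $S\widehat{\otimes}_{\Qp}\bb^\dagger_K$ need not even inject into $S\widehat{\otimes}_{\Qp}\bb_K$, so ``$\varphi$-invariants commute with completion'' is exactly the delicate point, not a formality. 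The paper settles the containment $V'_S\subseteq V_S$ by running the quantitative argument of \cite[Proposition~1.2.7]{K08} through the embedding of Proposition~4.2(b); some such overconvergence argument must be supplied, and without it the proof is incomplete at its only substantive analytic step.

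Your endgame is also not quite what is available. There is no family version of the Cherbonnier--Colmez uniqueness of the maximal \'etale overconvergent submodule (Theorem~2.3 is stated for a single $\Qp$-linear representation), so invoking ``a Cherbonnier--Colmez style uniqueness argument'' to identify $\D^\dagger_K(V_S)$ with $M_S$ begs the question. The paper instead deduces the isomorphism $\D^\dagger_K(V_S)\to M_S$ directly from the containment $V'_S\subseteq V_S$ together with the comparison isomorphism furnished by Dee's theorem after base change to $(R\widehat{\otimes}_{\Zp}\at)\otimes_{\Zp}\Qp$; once the overconvergence step above is in place, that route closes the argument without any uniqueness principle.
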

\begin{proof}
By Proposition~\ref{prop:c-algebra}(a), we may identify the $p$-adic completion of
$S\widehat{\otimes}_{\Qp} \bb^\dagger_{K}$
with $(R \widehat{\otimes}_{\Zp} \aa) \otimes_{\Zp} \Qp$. This allows us to define
\[
V'_S = (M_S \otimes_{S\widehat{\otimes}_{\Qp} \bb^\dagger_{K}}
((R \widehat{\otimes}_{\Zp} \at) \otimes_{\Zp} \Qp))^{\varphi=1}.
\]
By Theorem~\ref{thm:Dee}, the natural map
\[
V'_S \otimes_{(R \widehat{\otimes}_{\Zp} \aa) \otimes_{\Zp} \Qp}
((R \widehat{\otimes}_{\Zp} \at) \otimes_{\Zp} \Qp)
\to M_S \otimes_{S\widehat{\otimes}_{\Qp} \bb^\dagger_{K}}
((R \widehat{\otimes}_{\Zp} \at) \otimes_{\Zp} \Qp)
\]
is an isomorphism.

By Proposition~\ref{prop:c-algebra}(b), we may identify
$(R \widehat{\otimes}_{\Zp} \at) \otimes_{\Zp} \Qp$ with a subring of
the $p$-adic completion of $S\widehat{\otimes}_{\Qp} \bt^\dagger$.
Using this identify, we may argue as in
\cite[Proposition~1.2.7]{K08} to show that $V'_S \subseteq V_S$,
which is enough to establish the desired result.
\end{proof}

\section{A lifting argument}

While one cannot invert the functor $\D^\dagger_K$ for an arbitrary $S$,
one can give a partial result.
\begin{lemma}\label{lem:dejong}
For any commutative Banach algebra $S$ over $\Qp$, any $s>0$, and any
$x \in S \widehat{\otimes}_{\Qp} \bt^{\dagger,s}$, the equation
\[
y - \varphi^{-1}(y) = x
\]
has a solution $y \in S \widehat{\otimes}_{\Qp} \bt^{\dagger,s}$.
More precisely,
we may choose $y$ such that $v_p(y) \geq v_p(x)$
and $w_s(y) \geq w_s(x)$.
\end{lemma}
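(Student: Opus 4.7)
The plan is to reduce to $S = \Qp$ via the Schauder-basis argument from Lemma~2.6, and then to construct $y$ by successive approximation modulo powers of $p$ using Artin--Schreier theory in the residue ring.

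\textbf{Reduction.} The Schauder basis identifies $S \widehat{\otimes}_{\Qp} \bt^{\dagger,s}$ with $l^\infty_0(I, \bt^{\dagger,s})$, on which $\varphi$ acts coordinate-wise; both $v_p$ and $w_s$ extend to the supremum valuation on the $l^\infty_0$-sum. A coordinate-wise solution with uniform bounds therefore assembles into a solution on the whole space, so it suffices to treat $S = \Qp$.

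\textbf{Construction for $S = \Qp$.} After multiplying by a power of $p$, assume $x \in \at^{\dagger,s}$. Reducing modulo $p$ and taking the $0$-th Witt coordinate transforms the equation into $\bar y - \bar y^{1/p} = \bar x_0$ in $\et$, which upon substituting $\bar z = \bar y^{1/p}$ (legitimate since $\et$ is perfect) becomes the Artin--Schreier equation $\bar z^p - \bar z = \bar x_0$. Because $\et$ is algebraically closed of characteristic $p$, a solution $\bar y \in \et$ exists, and a Newton-polygon analysis shows we can choose $\bar y$ with $v_\et(\bar y) \geq v_\et(\bar x_0)$ (with strict inequality when $v_\et(\bar x_0) > 0$). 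I then take the Teichm\"uller lift $y^{(0)} := [\bar y] \in \at^{\dagger,s}$, which satisfies $w_s(y^{(0)}) = v_\et(\bar y) \geq v_\et(\bar x_0) \geq w_s(x)$ and $v_p(y^{(0)}) = 0$. The residual $e_0 := x - y^{(0)} + \varphi^{-1}(y^{(0)})$ has vanishing $0$-th Witt coordinate, so $e_0 = p \tilde e_0$ for some $\tilde e_0 \in \bt^{\dagger,s}$; iterating produces corrections $y^{(n)}$ with $v_p(p^n y^{(n)}) \geq n$, and $y := \sum_{n \geq 0} p^n y^{(n)}$ converges $p$-adically.

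\textbf{Main obstacle.} The delicate step is propagating the $w_s$-bound through the iteration: each $p$-division can decrease $w_s$ by up to $ps/(p-1)$, so a naive argument would leak $w_s$-valuation at every stage. The crucial claim is that the $w_s$-gain guaranteed by the Newton-polygon bound on $\bar y^{(n)}$ at stage $n$ exactly compensates the $p$-division loss, so that $w_s(p^n y^{(n)}) \geq w_s(x)$ uniformly in $n$. Establishing this amounts to a careful Witt-vector computation matching the two effects. It may be cleaner to first dispose of $x \in \btplus$ via the explicit formula $y = -\sum_{n \geq 1} \varphi^n(x)$, which converges in $w_s$-norm whenever $v_\et(x_k) > 0$ for every Witt coordinate, and then to reduce the general case to this one by an initial Artin--Schreier correction that absorbs the boundary contributions with $v_\et(x_k) = 0$.
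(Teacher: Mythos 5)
Your overall strategy --- reduce to $S=\Qp$ by solving termwise with uniform bounds, then solve over $\Qp$ via Artin--Schreier equations in $\et$ applied to the Teichm\"uller coordinates --- is the same as the paper's. But there is a genuine gap exactly where you flag ``the main obstacle'': you never establish that the $w_s$-valuation survives the iteration, and without that the argument does not even show $y \in \bt^{\dagger,s}$. Membership in $\bt^{\dagger,s}$ (as opposed to $\bt$) requires a lower bound on $v_{\et}(y_k) + psk/(p-1)$ over all $k$, so a $p$-adically convergent series of corrections whose $w_s$-valuations drift to $-\infty$ would exit the ring entirely. Declaring that ``establishing this amounts to a careful Witt-vector computation'' and then proposing an alternative route that is itself only sketched (and, as you note, only handles $x$ with strictly positive coordinate valuations) does not close the gap; the compensation between the Newton-polygon gain and the loss from dividing by $p$ is the entire quantitative content of the lemma.

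The paper closes this gap by choosing the right induction hypothesis on the full Teichm\"uller expansion $y = \sum_k p^k[y_k]$: for every $k$,
\[
\inf\{v_{\et}(y_\ell) : \ell \le k\} \;\ge\; \inf\{v_{\et}(x_\ell) : \ell \le k\}.
\]
This cumulative form is strong enough to absorb both the Witt-vector carries (which at level $k$ involve only the $y_\ell, x_\ell$ with $\ell < k$) and the effective division by $p$, and it immediately yields $w_s(y)\ge w_s(x)$ because $\ell\le k$ gives $ps\ell/(p-1)\le psk/(p-1)$. The input that makes the induction run is precisely the Artin--Schreier valuation bound you state: a solution of $\bar y - \bar y^{1/p} = \bar x$ exists with $v_{\et}(\bar y)\ge v_{\et}(\bar x)$ when $v_{\et}(\bar x)\le 0$ and $v_{\et}(\bar y)\ge p\,v_{\et}(\bar x)$ when $v_{\et}(\bar x)\ge 0$. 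If you recast your stage-$n$ estimate as a statement about $\inf_{\ell\le k} v_{\et}$ of all coordinates up to level $k$, rather than trying to bound $w_s(p^n y^{(n)})$ for each correction separately, the bookkeeping closes and your argument becomes the paper's.
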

\begin{proof}
For $S = \Qp$, the existence of a solution $y \in \bt$ follows from
the fact that $\bt$ is a complete discretely valued field with
algebraically closed residue field.
Write $x = \sum_k p^k [x_k]$ and $y = \sum_k p^k [y_k]$.
We claim that $y$ can be chosen such that for each $k$,
\[
\inf\{v_{\et}(y_\ell): \ell \leq k\}
\geq
\inf\{v_{\et}(x_\ell): \ell \leq k\},
\]
which yields the desired results.
This choice can be made
because for any $\overline{x} \in \et$, the equation
$\overline{y} - \overline{y}^{1/p} = \overline{x}$ always has a solution
$\overline{y} \in \et$ with
\[
v_{\et}(\overline{y}) \geq \begin{cases}
v_{\et}(\overline{x}) & v_{\et}(\overline{x}) \leq 0 \\
p v_{\et}(\overline{x}) & v_{\et}(\overline{x}) \geq 0.
\end{cases}
\]

For general $S$, write $x$ as a convergent sum $\sum_i u_i \otimes x_i$
with $u_i \in S$ and $x_i \in \bt^{\dagger,s}$. For each $i$, let
$y_i \in \bt^{\dagger,s}$ be a solution of $y_i - \varphi^{-1}(y_i) = x_i$
with $w_s(y_i) \geq w_s(x_i)$. Then
the sum $y = \sum_i u_i \otimes y_i$ converges
with the desired effect.
\end{proof}

\begin{theorem}\label{thm:fg-convert}
Let $S$ be a commutative Banach algebra over $\Qp$.
Let $M_S$ be a free \'etale $\m$-module over
$S\widehat{\otimes}_{\Qp} \bb^\dagger_{K}$.
Suppose that there exists a basis of $M_S$ on
which $\varphi-1$ acts via a matrix whose entries have positive $p$-adic
valuation.
Then
\[
V_S=(M_S \otimes_{S\widehat{\otimes}_{\Qp} \bb^\dagger_{K}}
(S\widehat{\otimes}_{\Qp} \bt^\dagger))^{\varphi=1}
\]
is a free $S$-linear representation for which
the natural map $\D^\dagger_{K}(V_S) \to M_S$ is an isomorphism.
\end{theorem}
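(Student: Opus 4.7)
My plan is to use Lemma~5.1 iteratively to construct a basis of $M_S \otimes_{S\widehat{\otimes}_{\Qp}\bb^\dagger_K}(S\widehat{\otimes}_{\Qp}\bt^\dagger)$ on which $\varphi$ acts trivially, and then to deduce the remaining claims of the theorem in a formal way.

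For the construction, let $e_1,\ldots,e_d$ denote the hypothesized basis of $M_S$ and write the matrix of $\varphi$ in this basis as $A = I + C$ with $v_p(C) \geq \epsilon$ for some $\epsilon > 0$. I seek an invertible matrix $U = I + X$ over $S\widehat{\otimes}_{\Qp}\bt^{\dagger,s}$ (for some $s > 0$) satisfying $\varphi(U) = A^{-1} U$; applying $\varphi^{-1}$ to both sides, this is equivalent to
\[
X - \varphi^{-1}(X) \;=\; \varphi^{-1}(A^{-1}-I)\bigl(I + \varphi^{-1}(X)\bigr).
\]
I would iterate from $X_0 = 0$, at each stage solving the linearized equation $\Delta_n - \varphi^{-1}(\Delta_n) = -E_n$ (with $E_n$ the current residual) by applying Lemma~5.1 entrywise, and putting $X_{n+1} = X_n + \Delta_n$. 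A direct calculation gives $E_{n+1} = -\varphi^{-1}(A^{-1}-I)\,\varphi^{-1}(\Delta_n)$, so that $v_p(E_{n+1}) \geq \epsilon + v_p(E_n)$; thus $X_n$ converges $p$-adically to some $X$ with $v_p(X) \geq \epsilon$, whence $U = I + X$ is invertible by the Neumann series. The $w_s$-bound in Lemma~5.1 yields analogous control on the radii, at worst forcing one to shrink $s$ along the way, which is harmless since $\bt^\dagger = \bigcup_{s > 0} \bt^{\dagger,s}$.

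Once the $\varphi$-fixed basis $f_j = \sum_i U_{ij}\,e_i$ is in hand, Lemma~2.6 applied to the coordinates of an arbitrary element in this basis identifies $V_S$ with the free $S$-module $\bigoplus_i S\,f_i$. The continuous $G_K$-action on $M_S \otimes (S\widehat{\otimes}_{\Qp}\bt^\dagger)$ (diagonal, with $H_K$ trivial on $M_S$ and $G_K$ acting on $\bt^\dagger$) commutes with $\varphi$, hence preserves $V_S$, giving it the structure of a continuous free $S$-linear representation of $G_K$; compactness of $G_K$ then produces a $G_K$-stable free $\OO_S$-lattice, so Theorem~2.5 applies and $\D^\dagger_K(V_S)$ is defined. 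For the final comparison, the identification $V_S \otimes (S\widehat{\otimes}_{\Qp}\bt^\dagger) \cong M_S \otimes (S\widehat{\otimes}_{\Qp}\bt^\dagger)$ coming from the $f_i$, combined with Theorem~2.5(2), exhibits both $\D^\dagger_K(V_S)$ and $M_S$ as \'etale $\m$-modules of rank $d$ inside this common $\bt^\dagger$-module; a fiberwise argument (Theorem~2.5(3) together with classical Cherbonnier--Colmez uniqueness of the maximal \'etale $\m$-submodule at each maximal ideal) forces the natural map $\D^\dagger_K(V_S) \to M_S$ to be an isomorphism.

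The main obstacle is the bookkeeping in the iteration: while $p$-adic convergence is immediate from the hypothesis on $\varphi - 1$, one must simultaneously control the $w_s$-valuation, since $\varphi^{-1}$ shifts $\bt^{\dagger,s}$ into $\bt^{\dagger,s/p}$. The parallel $w_s$-bound in Lemma~5.1 is exactly what is needed to absorb this. A secondary technical point is the descent from the $\bt^\dagger$-level identification to the $\bb^\dagger_K$-level isomorphism of $\m$-modules, which I handle by reducing to classical Cherbonnier--Colmez at each rigid point.
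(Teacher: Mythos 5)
Your proposal is correct and follows essentially the same route as the paper: the paper likewise iterates Lemma~5.1, gaining a factor of $c$ in $p$-adic valuation at each step while keeping $w_s$ bounded below, and packages the corrections as a convergent infinite product $(I_n+X_1)(I_n+X_2)\cdots$ defining a $\varphi$-fixed basis (your telescoping sum $X_{n+1}=X_n+\Delta_n$ is the same device), with convergence obtained by passing from $s$ to some $s'>s$, exactly the point you flag. The concluding identification of $V_S$ and of $\D^\dagger_K(V_S)\to M_S$, which the paper leaves implicit, is handled correctly by your appeal to Lemma~2.6 and Theorem~2.5.
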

\begin{proof}
Choose a basis of
$M'_S = M_S \otimes_{S\widehat{\otimes}_{\Qp} \bb^\dagger_{K}}
(S\widehat{\otimes}_{\Qp} \bt^\dagger)$
on which $\varphi-1$ acts via a matrix $A$ whose
entries belong to $S \widehat{\otimes}_{\Qp} \bb^{\dagger,s}_K$ for some $s>0$
and have $p$-adic valuation bounded below by $c>0$.
We may apply Lemma~\ref{lem:dejong} to choose a matrix $X$ such that
$X$ has entries in $S \widehat{\otimes}_{\Qp} \bb^{\dagger,s}_K$
with $p$-adic valuation bounded below by $c$,
$\min_{i,j} \{w_s(X_{i,j})\} \geq \min_{i,j} \{w_s(A_{i,j})\}$,
and $X - \varphi^{-1}(X) = A$. We can thus change basis to get a
new basis of $M'_S$ on which $\varphi-1$ acts via the matrix
\[
(I_n - \varphi^{-1}(X))^{-1}(I_n + A)(I_n - X) - I_n,
\]
whose entries have valuation bounded below by $2c$.
If we repeat this process, we get a sequence of matrices $X_1, X_2, \dots$
such that $w_s(X_i)$ is bounded below, and the $p$-adic valuation of $X_i$
is at least $ci$. It follows that $w_{s'}(X_i)$ tends to infinity
for any $s' > s$, so the product
$(I_n + X_1)(I_n + X_2) \cdots$ converges in
$S \widehat{\otimes}_{\Qp} \bb^{\dagger,s'}_K$ and defines a basis of $M'_S$ fixed by
$\varphi$.
This proves the claim.
\end{proof}

\begin{remark}
The hypothesis about the basis of $M_S$ is needed in Theorem~\ref{thm:fg-convert}
for the following reason. For $R$ an arbitrary $\mathbb{F}_p$-algebra,
if $\varphi$ acts as the identity on $R$ and as the $p$-power Frobenius
on $\et$, given an invertible square matrix $A$ over
$R \otimes_{\mathbb{F}_p} \et$,
we cannot necessarily solve the matrix equation $U^{-1} A \varphi(U) = A$
for an invertible matrix $U$ over $R \otimes_{\mathbb{F}_p} \et$.
For instance, in Chenevier's example, there is no
solution of the equation $\varphi(z) = Yz$.

One may wish to view the collection of isomorphism classes
of $\m$-modules over $R \otimes_{\mathbb{F}_p} \mathbb{F}_p((\epsilon - 1))$,
for $R$ an $\mathbb{F}_p$-algebra, as the ``$R$-valued points of the
moduli space of
mod $p$ representations of $G_{\Qp}$''. To replace $\Qp$ with $K$,
one should replace $\mathbb{F}_p((\epsilon - 1))$
with the $H_K$-invariants of its separable closure.
\end{remark}

\section{Families of $\m$-modules and \'etale models}

We would like to turn next from  $\m$-modules over
$S \widehat{\otimes}_{\Qp} \bb^\dagger_{K}$
to $\m$-modules over
$S \widehat{\otimes}_{\Qp} \bb^\dagger_{\rig,K}$.
In the absolute case, these have important applications
to the study of de Rham representations, as shown by Berger;
see for instance \cite{B04}.
In the relative case, however, they do not form a robust enough category
to be useful; it is better to pass to a more geometric notion.
For this, we must restrict to the case
where $S$ is an affinoid algebra.

\begin{definition}
Let $K$ be a finite extension of $\Qp$,
and let $S$ be an affinoid algebra over $K$.
Recall that $\calR^s_{K}$ denotes the ring
of Laurent series with coefficients in $K$ in a variable $T$ convergent
on the annulus $0 < v_p(T) \leq 1/s$, and that $\calR_{K}=\cup_{s>0}\calR^s_{K}$.
By a \emph{vector bundle} over $S \widehat{\otimes}_{K} \calR^s_{K}$,
we will mean a coherent locally free sheaf over the product
of this annulus with $M(S \otimes_{K} K)$
in the category of rigid analytic spaces over $K$.
(In case $S$ is disconnected, we insist that the rank be constant,
not just locally constant.)
By a vector bundle over $S \widehat{\otimes}_{K}
\calR_{K}$, we will mean an object in the direct limit
as $s \to \infty$ of the categories of vector bundles over
$S \widehat{\otimes}_{\Qp} \calR^s_{K}$.

Recall that for $s$ sufficiently large,
we can produce an isomorphism $\bb^{\dagger,s}_{\rig,K} \cong
\calR^s_{K'_0}$. We thus obtain the notion of a vector bundle
over $S \widehat{\otimes}_{\Qp} \bb^{\dagger,s}_{\rig,K}$,
dependent on the choice of the isomorphism. However,
the notion of a vector bundle
over $S \widehat{\otimes}_{\Qp} \bb^{\dagger}_{\rig,K}$
does not depend on any choices.
\end{definition}

\begin{remark}\label{rem:freeness}
For $S = K$ discretely valued, every vector bundle over $\calR^s_{K}$ is freely
generated by global sections
\cite[Theorem~3.4.1]{K05a}.
On the other hand, for $S$ an affinoid algebra over $\Qp$,
we do not know whether
any vector bundle over $S \widehat{\otimes}_{\Qp} \r_K^s$ is $S$-locally free;
this does not follow from the work of L\"utkebohmert
\cite{L77}, which only applies to closed annuli.
\end{remark}

\begin{definition}
Let $K$ be a finite extension of $\Qp$,
and let $S$ be an affinoid algebra over $\Qp$.
By a \emph{family of $\m$-modules} over $S \widehat{\otimes}_{\Qp}
\bb^{\dagger}_{\rig,K}$, we will mean a vector bundle $V$ over
$S \widehat{\otimes}_{\Qp} \bb^{\dagger}_{\rig,K}$
equipped with an isomorphism $\varphi^* V \to V$,
viewed as a semilinear $\varphi$-action,
and a semilinear $\Gamma$-action commuting with the $\varphi$-action.
We say a family of $\m$-modules over $S \widehat{\otimes}_{\Qp}
\bb^{\dagger}_{\rig,K}$ is \emph{\'etale} if it arises by base extension
from an \'etale $\m$-module over $S \widehat{\otimes}_{\Qp}
\bb^{\dagger}_{K}$; we call the latter an \emph{\'etale model} of the family.
\end{definition}

It turns out that \'etale models are unique when they exist.
To check this without any reducedness hypothesis on $S$,
we need a generalization of the fact that a reduced affinoid algebra
embeds into a product of complete fields \cite[Proposition~2.4.4]{B90}.
\begin{lemma}\label{lem:affinoid-inclusion}
Let $K$ be a finite extension of $\Qp$,
and let $S$ be an affinoid algebra over $\Qp$.
Then there exists a strict inclusion
$S \to \prod_{i=1}^n A_i$ of topological rings, in which each
$A_i$ is a finite connected algebra over a complete discretely valued field.
\end{lemma}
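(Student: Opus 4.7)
The plan is to combine a primary decomposition (to strip off the non-reduced structure) with Noether normalization and a completion argument to reach a complete discretely valued field. Since $S$ is noetherian, we have a minimal primary decomposition $(0) = \mathfrak{q}_1 \cap \cdots \cap \mathfrak{q}_n$ with each $\mathfrak{q}_i$ being $\mathfrak{p}_i$-primary. All ideals in an affinoid algebra are closed (by \cite[Proposition~3.7.2/2]{BGR84}), so each $S/\mathfrak{q}_i$ is a $\Qp$-Banach algebra, and the diagonal map $S \hookrightarrow \prod_i S/\mathfrak{q}_i$ is a strict inclusion of topological rings. The $\mathfrak{p}_i$-primary condition makes every element of $S \setminus \mathfrak{p}_i$ a non-zerodivisor modulo $\mathfrak{q}_i$, so the localization map $S/\mathfrak{q}_i \hookrightarrow B_i := S_{\mathfrak{p}_i}/\mathfrak{q}_i S_{\mathfrak{p}_i}$ is injective; since some power of $\mathfrak{p}_i$ lies in $\mathfrak{q}_i$, the local ring $B_i$ is Artinian, hence a finite-dimensional vector space over its residue field $L_i := \mathrm{Frac}(S/\mathfrak{p}_i)$.

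Next I would embed each $L_i$ into a finite product of complete discretely valued fields. By Noether normalization applied to the reduced affinoid domain $S/\mathfrak{p}_i$, there is a finite injection of a Tate algebra $T_{d_i} \hookrightarrow S/\mathfrak{p}_i$, which induces a finite field extension $K_i := \mathrm{Frac}(T_{d_i}) \hookrightarrow L_i$. The Gauss valuation on $K_i$ has value group $|\Qp^\times|$ and so is discrete; let $\widehat{K}_i$ denote the corresponding completion, a complete discretely valued field. Since $L_i/K_i$ is finite and (being characteristic zero) separable, the base change $L_i \otimes_{K_i} \widehat{K}_i$ decomposes as a product $\prod_j L_{i,j}$ of finite field extensions of $\widehat{K}_i$, each of which is again complete discretely valued. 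Faithful flatness over the field $K_i$ gives the embedding $L_i \hookrightarrow \prod_j L_{i,j}$, and tensoring this over $L_i$ with the free finite $L_i$-module $B_i$ yields $B_i \hookrightarrow \prod_j A_{i,j}$ with $A_{i,j} := B_i \otimes_{L_i} L_{i,j}$. Each $A_{i,j}$ inherits from $B_i$ the property of being a local (hence connected) Artinian ring, and is finite over $L_{i,j}$.

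Composing these inclusions delivers $S \hookrightarrow \prod_{i,j} A_{i,j}$ with the required algebraic shape. The step I expect to require the most care is verifying that the composition is \emph{strict} as a map of topological rings. The first component, arising from primary decomposition, is strict via the closedness of the $\mathfrak{q}_i$ and the open mapping theorem. The subtler part is to show that the natural Banach topology on $A_{i,j}$, arising from its structure as a finite-dimensional $L_{i,j}$-algebra, restricts on the image of $S/\mathfrak{q}_i$ to the topology inherited from the affinoid norm on $S$. I expect this to follow by comparing the Gauss seminorm on $K_i = \mathrm{Frac}(T_{d_i})$ with the spectral seminorm on $S/\mathfrak{p}_i$ under the Noether normalization, and checking that the $\widehat{K}_i$-adic completion is harmless on the finite quotients $B_i$; equivalently, one can replace the Noether normalization step by the Berkovich embedding \cite[Proposition~2.4.4]{B90} applied to each $S/\mathfrak{p}_i$, which already produces an \emph{isometric} inclusion into a product of complete discretely valued fields (since $S$ is strict $\Qp$-affinoid, its Shilov points are of type~II), and then propagate this isometry through the flat tensor product with $B_i$.
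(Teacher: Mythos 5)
Your architecture --- primary decomposition to strip off the nilpotents, then Noether normalization and completion at the Gauss point to reach complete discretely valued fields --- is genuinely different from the paper's, and the target algebras $A_{i,j}$ you produce are, in examples, the same ones the paper obtains. But there is a real gap exactly where you flag one, and neither of your proposed repairs closes it. The problem is the second stage $S/\mathfrak{q}_i \to B_i \to \prod_j A_{i,j}$. The topology you put on $A_{i,j}$ is the intrinsic one coming from finite-dimensionality over $L_{i,j}$; it is only defined after choosing a coefficient field $L_i \hookrightarrow B_i$ (which exists in characteristic $0$ but is not canonical, and which you never mention), and it has no a priori relation to the Banach topology of $S/\mathfrak{q}_i$, because the intermediate ring $B_i = (S/\mathfrak{q}_i)_{\mathfrak{p}_i}$ is reached by a purely algebraic localization carrying no topology. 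It is not even clear that $S/\mathfrak{q}_i \to A_{i,j}$ is \emph{continuous}: the coordinates of $s$ with respect to an $L_i$-basis of $B_i$ are elements of $L_i$ whose denominators you have not controlled. Both suggested fixes --- comparing Gauss and spectral seminorms under Noether normalization, or invoking Berkovich's embedding --- are statements about the reduced quotient $S/\mathfrak{p}_i$ only; ``propagating the isometry through the flat tensor product with $B_i$'' is precisely the assertion to be proved, since tensoring the reduced embedding with $B_i$ over $L_i$ says nothing about how the nilpotent part of $S/\mathfrak{q}_i$ sits topologically inside $A_{i,j}$. And the non-reduced case is the entire content of the lemma, as the reduced case is already \cite[Proposition~2.4.4]{B90}. (A smaller point: strictness of the first map $S \to \prod_i S/\mathfrak{q}_i$ needs the image of the diagonal to be closed in the product, which follows from closedness of submodules of finite Banach modules over affinoid algebras, not merely from closedness of each $\mathfrak{q}_i$ plus the open mapping theorem.)

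The paper's proof sidesteps all of this with a one-step construction that never leaves the topological category: let $T$ be the multiplicative set of $t \in \OO_S$ whose image in $\OO_S/I_S$ is a non-zerodivisor; then $|st| = |s|\,|t|$ for all $s \in S$ and $t \in T$, so the norm extends to the localization $S[T^{-1}]$, and the completion of $S[T^{-1}]$ contains $S$ isometrically --- strictness is automatic --- while the product decomposition and finiteness over complete discretely valued fields are read off from the total ring of fractions of the reduction $\OO_S/I_S$. If you want to complete your route instead, the missing ingredient is to exhibit the topology of $A_{i,j}$ as induced from an affinoid structure, for instance by presenting $A_{i,j}$ as a topological quotient of the $L_{i,j}$-affinoid algebra $S \widehat{\otimes}_{\Qp} L_{i,j}$ (its image there spans $A_{i,j}$ over $L_{i,j}$), and then comparing with the spectral seminorm; as written, the proof is incomplete at its crucial step.
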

\begin{proof}
Let $T$ be the multiplicative subset of $\OO_S$ consisting of
elements whose images in $\OO_S/I_S$ are not zero divisors.
For any $s \in S$ and $t \in T$, we have $|st| = |s| |t|$, so the norm
on $S$ extends uniquely to the localization $S[T^{-1}]$.
The completion of this localization has the desired form.
\end{proof}
\begin{proposition}\label{prop:faithful}
Let $K$ be a finite extension of $\Qp$,
and let $S$ be an affinoid algebra over $\Qp$.
Then the natural base change functor from \'etale $\m$-modules over
$S \widehat{\otimes}_{\Qp} \bb^{\dagger}_{K}$
to families of $\m$-modules over
$S \widehat{\otimes}_{\Qp} \bb^{\dagger}_{\rig,K}$
is fully faithful. In fact, this holds even without the $\Gamma$-action.
\end{proposition}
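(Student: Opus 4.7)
The plan is to reduce the statement to a question about $\varphi$-invariants via internal Hom, and then to show that an invariant section of the Fréchet completion already lies in the original module, using a Fréchet-seminorm iteration that exploits the integrality of an étale lattice. Since no $\Gamma$-structure is used, I work with bare étale $\varphi$-modules throughout.

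Write $B := S \widehat{\otimes}_{\Qp} \bb^\dagger_K$ and $B_{\rig} := S \widehat{\otimes}_{\Qp} \bb^\dagger_{\rig,K}$. For étale $\varphi$-modules $M_1, M_2$ over $B$, local freeness makes $N := M_1^\vee \otimes_B M_2$ again an étale $\varphi$-module (the tensor product of integral lattices over $A_0 := \OO_S \widehat{\otimes}_{\Zp} \aa^\dagger_K$ is $\varphi$-stable and unit-root), and one has $\on{Hom}_{\varphi,B}(M_1, M_2) = N^{\varphi=1}$, with the analogous identity after base change. Full faithfulness then reduces to showing that the canonical map
\[
N^{\varphi=1} \to (N \otimes_B B_{\rig})^{\varphi=1}
\]
is a bijection for every étale $\varphi$-module $N$ over $B$. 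Injectivity follows from injectivity of $B \hookrightarrow B_{\rig}$, checked on the Laurent-series presentation.

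For surjectivity, take $x \in (N \otimes_B B_{\rig})^{\varphi=1}$. After localizing on $M(S)$, I may assume the étale lattice $N_0 \subset N$ over $A_0$ is free with basis $e_1,\dots,e_d$, on which $\varphi$ acts via $A \in \on{GL}_d(\OO_S \widehat{\otimes}_{\Zp} \aa^{\dagger,s}_K)$ for $s$ large enough that $1/s \leq 1/(p-1)$ (the ``$T^p$-dominant'' regime for $\varphi(T) = (1+T)^p - 1$). Expanding $x = \sum c_i e_i$ with $c_i \in S \widehat{\otimes}_{\Qp} \bb^{\dagger,s}_{\rig,K}$, the identity $\varphi(x) = x$ yields the matrix equation $\vec{c} = A \varphi(\vec{c})$. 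For $\rho \in (0, 1/s]$, let $|\cdot|_\rho$ be the Gauss-type seminorm on $S \widehat{\otimes}_{\Qp} \bb^{\dagger,s}_{\rig,K}$ combining the Banach norm on $S$ with evaluation of Laurent coefficients at $v_p(T) = \rho$. Two estimates drive the argument: (a) for $\rho \leq 1/(ps)$, since $v_p(\varphi(T)) = p\rho$ when $\rho \leq 1/(p-1)$, the change of variables $T' = \varphi(T)$ gives $|\varphi(y)|_\rho = |y|_{p\rho}$; (b) integrality of $A$, together with the overconvergence estimate $v_p(a_i) \geq |i|(p-1)/(ps)$ on negative-index coefficients in $\aa^{\dagger,s}_K$, gives $|A|_\rho \leq 1$ for $\rho \leq (p-1)/(ps)$. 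Combining, for $\rho \leq 1/(ps)$ one obtains $|\vec{c}|_\rho \leq |A|_\rho |\vec{c}|_{p\rho} \leq |\vec{c}|_{p\rho}$. Iterating until $p^n\rho$ falls in the compact interval $[1/(ps), 1/s]$ yields
\[
|\vec{c}|_\rho \leq |\vec{c}|_{p^n\rho} \leq \sup_{\rho' \in [1/(ps), 1/s]} |\vec{c}|_{\rho'} =: M < \infty,
\]
with finiteness of $M$ by continuity of the Fréchet seminorms on $\vec{c}$. Hence $|\vec{c}|_\rho \leq M$ uniformly for $\rho \in (0, 1/s]$, which is exactly the condition that $\vec{c}$ has uniformly bounded coefficients in $S$, so $\vec{c} \in (S \widehat{\otimes}_{\Qp} \bb^{\dagger,s}_K)^d \subset B^d$. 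Thus $x \in N$ locally, and the local solutions glue by the injectivity already proved.

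The principal technical obstacle is making the two norm estimates rigorous in the relative setting: the estimate $|\varphi(y)|_\rho = |y|_{p\rho}$ must account for the full expansion $(1+T)^p - 1 = pT + \binom{p}{2}T^2 + \cdots + T^p$, which is $T^p$-dominated precisely on $(0, 1/(p-1)]$; and $|A|_\rho \leq 1$ requires compatible normalizations on the completed tensor product $\OO_S \widehat{\otimes}_{\Zp} \aa^{\dagger,s}_K$. Once these are in hand, the remainder reduces to standard affinoid localization together with the characterization of the bounded subring $\bb^{\dagger,s}_K \subset \bb^{\dagger,s}_{\rig,K}$ as the elements with bounded Laurent coefficients.
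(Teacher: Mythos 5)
Your overall strategy (reduce to showing $N^{\varphi=1}=(N\otimes_B B_{\rig})^{\varphi=1}$ for the internal Hom $N$, then run a Frobenius--Gauss-norm iteration using an integral lattice) is the classical proof over a point, and most of the intermediate estimates are fine; but the final step has a genuine gap in the relative setting. You conclude from the uniform bound $|\vec{c}|_\rho\leq M$ for all $\rho\in(0,1/s]$ that $\vec{c}\in(S\widehat{\otimes}_{\Qp}\bb^{\dagger,s}_K)^d$, i.e.\ you identify the completed tensor product $S\widehat{\otimes}_{\Qp}\bb^{\dagger,s}_K$ with the ring of Laurent series over $S$ having uniformly bounded Gauss seminorms. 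That identification is correct when $S$ is a complete discretely valued field, but it fails for a general affinoid $S$: the ring $\bb^{\dagger,s}_K\cong\calE^s_{K_0'}$ is an $\ell^\infty$-type (not $c_0$-type) Banach space in the nonnegative powers of $T$, so its completed tensor product with $S$ is \emph{strictly smaller} than the bounded-coefficient ring. Concretely, for $S=\Qp\langle u\rangle$ the series $\sum_{i\geq 0}u^iT^i$ has all Gauss seminorms bounded by $1$ on the annulus but does not lie in $S\widehat{\otimes}_{\Qp}\calE^s$ (its coefficients cannot be approximated uniformly in $i$ by a finite-dimensional subspace of $S$, as any element of the completed tensor product requires). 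This is exactly the kind of topological incompatibility the paper warns about when it notes that $S\widehat{\otimes}_{\Qp}\bb^\dagger_K$ need not even embed into $S\widehat{\otimes}_{\Qp}\bb_K$. So boundedness of $|\vec{c}|_\rho$ does not by itself put $\vec{c}$ in $B^d$, and fullness is not yet established. (The functional equation $\vec{c}=A\varphi(\vec{c})$ may well exclude such pathologies, but that requires an additional argument you have not supplied.)

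For comparison, the paper sidesteps this issue entirely by reducing to the pointwise case: it embeds $S$ strictly into a finite product of finite algebras over complete discretely valued fields (Lemma~6.4), where the analogue of your final step is the \emph{definition} of the bounded Robba ring, and then invokes the known full faithfulness over such fields (Kedlaya, \emph{Slope filtrations revisited}, Theorem~6.3.3). If you want to keep your direct approach, you must either (i) strengthen the conclusion of the iteration to produce an actual convergent expression for $\vec{c}$ as an element of the completed tensor product (e.g.\ by exploiting $\vec{c}=A\varphi(A)\cdots\varphi^{n-1}(A)\varphi^n(\vec{c})$ and integrality more quantitatively), or (ii) run your estimate fiberwise over each factor of such a product of fields and then descend, which essentially recovers the paper's argument.
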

\begin{proof}
Note that if we replace $S$ by a complete discretely valued field $L$,
we may deduce the analogous claim by \cite[Theorem~6.3.3]{K05b}
after translating notations.
(We must note that families of $\m$-modules over
$\bb^{\dagger}_{\rig,K}$
are finite free over $\bb^{\dagger}_{\rig,K}$
by Remark~\ref{rem:freeness}.)
In fact, if we replace $S$ by a finite algebra over $L$, we may
make the same deduction by restricting scalars to $L$.
We may thus deduce the original claim by embedding $S$ into a product
of finite algebras over complete discretely valued fields
using Lemma~\ref{lem:affinoid-inclusion}.
\end{proof}
\begin{corollary}\label{cor:em-unique}
Let $K$ be a finite extension of $\Qp$,
and let $S$ be an affinoid algebra over $\Qp$.
Then an \'etale model of a family of $\m$-modules over
$S \widehat{\otimes}_{\Qp} \bb^{\dagger}_{\rig,K}$
is unique if it exists.
\end{corollary}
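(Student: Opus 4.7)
The plan is to derive the corollary as an essentially formal consequence of Proposition 6.5. Suppose $M_1$ and $M_2$ are two \'etale models over $S \widehat{\otimes}_{\Qp} \bb^{\dagger}_K$ of the same family $M$ over $S \widehat{\otimes}_{\Qp} \bb^{\dagger}_{\rig,K}$. By definition of ``\'etale model'', this means we are given isomorphisms
\[
\alpha_i : M_i \otimes_{S \widehat{\otimes}_{\Qp} \bb^{\dagger}_K} (S \widehat{\otimes}_{\Qp} \bb^{\dagger}_{\rig,K}) \xrightarrow{\sim} M
\]
of families of $\m$-modules for $i=1,2$. Composing gives a $\m$-equivariant isomorphism $\alpha_2^{-1} \circ \alpha_1$ between the base changes of $M_1$ and $M_2$ to $S \widehat{\otimes}_{\Qp} \bb^{\dagger}_{\rig,K}$.

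Next, I would apply Proposition 6.5 to this isomorphism: the base change functor from \'etale $\m$-modules over $S \widehat{\otimes}_{\Qp} \bb^{\dagger}_K$ to families over $S \widehat{\otimes}_{\Qp} \bb^{\dagger}_{\rig,K}$ is fully faithful, so $\alpha_2^{-1} \circ \alpha_1$ is the image of a unique isomorphism $f : M_1 \to M_2$ of \'etale $\m$-modules over $S \widehat{\otimes}_{\Qp} \bb^{\dagger}_K$. In particular, $M_1 \cong M_2$, and the isomorphism is canonical (determined by the identifications $\alpha_1, \alpha_2$ with $M$). This yields both existence and uniqueness of the isomorphism between any two \'etale models, which is the statement of the corollary.

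There is essentially no obstacle here beyond correctly unwinding the definition of ``\'etale model'' and invoking Proposition 6.5; the substantive content (full faithfulness) has already been established. The one thing I would double-check is that fully faithful in Proposition 6.5 is meant in the $\m$-equivariant sense (as it is, per the proof's reduction to \cite[Theorem~6.3.3]{K05b}), so that the resulting $f: M_1 \to M_2$ automatically respects both $\varphi$ and $\Gamma$; but the remark in Proposition 6.5 that full faithfulness holds even without the $\Gamma$-action only strengthens what is needed.
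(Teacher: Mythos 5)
Your argument is correct and is exactly the intended one: the paper states the corollary as an immediate consequence of Proposition~6.5 (with no separate proof written out), and unwinding the definition of \'etale model and applying full faithfulness to the composite isomorphism of base changes is precisely that deduction. Your closing observation that the full faithfulness already holds without the $\Gamma$-action, so the lifted isomorphism automatically respects both $\varphi$ and $\Gamma$, is also consistent with the paper's formulation.
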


\begin{definition}
Let $S$ be an affinoid algebra over $\Qp$.
Let $V_S$ be a locally free $S$-linear representation.
We define $\D^\dagger_K(V_S)$ as in Definition~\ref{def:fm-fg}, then put
\[
\D^\dagger_{\rig,K}(V_S) = \D^\dagger_K(V_S) \otimes_{S
\widehat{\otimes}_{\Qp} \bb^{\dagger}_{K}} (S
\widehat{\otimes}_{\Qp} \bb^{\dagger}_{\rig, K}).
\]
This is an \'etale $\m$-module over $S
\widehat{\otimes}_{\Qp} \bb^{\dagger}_{\rig, K}$,
from which we may recover $V_S$ by taking
\[
V_S=(\D^\dagger_{\rig,K}(V_S) \otimes_{S\widehat{\otimes}_{\Qp} \bb^\dagger_{\rig,K}}
(S\widehat{\otimes}_{\Qp} \bt^\dagger_{\rig}))^{\varphi=1}.
\]
\end{definition}
We may now obtain Theorem~\ref{thm:A} of the introduction by
combining Theorem~\ref{thm:fg-convert-coefficient} (via Definition~\ref{def:fm-fg}) with Proposition~\ref{prop:faithful}.

\section{Local \'etaleness}

We now turn to Theorem~\ref{thm:B} of the introduction. Given what we already
have proven, this can be obtained by invoking some results
from \cite{R08}. For the convenience of the reader, we recall
these results in detail.

\begin{lemma}\label{lem:f-ngbd}
Let $K$ be a finite extension of $\Qp$,
and let $S$ be an affinoid algebra over $K$.
For any $x\in M(S)$ and $\lambda>0$, there exists an affinoid
subdomain $M(B)$ of $M(S)$ containing $x$ such that if $f\in S$ vanishes at
$x$, then $|f(y)|\leq\lambda|f|_S$ for any $y\in M(B)$.
\end{lemma}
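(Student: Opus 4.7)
The plan is to take $M(B)$ to be a Weierstrass subdomain of $M(S)$ cut out by forcing a fixed generating set of the maximal ideal $\mathfrak{m}_x$ at $x$ to be uniformly small, with the threshold chosen in terms of $\lambda$ and a constant coming from Banach's open mapping theorem.

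First, since $S$ is affinoid, hence noetherian, $\mathfrak{m}_x$ is finitely generated; fix generators $g_1,\dots,g_n$. As the kernel of the continuous evaluation $S\to S/\mathfrak{m}_x$, the ideal $\mathfrak{m}_x$ is closed in $S$, so it is a Banach space under the restricted norm. The map $\pi\colon S^n\to \mathfrak{m}_x$ defined by $(a_1,\dots,a_n)\mapsto \sum_i a_i g_i$ is then a continuous surjection of Banach spaces, and Banach's open mapping theorem yields a constant $C>0$ such that every $f\in \mathfrak{m}_x$ admits a representation $f=\sum_i a_i g_i$ with $\max_i |a_i|_S \leq C |f|_S$.

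Next, fix $\epsilon>0$ with $\epsilon C\leq \lambda$, and set $B = S\langle g_1/\epsilon,\dots,g_n/\epsilon\rangle$. This is an affinoid subdomain of $M(S)$, and since $g_i(x)=0$ for each $i$, we have $x\in M(B)$. For any $f\in \mathfrak{m}_x$ and any $y\in M(B)$, writing $f=\sum_i a_i g_i$ as above gives
$$|f(y)| \;\leq\; \max_i |a_i(y)|\,|g_i(y)| \;\leq\; \epsilon \max_i |a_i|_S \;\leq\; \epsilon C |f|_S \;\leq\; \lambda|f|_S,$$
where we used that pointwise evaluation is bounded by the supremum seminorm (and that the latter is in turn bounded by any ambient Banach norm).

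The main conceptual step is the open mapping application, which promotes the set-theoretic statement ``every $f\in \mathfrak{m}_x$ is a combination of the $g_i$'' to the quantitative version with uniformly bounded coefficients; after that, the norm bookkeeping is routine. In the reduced setting of interest in this paper, the supremum seminorm and any Banach norm on $S$ are equivalent, so the distinction between these two interpretations of $|f|_S$ only affects the constant $C$ and does not alter the argument.
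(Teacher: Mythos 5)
Your proof is correct, but it takes a genuinely different route from the paper's. You work intrinsically on $S$: you fix generators $g_1,\dots,g_n$ of $\mathfrak{m}_x$ (finitely many since $S$ is noetherian), use that $\mathfrak{m}_x$ is closed (this is \cite[Proposition~3.7.2/2]{BGR84}, which the paper itself invokes elsewhere) together with the open mapping theorem to get a uniform bound $\max_i|a_i|_S\leq C|f|_S$ on coefficients in a representation $f=\sum_i a_ig_i$, and then cut out the Weierstrass domain $\{|g_i|\leq\epsilon\}$ with $\epsilon C\leq\lambda$. The paper instead reduces to the Tate algebra $T_n$, where the statement is immediate for a small polydisc around the origin, and transports the result to $S$ via a surjection $\alpha\colon T_n\twoheadrightarrow S$ chosen (via Nakayama) so that $\OO_{T_n}$ surjects onto $\OO_S$; this integrality is what lets one lift a normalized $f$ to an $f'\in\OO_{T_n}$ vanishing at $\alpha(x)$ without losing control of norms. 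Your approach trades that explicit model for a soft functional-analytic input, and as a bonus exhibits $M(B)$ concretely as a Weierstrass domain attached to $\mathfrak{m}_x$; the paper's approach avoids the open mapping theorem at this point and stays entirely within explicit power-series estimates. Two cosmetic points: to have $S\langle g_1/\epsilon,\dots,g_n/\epsilon\rangle$ literally defined you should shrink $\epsilon$ into $\sqrt{|K^\times|}$ (harmless, since only the inequality $\epsilon C\leq\lambda$ matters), and the passage from ``$\pi$ is open'' to the quantitative constant $C$ costs an extra factor of $|p|^{-1}$ because the value group is discrete, which is of course absorbed into $C$.
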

\begin{proof}
We first prove the lemma for $S=T_n=K\langle x_1,\dots,x_n\rangle$, the
$n$-dimensional Tate algebra over $K$. It is harmless to enlarge $K$,
so we may suppose without loss of generality
that $x$ is the origin $x_1=\cdots=x_n=0$. Choosing a rational
number $\lambda'<\lambda$, the affinoid subdomain
$\{(x_1,\dots,x_n) \in M(S): |x_1|\leq\lambda', \dots, |x_n|\leq\lambda'\}$
satisfies the required property.

For general $S$, the reduction $\overline{S}=\OO_S/\mathfrak{m}_K\OO_S$ is a
finite type scheme over the residue field $k$ of $K$.
For $n$ sufficiently large, we take a
surjective $k$-algebra homomorphism
$\overline{\alpha}:k[\overline{x_1},\dots,\overline{x_n}]\twoheadrightarrow
\overline{S}$. We lift
$\overline{\alpha}$ to a $K$-affinoid algebra homomorphism
$\alpha:K\langle x_1,\dots,x_n\rangle\ra S$ by mapping $x_i$ to a
lift of $\overline{\alpha}(\overline{x_i})$ in $\OO_S$. Then it
follows from Nakayama's lemma that $\alpha$ maps $\OO_K\langle
x_1,\dots,x_n\rangle$ onto $\OO_S$. Let $\alpha$ also denote
the induced map from $M(S)$ to $M(K\langle
x_1,\dots,x_n\rangle)$. By the case of $K\langle
x_1,\dots,x_n\rangle$, we can find an affinoid neighborhood $M(B)$
of $\alpha(x)$ satisfying the required property for $\lambda/p$. Now for any nonzero $f\in
S$ vanishing at $x$, we choose $c\in \Q$ such that $|c|\leq|f|_S\leq p|c|$, yielding $pf/c\in\OO_S$. Pick $f'\in\OO_K\langle
x_1,\dots,x_n\rangle$ such that $\alpha(f')=pf/c$. Then
$f'(\alpha(x))=(pf/c)(x)=0$ implies that $|f'(y)|\leq(\lambda/p)|f'|_{T_n}\leq \lambda/p$ for
any $y\in M(B)$. Then for any $y\in \alpha^{-1}(M(B))$, we have
$|pf(y)|/|c|=|f'(\alpha(y))|\leq\lambda/p$, yielding $|f(y)|\leq\lambda|c|\leq\lambda|f|_S$. Hence $\alpha^{-1}(M(B))$
is an affinoid neighborhood of $x$ satisfying the property we need.
\end{proof}

\begin{definition}
For $S$ a commutative Banach algebra over $\Qp$ and
$I$ a subinterval of $\mathbb{R}$,
let $\r_S^I$ be the ring of Laurent series
over $S$ in the variable $T$ convergent for $v(T)^{-1} \in I$.
Let $v_S$ be the valuation on $S$, and for $s \in I$ and $x = \sum_i
x_i T^i \in \r_S^I$ put
\[
w_s(x) = \inf_i \{ i + s v_S(x_i)\}.
\]
Put $\r_S^s = \r_S^{[s, +\infty)}$,
which we may identify with
the completed tensor product $S \widehat{\otimes}_{\Qp} \r_{\Qp}^s$
for the Fr\'echet topology on the right, and put $\r_S=\cup_{s>0}\r_S^s$.
Let $\r_S^{\int,s}$ be the subring of $\r_S^s$ consisting of series
with coefficients in $\OO_S$.
\end{definition}

The following lemma is based on \cite[Lemma~6.1.1]{K05b}.
\begin{lemma}\label{lem:rig-to-dagger}
Let $K$ be a finite extension of $\Qp$,
and let $S$ be an affinoid algebra over $K$.
Pick $s_0 > 0$.
Let $\varphi: \calR^{s_0/p}_S \to \calR^{s_0}_S$
be a map of the form $\sum_i c_i T^i \mapsto \sum_i \phi_S(c_i) W^i$,
where $\phi_S: S \to S$ is an isometry and $W \in
\calR^{s_0}_S$ satisfies
$w_{s_0}(W -T^p) > w_{s_0}(T^p)$.
For some $s \geq s_0$,
let $D$ be an
invertible $n\times n$ matrix over $\r_S^{[s,s]}$, and put
$h=-w_s(D)-w_s(D^{-1})$; it is clear that $h\geq0$. Let $F$ be an $n\times n$ matrix over
$\r_S^{[s,s]}$ such that $w_s(FD^{-1}-I_n)\geq c+h/(p-1)$ for a
positive number $c$. Then for any positive integer $k$ satisfying
$2(p-1)sk\leq c$, there exists an invertible $n\times n$ matrix
$U$ over $\r_S^{[s/p,s]}$ such that
$U^{-1}F\varphi(U)D^{-1}-I_n$ has
entries in $p^k\r_S^{\int, s}$ and
$w_s(U^{-1}F\varphi(U)D^{-1}-I_n)\geq c+h/(p-1)$.
\end{lemma}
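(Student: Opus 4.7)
The plan is to construct $U$ by a Newton-type successive approximation. Starting from $U_0 = I_n$, I inductively set $U_{j+1} = U_j(I_n + V_j)$ for carefully chosen correction matrices $V_j$, maintaining the invariants
\[
E_j := U_j^{-1} F \varphi(U_j) D^{-1} - I_n \in p^j \r_S^{\int, s}, \qquad w_s(E_j) \geq c + h/(p-1),
\]
with the base case $j=0$ given by the hypothesis. Expanding the gauge transformation and writing $\sigma(Y) := D\varphi(Y)D^{-1}$, one computes
\[
E_{j+1} = E_j - V_j + \sigma(V_j) + R_j,
\]
where $R_j$ consists of products of $V_j, E_j, \sigma(V_j)$ each containing at least two factors. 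If $V_j \in p^j \r_S^{\int, s}$, then $R_j \in p^{2j}\r_S^{\int, s}$, so advancing the $p$-adic precision by one step reduces to choosing $V_j = p^j X_j$ satisfying the mod-$p$ equation
\[
X_j - \sigma(X_j) \equiv H_j \pmod p, \qquad H_j := p^{-j} E_j \in \r_S^{\int, s}.
\]

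To solve this mod-$p$ equation, I use the formal Neumann series $X_j := \sum_{i \geq 0} \sigma^i(H_j)$. The key estimate is $w_s(\varphi(Y)) = p \cdot w_{s/p}(Y)$, which follows from $\phi_S$ being an isometry together with the hypothesis $w_{s_0}(W - T^p) > w_{s_0}(T^p)$; combining with $w_s(DAD^{-1}) \geq -h + w_s(A)$ yields $w_s(\sigma(Y)) \geq p\, w_{s/p}(Y) - h$, and iterating gives
\[
w_s(\sigma^i(H_j)) \geq p^i \bigl(w_s(H_j) - h/(p-1)\bigr) + h/(p-1).
\]
Since $w_s(H_j) = w_s(E_j) - js \geq c + h/(p-1) - js$ and the hypothesis $2(p-1)sk \leq c$ ensures $js < c$ for all $j < k$, the quantity $w_s(H_j) - h/(p-1) \geq c - js$ is strictly positive, so the series converges mod $p$ to an integral $X_j$ with $w_s(X_j) \geq c + h/(p-1) - js$.

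The invariant $E_{j+1} \in p^{j+1}\r_S^{\int,s}$ then follows by construction of $X_j$, and $w_s(E_{j+1}) \geq c + h/(p-1)$ follows by bounding the four terms in the decomposition of $E_{j+1}$: the factor of $2$ in the hypothesis $2(p-1)sk \leq c$ is precisely what is needed to absorb both the cumulative $w_s$-loss of $js$ across the $k$ steps and the loss of $h/(p-1)$ coming from the geometric-series tail in the Neumann series. Setting $U := U_k$ gives the desired matrix, invertibility being automatic since each $I_n + V_j$ is $p$-adically close to the identity. The main obstacle will be a geometric mismatch: $\sigma$ requires its input to live on the thick annulus $\r_S^{[s/p, s]}$ in order to apply $\varphi$, but $E_j$ and $\sigma(V_j)$ a priori live only on the circle $\r_S^{[s,s]}$; one must regard each iterate $\sigma^i(H_j)$ as analytic on a thick-enough annulus via the integrality of $H_j$, and carefully track the interplay of the three rings $\r_S^{[s/p, s]}$, $\r_S^{[s,s]}$, and $\r_S^{\int, s}$ throughout the iteration to ensure the final correction $U_k$ indeed lies in $\r_S^{[s/p, s]}$.
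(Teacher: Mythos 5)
Your reduction of each step to the congruence $X_j-\sigma(X_j)\equiv H_j \pmod p$, solved by the Neumann series $\sum_{i\ge 0}\sigma^i(H_j)$, is exactly where the argument fails, and the ``geometric mismatch'' you defer to the last sentence is not a bookkeeping issue that integrality can repair. The map $\varphi$ sends $\r_S^{[s/p,s]}$ into $\r_S^{[s,ps]}$, i.e.\ it strictly rescales the annulus of definition, and $D$, $D^{-1}$ are only given over the circle $\r_S^{[s,s]}$; hence $\sigma(Y)=D\varphi(Y)D^{-1}$ takes values only in matrices over $\r_S^{[s,s]}$ and cannot be applied a second time, so $\sigma^i$ is undefined for $i\ge 2$. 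Worse, even $\sigma(H_j)$ is undefined: membership of $H_j$ in $\r_S^{\int,s}$ gives convergence on $v(T)^{-1}\in[s,+\infty)$, i.e.\ toward the outer boundary, and says nothing about convergence on $[s/p,s]$ (a series $\sum_{m<0}a_mT^m$ with $v_S(a_m)\approx-\mu m$ for some $1/s<\mu<p/s$ lies in $\r_S^{\int,s}$ but diverges at radius $s/p$). Your estimate $w_s(\sigma^i(H_j))\ge p^i\bigl(w_s(H_j)-h/(p-1)\bigr)+h/(p-1)$ also silently substitutes $w_s$ for the $w_{s/p}$ that each application of $\varphi$ actually requires.

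The paper's proof avoids inverting $1-\sigma$ entirely. Writing $F_lD^{-1}-I_n=\sum_mV_mT^m$, it takes as correction $X_l=\sum_{v_S(V_m)<k}V_mT^m$, the part of the error whose coefficients have $p$-adic valuation $<k$. Precisely because those valuations are bounded by $k$, this particular element does extend to $\r_S^{[s/p,s]}$ with the quantitative loss $w_t(X_l)\ge w_s(X_l)-(s-t)k$; this is what the hypothesis $2(p-1)sk\le c$ is calibrated against. The new error is $(I_n+X_l)^{-1}(F_lD^{-1}-I_n-X_l)$ plus a term involving $D\varphi(X_l)D^{-1}$: the first summand has all coefficients of valuation $\ge k$ by construction, while the second has its $w_s$ amplified by the factor $p$ supplied by $\varphi$, which beats the loss $h$ from conjugation by $D$. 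Progress is measured not by the $p$-adic precision of the error but by $c_l=\inf_{i\le k-1}\{v_{i,s}(F_lD^{-1}-I_n)\}-h/(p-1)$, which grows at least linearly; the iteration converges only in the Fr\'echet limit (not after $k$ steps), at which point the low-valuation part of the error has been pushed to infinite $w_s$ and the limit error lies in $p^k\r_S^{\int,s}$. To rescue a Newton-type scheme of your kind you would need $D$ and the error to be defined over successively thicker annuli, which the hypotheses do not provide.
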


\begin{proof}
For
$i\in\mathbb{R}$, $s>0$, $f=\sum^{+\infty}_{j=-\infty}a_j
T^j\in\r_S$, we set $v_i(f)=\min\{j:v_S(a_j)\leq i\}$ and
$v_{i,s}(f)=v_i(f)+si$. It is clear that $v_{i,s}(f)\geq w_s(f)$. (In case $S$ is a field, these
are similar to the quantities
$v_i^{\rm{naive}}$, $v_{i,r}^{\rm{naive}}$ in \cite[p. 458]{K05b},
albeit with a slightly different normalization.)

We define a sequence of invertible matrices $U_0$, $U_1$, $\dots$ over
$\r_S^{[s/p,s]}$ and a sequence of matrices $F_0$, $F_1$, $\dots$ over
$\r_S^{[s,s]}$ as follows. Set $U_0=I_n$. Given $U_l$, put
$F_l=U_l^{-1}F\varphi(U_l)$. Suppose
$F_lD^{-1}-I_n=\displaystyle{\sum_{m=-\infty}^{\infty}}V_mT^m$ where
the $V_m$'s are $n\times n$ matrices over $S$. Let
$X_l=\displaystyle{\sum_{v_S(V_m)< k}}V_mT^m$, and put
$U_{l+1}=U_l(I_n+X_l)$. Set
\[
c_l=\inf_{i\leq k-1}\{{v_{i,s}(F_lD^{-1}-I_n)}-h/(p-1)\}.
\]
We now prove by induction that $c_l\geq\frac{l+1}{2}c$,
$w_{s}(F_lD^{-1}-I_n)\geq c+h/(p-1)$ and $U_l$ is invertible over
$\r_S^{[s/p,s]}$ for any $l\geq0$. This is obvious for $l=0$. Suppose
that the claim is true for some $l\geq 0$.  Then for any $t\in[s/p,s]$,
since $c_l\geq\frac{l+1}{2}c\geq (p-1)sk$, we have
\begin{center}
$w_t(X_l)\geq w_s(X_l)-(s-t)k\geq
(c_l+h/(p-1))-(s-t)k>0$.
\end{center}
Hence $U_{l+1}$ is also invertible over $\r_S^{[s/p,s]}$.
Furthermore, we have
\begin{eqnarray*}
w_s(D\varphi(X_l)D^{-1})&\geq& w_s(D)+w_s(\varphi(X_l))+w_s(D^{-1})\\
&=&p w_{s/p}(X_l)-h\\
&>&p(c_l+h/(p-1))-h-(p-1)sk\\
&=&pc_l+h/(p-1)-(p-1)sk\\
&\geq& c_l+\frac{1}{2}c+h/(p-1)+(\frac{1}{2}c-(p-1)sk)\\
&\geq& \frac{(l+2)}{2}c+h/(p-1).
\end{eqnarray*}
Since $c_l\geq\frac{l+1}{2}c$ by inductive assumption. Note that
\begin{eqnarray*}
F_{l+1}D^{-1}-I_n&=&(I_n+X_l)^{-1}F_lD^{-1}(I_n+D\varphi(X_l) D^{-1})-I_n\\
&=&((I_n+X_l)^{-1}F_lD^{-1}-I_n)+(I_n+X_l)^{-1}(F_lD^{-1})D\varphi(X_l)D^{-1}.
\end{eqnarray*}
Since $w_s(F_lD^{-1})\geq0$ and $w_s((I_n+X_l)^{-1})\geq0$, we have $w_s((I_n+X_l)^{-1}(F_lD^{-1})D\varphi(X_l)D^{-1})\geq \frac{(l+2)}{2}c+h/(p-1)$. Write
\begin{eqnarray*}
(I_n+X_l)^{-1}F_lD^{-1}-I_n&=&(I_n+X_l)^{-1}(F_lD^{-1}-I_n-X_l)\\
&=&\displaystyle{\sum_{j=0}^{\infty}}(-X_l)^{j}(F_lD^{-1}-I_n-X_l).
\end{eqnarray*}
For $j\geq1$, we have
\begin{center}
$w_s((-X_l)^{j}(F_lD^{-1}-I_n-X_l))\geq c+c_l+2h/(p-1)>
\frac{l+2}{2}c+h/(p-1)$.
\end{center}
By the definition of $X_l$, we also have $v_i(F_lD^{-1}-I_n-X_l)=\infty$
for $i< k$ and $w_{s}(F_lD^{-1}-I_n-X_l)\geq c+h/(p-1)$.
Putting these together, we get that
\[
v_{i,s}(F_{l+1}D^{-1}-I_n)\geq \frac{l+2}{2}c+h/(p-1)
\]
for any $i< k$, i.e., $c_{l+1}\geq \frac{l+2}{2}c$, and that
$w_{s}(F_{l+1}D^{-1}-I_n)\geq c+h/(p-1)$. The induction step is
finished.

Now since $w_t(X_l)\geq c_l+h/(p-1) - (p-1)ps/k$ for $t\in[s/p,s]$, and
$c_l\rightarrow\infty$ as $l\rightarrow\infty$, the sequence ${U_l}$
converges to a limit $U$, which is an invertible $n\times n$ matrix
over $\r_S^{[s/p,s]}$ satisfying $w_s(U^{-1}F\varphi(U)D^{-1}-I_n)\geq
c+h/(p-1)$. Furthermore, we have
\begin{center}
$v_{m,s}(U^{-1}F\varphi(U)D^{-1}-I_n)=\displaystyle{\lim_{l\rightarrow\infty}}
v_{m,s}(U_l^{-1}F\varphi(U_l)D^{-1}-I_n)
=\displaystyle{\lim_{l\rightarrow\infty}}
v_{m,s}(F_{l+1}D^{-1}-I_n)=\infty$,
\end{center}
for any $m< k$. Therefore $U^{-1}F\varphi(U)D^{-1}-I_n$ has
entries in $p^k\r_S^{\int, s}$.
\end{proof}

\begin{theorem}\label{thm:rigngbd}
Let $S$ be an affinoid algebra over $\Qp$, and let $M_S$ be a family of $\m$-modules over $S \widehat{\otimes}_{\Qp}
\bb^\dagger_{\rig,K}$, such that for some $x \in M(S)$ whose residue field is contained in $S$,
the fibre
$M_x$ of $M_S$ over $x$ is \'etale. Then there
exists an affinoid neighborhood $M(B)$ of $x$ and a finite extension of
$L$ of $K$ such that the base extension $M_B$ of $M_S$ to $B
\widehat{\otimes}_{\mathbb{Q}_p} \mathbf{B}^{\dagger}_{\mathrm{rig},L}$
has an \'etale model in which the entries of the matrix of $\varphi-1$ have positive $p$-adic valuation.
\end{theorem}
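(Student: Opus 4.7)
The plan is to exploit the étaleness of $M_x$ to build an approximate integral frame on a neighborhood of $x$, and then to feed it into Lemma 7.3, whose internal Buchberger-style iteration converts the approximation into an exact étale model. Because the residual representation attached to $M_x$ may fail to be trivial over $K$, I would first pass to a finite Galois extension $L/K$ trivializing it (as in Proposition 2.4), work over $L$ throughout, and recover an étale model over $K$ at the end via the Galois descent already built into the Berger--Colmez construction; I suppress this step notationally. After shrinking $M(S)$ to an affinoid neighborhood of $x$ on which $M_S$ becomes free over $S \widehat{\otimes}_{\Qp} \bdag{,s_0}_{\rig,K}$ for some $s_0 \gg 0$, fix a basis and let $F$ denote the matrix of $\varphi$.

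By étaleness of $M_x$, together with Theorem 2.3 and Proposition 2.4, there is a basis of the étale model of $M_x$ over $\kappa(x) \widehat{\otimes}_{\Qp} \bdag{,s}_K$ (for some $s \geq s_0$) on which $\varphi$ acts by an invertible matrix $D \in \mathrm{GL}_n(\OO_{\kappa(x)} \widehat{\otimes}_{\Zp} \adag{,s}_K)$ with $D \equiv I_n \pmod{p}$. Using $\kappa(x) \subset S$, I would lift this basis to a basis of $M_S$ over an affinoid neighborhood of $x$; let $F'$ denote the matrix of $\varphi$ in the new basis, so $F'|_x = D$ and every entry of $F' - D$ vanishes at $x$. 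Invoking Lemma 7.1 gives, for any prescribed $c > 0$, an affinoid neighborhood $M(B)$ of $x$ on which
\[
w_s(F' D^{-1} - I_n) \geq c + \frac{h}{p-1},
\]
where $h = -w_s(D) - w_s(D^{-1}) \geq 0$. This is exactly the hypothesis of Lemma 7.3.

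Choosing $c$ large enough to accommodate $k = 1$ (i.e. $c \geq 2(p-1)s$) and applying Lemma 7.3 over $B$ produces $U \in \mathrm{GL}_n(\r_B^{[s/p,s]})$ such that $\tilde F := U^{-1} F' \varphi(U)$ satisfies $\tilde F D^{-1} \in I_n + p\,\r_B^{\int,s}$. In particular $\tilde F$ has entries in $\OO_B \widehat{\otimes}_{\Zp} \adag{,s}_K$, and the $(\OO_B \widehat{\otimes}_{\Zp} \adag{,s}_K)$-span $N_B$ of the new basis is a free $\varphi$-stable lattice inside $M_B$ whose base change recovers $M_B$; that is, $N_B$ is a $\varphi$-model. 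For any $\gamma \in \Gamma_K$, the translate $\gamma N_B$ is another such $\varphi$-model, so by the uniqueness statement for étale models (Corollary 6.6, together with Proposition 6.5 applied at the $\varphi$-level) we conclude $\gamma N_B = N_B$; hence $N_B$ is the desired étale $\m$-model. Finally, in this basis $\tilde F - I_n = (\tilde F - D) + (D - I_n)$: the first summand has entries in $p\,\r_B^{\int,s}$ (from the lemma's output) and the second has entries of positive $p$-adic valuation (by our choice of $D$), so $\tilde F - I_n$ has entries of positive $p$-adic valuation, as required.

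The main technical hurdle is the ring-bookkeeping: the change of basis $U$ a priori lives only on the closed annulus $\r_B^{[s/p,s]}$, yet $\tilde F$ must land in the bounded-integer ring $\OO_B \widehat{\otimes}_{\Zp} \adag{,s}_K$ for the construction to yield an honest étale model. This is precisely what Lemma 7.3 secures via its simultaneous control of $w_s$ and of the $p$-adic integer structure; Lemma 7.1 is the indispensable mechanism that converts the pointwise étaleness at $x$ into the uniform smallness bound on a neighborhood needed to feed Lemma 7.3. A secondary subtlety is the lifting of the étale basis from $x$ to a neighborhood, which uses only that $M_S$ is locally free of constant rank and standard Nakayama-type arguments.
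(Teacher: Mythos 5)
Your strategy coincides with the paper's in its essentials: lift the integral Frobenius matrix $D$ of the \'etale fibre via $\kappa(x)\subset S$, use Lemma~7.1 to shrink the base until the Frobenius matrix of the family is $w_s$-close to $D$, apply Lemma~7.3 to correct the basis, and obtain $\Gamma$-stability of the resulting lattice from the uniqueness of \'etale ($\varphi$-)models. But there are two real gaps. The first is your opening reduction: you cannot simply shrink $M(S)$ so that $M_S$ becomes free over $S\widehat{\otimes}_{\Qp}\bb^{\dagger,s_0}_{\rig,K}$. Remark~6.2 of the paper points out that $S$-local freeness of vector bundles over a half-open annulus times an affinoid is not known; L\"utkebohmert's theorem applies only to closed annuli, which is why the paper takes a basis $e_S$ only of the restriction of $V_S$ to $S\widehat{\otimes}_{\Qp}\calR^{[s/p,s]}_{K'_0}$. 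This is not cosmetic: since the matrix $U$ from Lemma~7.3 is likewise defined only over $\calR^{[s/p,s]}$, the corrected basis is a priori a basis only on that closed annulus, and one must still prove that it generates $V_S$ on the entire half-open annulus. The paper does this by repeatedly transporting the basis through the (integrally invertible) $\varphi$-matrix from $[s/p,s]$ to $[s,ps]$, then to $[ps,p^2s]$, and so on. You correctly flag this ``ring-bookkeeping'' as the main hurdle, but Lemma~7.3 does not ``secure'' it: your assertion that the integral span $N_B$ of the new basis has base change equal to $M_B$ is precisely the missing propagation argument.

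The second gap is the round trip through $L$. The paper never leaves $K$: it takes $D$ to be an arbitrary invertible matrix over $\calR^{\int,s}$ coming from an \'etale lattice of $M_x$ and runs Lemmas~7.1 and~7.3 directly over $K$. You instead pass to $L/K$ to trivialize the residual representation so as to take $D\equiv I_n\pmod p$ --- which is indeed what makes your final positivity claim for $\varphi-1$ work --- but you then ``suppress notationally'' the descent back to $K$. That descent is not harmless: an \'etale lattice over $\OO_B\widehat{\otimes}_{\Zp}\aa^\dagger_L$ whose $\varphi$-matrix is $\equiv I_n$ need not descend to one over $\OO_B\widehat{\otimes}_{\Zp}\aa^\dagger_K$ with the same property (already a rank-one unramified twist over $K=\Qp$ admits no basis with $\varphi-1$ of positive valuation), and the Berger--Colmez descent you invoke is set up for representations, not for the lattice you have just built by hand. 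So, as written, your argument establishes the conclusion over $L$ rather than over $K$. (To be fair, the positivity clause is the most delicate point of the theorem, and the paper's own treatment of the factor $D$ in the final change of basis is terse; but your proposed repair, as it stands, does not close the loop.)
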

\begin{proof}
Because Proposition~\ref{prop:faithful} does not require the $\Gamma$-action,
it suffices to construct an \'etale model just for the $\varphi$-action.
Choose an isomorphism $\bb^{\dagger,s_0}_{\rig,K} \cong \calR^{s_0}_{K'_0}$
for some $s_0>0$, via which $\varphi$ induces a map
from $\calR^{s_0/p}_{K'_0}$ to $\calR^{s_0}_{K'_0}$ satisfying
$w_{s_0}(\varphi(T) - T^p) > w_{s_0}(T^p)$.
Then choose $s\geq s_0$ such that $M_S$ is represented by a
vector bundle $V_S$ over $S \widehat{\otimes}_{\Qp} \calR^{s/p}_{K'_0}$
equipped with an isomorphism $\varphi^* V_S \to V_S$
of vector bundles over $S \widehat{\otimes}_{\Qp} \calR^{s}_{K'_0}$.

By hypothesis, $M_x$ is \'etale. After increasing $s$, we may thus
assume that $M_x$ admits a basis $e_x$ on which $\varphi$ acts
via an invertible matrix over $\calR_{S/\mathfrak{m}_x}^{\int,s}$.
Lift this matrix to a matrix $D$
over $\calR_S^{\int,s}$, using the inclusion $S/\mathfrak{m}_x \hookrightarrow S$
which was assumed to exist. By enlarging $K$, we can ensure that
$D-1$ has positive $p$-adic valuation (by first doing so modulo $\mathfrak{m}_x$).

By results of L\"utkebohmert \cite[Satz 1, 2]{L77},
the restriction of $V_S$ to $S \widehat{\otimes}_{\Qp} \calR^{[s/p,s]}_{K'_0}$
is $S$-locally free. By replacing $M(S)$ with an affinoid subdomain containing
$x$, we may reduce to the case where this restriction admits a basis
$e_S$. Let $A$ be the matrix via which $\varphi$ acts on this basis;
it has entries in $S \widehat{\otimes}_{\Qp} \calR^{[s,s]}_{K'_0}$.
Let $V$ be a matrix over
$S \widehat{\otimes}_{\Qp} \calR^{[s/p,s]}_{K'_0}$
lifting (again using the inclusion $S/\mathfrak{m}_x \hookrightarrow S$)
the change-of-basis matrix from the mod-$\mathfrak{m}_x$ reduction of $e_S$
to $e_x$.

By Lemma~\ref{lem:f-ngbd}, we can shrink $S$ so as to make $D$ invertible over
$\calR_S^{\int,s}$. We can also force $V$ to become invertible,
and we may make $V^{-1} A \varphi(V) - D$ as small as desired.
We may thus put ourselves in position to apply Lemma~\ref{lem:rig-to-dagger}
with $F = V^{-1} A \varphi(V)$, to produce an invertible
$n \times n$ matrix $U$ over $S \widehat{\otimes}_{\Qp} \calR^{[s/p,s]}_{K'_0}$
such that $W = U^{-1} F \varphi(U) D^{-1} - I_n$ has entries in
$p \OO_S \widehat{\otimes}_{\Zp} \calR^{\int,s}_{K'_0}$
and $w_s(W) > 0$.

Changing basis from $e_S$ via the matrix $VU$ gives another basis
$e'_S$ of $V_S$ over $S \widehat{\otimes}_{\Qp} \calR^{[s/p,s]}_{K'_0}$,
on which $\varphi$ acts via the matrix $(W + I_n)D$.
We may change basis $e_S'$ using $(W+I_n)D$ to get a new basis of
$V_S$ over $S \widehat{\otimes}_{\Qp} \calR^{[s,ps]}_{K'_0}$;
since $(W+I_n)D$ is invertible over
$\OO_S \widehat{\otimes}_{\Zp} \calR^{\int,s}_{K'_0}$,
the basis $e'_S$ also generates $V_S$ over
$S \widehat{\otimes}_{\Qp} \calR^{[s,ps]}_{K'_0}$.
Repeating the argument, we deduce that $e'_S$ is actually a basis of
$V_S$ generating an \'etale model. This proves the claim.
\end{proof}
Combining Theorem~\ref{thm:fg-convert} with Theorem~\ref{thm:rigngbd} yields Theorem~\ref{thm:B}. Note that before applying
Theorem~\ref{thm:rigngbd}, we must first extend scalars from $S$ to $S \otimes_{\Qp} L$ for
$L = S/\mathfrak{m}_x$; we then use Galois descent for the action of
$\mathrm{Gal}(L/\Qp)$ to recover a statement about $S$ itself.

\begin{remark}\label{rem:c-example}
Unfortunately, there is no natural extension of Theorem~\ref{thm:rigngbd} to
the Berkovich analytic space $\mathcal{M}(S)$ associated to $S$.
For instance, take $K = \Qp$, $S = \Qp\langle y \rangle$,
and let $M_S$ be free of rank 2 with the action of
$\varphi$ given by the matrix
\[
\begin{pmatrix} 0 & 1 \\ 1 & y/p \end{pmatrix}
\]
(in which $T$ does not appear).
The locus of $x \in M(S)$
where $M_x$ is \'etale is precisely the disc $|y| \leq |p|$,
which does not correspond to an open subset of $\mathcal{M}$.

On the other hand, it may still be the case that $M_S$ is \'etale
if and only if $M_x$ is \'etale (in an appropriate sense) for each
$x \in \mathcal{M}(S)$.
\end{remark}

\begin{remark}
It should be possible to generalize Berger's construction in \cite{LB06} to families of filtered $(\phi, N)$-modules.
With such a generalization,
one would deduce immediately from Theorem~\ref{thm:rigngbd}
that any family of weakly admissible $(\phi, N)$-modules over an affinoid base
(with trivial $\phi$-action on the base)
arises from a Galois representation in a neighborhood of any given
rigid analytic point. However, in view of Remark~\ref{rem:c-example},
we cannot make the  corresponding assertion for Berkovich points.
\end{remark}

\begin{remark}
The families of $\m$-modules considered here are ``arithmetic'' in the sense
that $\varphi$ acts trivially on the base $S$. They correspond to
``arithmetic'' families of Galois representations, such as the
$p$-adic families arising in the theory of $p$-adic modular forms.
There is also a theory of ``geometric'' families of $\m$-modules,
in which $\varphi$ acts as a  Frobenius lift on the base $S$.
These correspond to representations of arithmetic fundamental groups
via the work of Faltings, Andreatta, Brinon, Iovita, et al.
In the latter theory, one does expect the \'etale locus to be open, as in
Hartl's work \cite[Theorem~5.2]{H06}.
One also expects that a family of $\m$-modules is globally \'etale
if and only if it is \'etale over each Berkovich point (but not if it is
only \'etale over each rigid point, as shown by the Rapoport-Zink spaces).
We hope to consider this question in subsequent work.
\end{remark}


\begin{thebibliography}{99}
\bibitem{B04}Laurent Berger, {\it An introduction to the theory of
$p$-adic representations}, Geometric aspects of Dwork theory, vol. I,
255--292, de Gruyter, Berlin, 2004.
\bibitem{LB06} Berger L., {\it \'Equations diff\'erentielles
  $p$-adiques et $(\varphi,N)$-modules filtr\'es}, Ast\'erisque 319 (2008), 13--38.
\bibitem{BC07}Laurent Berger, Pierre Colmez, {\it Familles de repr\'esentations de de Rham et monodromie p-adique}, Ast\'erisque 319, 303--337, 2008.
\bibitem{B90}Vladimir Berkovich, {\it Spectral theory and analytic geometry over non-Archimedean
fields}, Mathematical Surveys and Monographs, vol. 33, American
Mathematical Society, Providence, RI, (1990), 169 pp.
\bibitem{BGR84}S. Bosch, U. G\"untzer, and R. Remmert, {\it Non-archimedean
analysis}, Springer-Verlag, Berlin, 1984.
\bibitem{C09} G. Chenevier, {\it Une application des vari\'et\'es de Hecke
des groups unitaires}, preprint available at
\texttt{http://www.math.polytechnique.fr/\~{}chenevier/articles/}.
\bibitem{CC98}Frederic Cherbonnier, Pierre Colmez, {\it Repr\'esentations
  $p$-adiques surconvergentes}, Inv. Math. 133, 581--611, 1998.
\bibitem{C08} Pierre Colmez, {\it Repr\'esentations triangulines de dimension 2}, Ast\'erisque 319, 213--258, 2008.
\bibitem{D01} Jonathan Dee, {\it $\varphi$-$\Gamma$-modules for families of Galois representations}, Journal of Algebra 235 (2001), 636--664.
\bibitem{F91} Jean-Marc Fontaine, {\it Repr\'esentations $p$-adiques des corps locaux I}, The Grothendieck Festschrift II, Progr. Math. 87,
  Birkh\"auser, 1990, 249--309.
\bibitem{H06} U. Hartl, {\it On a conjecture of Rapoport and Zink},
arXiv preprint \texttt{math.NT/0605254}.
\bibitem{K05a} K.S. Kedlaya, {\it Local monodromy of $p$-adic differential
equations: an overview}, Intl. J. Number Theory 1 (2005), 109--154.
\bibitem{K05b} K.S. Kedlaya, {\it Slope
filtrations revisited}, Documenta Mathematica 10 (2005), 447--525.
\bibitem{K08} K.S. Kedlaya, {\it Slope filtrations for relative
Frobenius},  Ast\'erisque 319 (2008), 259--301.
\bibitem{L77} W. L\"utkebohmert, {\it Vektorraumb\"undel \"uber
nichtarchimedischen holomorphen R\"aumen}, Math.\ Z.\ 152 (1977), 127--143.
\bibitem{R08}Ruochuan Liu, {\it Slope Filtrations in Families}, to appear in Journal de l'Institut de Math. de Jussieu.
\bibitem{M03}J.P. May, {\it Munshi's proof of the Nullstellensatz},
Amer. Math. Monthly 110 (2003), 133--140.

\end{thebibliography}
\end{document}